\newcommand{\ds}{\displaystyle}
\newtheorem{thm}{Theorem}[section]
\newtheorem{prop}[thm]{Proposition}
\newtheorem{definition}[thm]{Definition}
\newtheorem{lemma}[thm]{Lemma}
\newcommand{\Flder}{\rightarrow}
\newcommand{\der}{\partial}
\newtheoremstyle{obs}
  {3pt}
  {3pt}
  {}
  {}
  {\bfseries}
  {.}
  {.5em}
  {}
\theoremstyle{obs}
\newtheorem{remark}[thm]{Remark}
\newtheorem{defn}[thm]{Definition}
\newtheorem{ex}[thm]{Example}
\newcommand{\R}{\mathbb{R}}      
\newcommand{\cua}{^{2}}
\newcommand{\bra}{\langle}
\newcommand{\ket}{\rangle}
\newcommand{\lp}{\left(}
\newcommand{\rp}{\right)}
\newcommand{\lc}{\left\{}
\newcommand{\rc}{\right\}}
\def\qed{\ifvmode\removelastskip\fi
{\unskip\nobreak\hfil\penalty50\hbox{}\nobreak\hfil \hbox{\vrule
height1.2ex width1.2ex}\parfillskip=0pt \finalhyphendemerits=0
\par \smallskip}}
\title{Isotropic submanifolds and the inverse problem for mechanical constrained systems}
\author{\textsc{Mar\'ia Barbero-Li\~n\'an}\thanks{mbarbero@math.uc3m.es}\\
\small
Departamento de Matem\'aticas, Universidad Carlos III de Madrid, \\ \small Avenida de la Universidad 30, 28911 Legan\'es, Madrid, Spain
\\ \small and Instituto de Ciencias Matem\'aticas (CSIC-UAM-UC3M-UCM)\\ \and
\textsc{Marta Farr\'e Puiggal\'i \thanks{marta.farre@icmat.es},}
\textsc{David Mart\'{\i}n de Diego}\thanks{david.martin@icmat.es} \\
\small
Instituto de Ciencias Matem\'aticas (CSIC-UAM-UC3M-UCM) \\ \small  C/Nicol\'as
Cabrera 13-15, 28049 Madrid, Spain
}
\begin{document}

\maketitle

\begin{abstract}
The inverse problem of the calculus of variations consists in determining if the solutions of  a given system
of second order differential equations correspond with the solutions of the Euler-Lagrange equations for some regular Lagrangian.
 This problem in the general version remains unsolved. Here, we contribute to it with a novel description
in terms of Lagrangian submanifolds of a symplectic manifold, also valid under some adaptation for the non-autonomous version.
One of the advantages of this new point of view is that we can easily extend our description to the study of the  inverse problem of the calculus of variations for second order systems along submanifolds. In this case, instead of  Lagrangian submanifolds we will use  isotropic submanifolds,
covering both the nonholonomic and holonomic constraints for autonomous and non-autonomous systems as particular examples.  
Moreover, we use symplectic techniques to extend these isotropic submanifolds to Lagrangian ones, 
allowing us to describe the constrained solutions as solutions of a variational problem now without constraints. 
Mechanical examples
 such as the rolling disk are provided to illustrate the main results.

\vspace{3mm}

\textbf{Keywords:}  Inverse problem, Lagrangian and isotropic submanifolds, constrained variational calculus, nonholonomic systems, hamiltonization

\vspace{3mm}

\textbf{2010 Mathematics Subject Classification:} 49N45; 70F25; 58E30; 53D12
\end{abstract}

\newpage
\tableofcontents
\newpage
\section{Introduction}
The inverse problem of the calculus of variations studies when a given  system of second order ordinary differential equations (SODE)
\begin{equation*}
\ddot{q}^i=\Gamma^i(t, q^{j},\dot{q}^{j}), \quad i,j=1,\ldots,n
\end{equation*}
is related to Euler-Lagrange equations
\begin{equation*}
\dfrac{{\rm d} }{{\rm d} t} \, \dfrac{\partial L}{\partial \dot{q}^i}-\dfrac{\partial L}{\partial q^i}=0,
\end{equation*}
for a regular Lagrangian to be determined.
To prove the equivalence of these two systems is the same as to find a non-singular matrix $(g_{ij})$ such that the following system is satisfied
\begin{equation*}
g_{ij} (\ddot{q}^j-\Gamma^j(t, q,\dot{q}))=\dfrac{{\rm d} }{{\rm d} t} \, \dfrac{\partial L}{\partial \dot{q}^i}-\dfrac{\partial L}{\partial q^i}\, .
\end{equation*} 
When such a matrix exists, the system of second order ordinary differential equations is called \textit{variational}. In 1886
Sonin~\cite{Sonin1886} proved that a single second order ordinary differential equation is always variational. This problem was also studied in 1887 by Helmholtz~\cite{Helmholtz1887} for general systems of second order ordinary differential equations in implicit form. 

History has shown this is an extremely difficult problem because only the full solution for at most
two dimensional systems of second order ordinary differential equations is known~\cite{Douglas}. Douglas' solution consists of an exhaustive classification
in different cases using Riquier-Janet theory. Variational and nonvariational SODE's are included in his work. The techniques used by
Douglas turned out to be very difficult to generalize to higher dimension.

Since 1980, the inverse  problem has been considered by many authors ~\cite{81Crampin, 82Henneaux,1990MFLMR,82Sarlet,79Takens}  giving a geometric interpretation of 
Douglas' classification and generalizing some of the results to higher dimensions. 
In particular, a free coordinate characterization of the inverse problem is given in~\cite{94CSMBP}. As a result, it has been proved that
cases I and IIa1 in Douglas~\cite{Douglas} are always variational for arbitrary dimension~\cite{SCM1998} and~\cite{CPST1999}, respectively. Case I was also proven by \cite{AT1992} and \cite{GM1999}  using different approaches. 
Other extensions of the inverse problem include partial differential equations~\cite{Anderson}, field theory~\cite{84Henneaux},  nonholonomic mechanics~\cite{2012Rossi}, driven SODE's~\cite{91IbortMarin}, jet bundles~\cite{2008KP}, etc.

In our paper we will follow a symplectic approach working with Lagrangian submanifolds of symplectic manifolds \cite{Weinstein}
associated to the geometry of the tangent bundle, which is the space where a SODE is geometrically defined.
In terms of the closedness of a suitable 1-form, constructed from the given SODE and a transformation between the tangent 
bundle and its dual, the cotangent bundle, we provide a new characterization of a variational second order differential equation. 
The use of other distinguished submanifolds of symplectic manifolds, isotropic submanifolds, turns out to be suitable 
to characterize the inverse problem for constrained variational calculus. 
Moreover, using a standard construction in symplectic geometry we can extend these isotropic submanifolds to 
Lagrangian ones, allowing us to describe the constrained solutions as solutions of a variational problem now without constraints 
such that the solutions of the new variational problem with initial conditions verifying the constraints are 
precisely real solutions of the original constrained system.   
Our techniques are also related to classical results about the comparison of solutions of nonholonomic systems and 
constrained variational problems (see \cite{93BlochCrouch, 2008BlochFer,2002Cortes} and references therein).

The paper is organized as follows: Section~\ref{Sec:Geometric}
 contains all the basic background on symplectic manifolds and tangent bundle geometry
necessary for this work (see also~\cite{GuiStern,LiMarle,TuH,Tu}). In Section~\ref{Sec:Lagrange}
we introduce some relevant examples of constrained Lagrangian systems: nonholonomic systems and constrained variational systems. 
In Section~\ref{SInverse}
we briefly describe the inverse problem of the calculus of
variations from the
geometric approach given in~\cite{81Crampin}. Then the new geometric characterization of the inverse problem is introduced:
a system of second order differential equations on a manifold $Q$ is variational if it can be associated to a
Lagrangian submanifold of the symplectic manifold $(T^*TQ,\omega_{TQ})$, where $\omega_{T^Q}$ is the natural symplectic structure 
of $T^*TQ$. We relate our results to the so-called Chaplygin hamiltonization for a special type of 
nonholonomic system \cite{2009BlochFerMestdag}. The time-dependent case is also included by using
the notion of Lagrangian submanifold of a Poisson manifold~\cite{Vaisman2}. The problem for constrained variational calculus, 
in particular,
the nonholonomic mechanics, is described in Section~\ref{constraints-section} by linking the notion of being variational to isotropic submanifolds of 
$(T^*TQ,\omega_{TQ})$.
The rolling disk is considered as an example and regular and singular
Lagrangians associated to it are given. Section~\ref{holonomic} focuses on holonomic dynamics where the system evolves on a submanifold
$TN$ of $TQ$. In some cases it is easier to study the problem in the manifold with greater dimension, instead of working on
$TN$ as if there were no constraints. With this geometric approach the typical Lagrangian functions considered when there are constraints on
$Q$ are recovered. The time-dependent case for constrained variational calculus is also characterized by using the notion of isotropic
submanifolds of Poisson manifolds~\cite{Vaisman2} in Section~\ref{Sec:TimeConstrained}. Finally, some future research lines are discussed. 
Appendix~\ref{Appendix} carefully shows in local coordinates the
equivalence between the geometric Helmholtz conditions in~\cite{81Crampin} and the conditions in our paper.

\section{Geometric preliminaries}\label{Sec:Geometric}

In this section we briefly introduce all the definitions and results from differential geometry, in particular symplectic geometry,
that are necessary in the sequel. More details can be found in~\cite{AbMa} and~\cite{LiMarle}. 

In this paper, $TQ$ and $T^*Q$ are the tangent and cotangent bundle of a manifold $Q$, respectively. The set of vector fields on $Q$ is
denoted by $\mathfrak{X}(Q)$ and the set of $k$-forms on $Q$ is denoted by $\Lambda^k(Q)$.

\subsection{Isotropic and Lagrangian submanifolds}\label{Sec:Lagrang}

The two main elements in this work are introduced here: Lagrangian and isotropic submanifolds. The former are the extension to manifolds of the notion of Lagrangian subspaces of
symplectic vector spaces~\cite{Weinstein}.

Let us recall that a symplectic vector space is a pair $(E,\Omega)$ where $E$ is a vector space and $\Omega\colon E\times E \rightarrow \mathbb{R}$ is a skew-symmetric bilinear map of maximal rank. See~\cite{GuiStern,1989LeRo,LiMarle,Weinstein} for more details.

\begin{defn} Let $(E,\Omega)$ be a symplectic vector space and
$F\subset E$ a subspace. The $\Omega$-\textbf{orthogonal complement
of $F$} is the subspace defined by
\begin{equation*}
F^\perp=\{e\in E \; | \; \Omega(e,e')=0 \; \mbox{for all } e'\in
F\}.
\end{equation*}
The subspace $F$ is said to be
\begin{enumerate}
\item \textbf{isotropic} if $F\subseteq F^\perp$, that is,
$\Omega(e,e')=0$ for all $e,e'\in F$.
\item \textbf{Lagrangian} if $F$ is isotropic and has an
isotropic complement, that is, $E=F\oplus F'$, where $F'$ is
isotropic.
\end{enumerate} \label{defn:lagrangianSubspace}
\end{defn}

A well-known characterization of Lagrangian subspaces of finite dimensional symplectic vector spaces is summarized in the following result:

\begin{prop} Let $(E,\Omega)$ be a finite dimensional symplectic vector space and
$F\subset E$ a subspace. The following assertions are
equivalent:
\begin{enumerate}
\item $F$ is Lagrangian,
\item $F=F^\perp$,
\item $F$ is isotropic and ${\rm dim} \, F=\frac{1}{2}{\rm dim}\,
E$.
\end{enumerate}\label{prop:LagrangianSubmanifoldIsotropic}
\end{prop}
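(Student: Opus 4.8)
The plan is to prove the chain of implications (i) $\Rightarrow$ (ii) $\Rightarrow$ (iii) $\Rightarrow$ (i), working entirely with linear algebra in the finite-dimensional symplectic vector space $(E,\Omega)$. The key structural fact I would establish first, as a lemma used throughout, is the dimension formula $\dim F + \dim F^\perp = \dim E$ for any subspace $F\subset E$. This follows because the map $E\to F^*$ sending $e\mapsto \Omega(e,\cdot)|_F$ is surjective (since $\Omega$ is nondegenerate, hence the induced map $E\to E^*$ is an isomorphism, and restriction $E^*\to F^*$ is surjective) with kernel exactly $F^\perp$, so $\dim E = \dim F^\perp + \dim F^* = \dim F^\perp + \dim F$. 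Also note for later the elementary observation that $F\subseteq G$ implies $G^\perp\subseteq F^\perp$, and that $(F^\perp)^\perp = F$ by the dimension formula applied twice together with the obvious inclusion $F\subseteq (F^\perp)^\perp$.

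With the dimension formula in hand the equivalences are short. For (i) $\Rightarrow$ (ii): if $F$ is Lagrangian then $F$ is isotropic, so $F\subseteq F^\perp$; and writing $E = F\oplus F'$ with $F'$ isotropic, I would show $\dim F = \dim F'$ (hence $\dim F = \tfrac12\dim E$ and then $\dim F = \dim F^\perp$ by the dimension formula, giving $F = F^\perp$). The equality $\dim F = \dim F'$ comes from the fact that, $F'$ being isotropic, $F'\subseteq (F')^\perp$, so $\dim F' \le \dim(F')^\perp = \dim E - \dim F'$, i.e. $\dim F' \le \tfrac12\dim E$; symmetrically $\dim F\le\tfrac12\dim E$; and since $\dim F + \dim F' = \dim E$ both inequalities are equalities. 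For (ii) $\Rightarrow$ (iii): from $F = F^\perp$ we get at once that $F$ is isotropic (as $F = F^\perp$ means $\Omega(e,e')=0$ for all $e\in F^\perp = F$ and $e'\in F$), and the dimension formula gives $\dim F + \dim F = \dim E$, i.e. $\dim F = \tfrac12\dim E$.

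The only implication requiring a genuine construction is (iii) $\Rightarrow$ (i): given $F$ isotropic with $\dim F = \tfrac12\dim E$, I must produce an isotropic complement $F'$. First, $F\subseteq F^\perp$ and $\dim F^\perp = \dim E - \dim F = \tfrac12\dim E = \dim F$, so in fact $F = F^\perp$. To build $F'$ I would proceed by induction, or more cleanly use a compatible complex structure / symplectic basis argument: choose any complement $W$ of $F$ in $E$; $W$ need not be isotropic, so one corrects it. The cleanest route is to invoke the existence of a symplectic basis $(e_1,\dots,e_m,f_1,\dots,f_m)$ of $E$ adapted so that $e_1,\dots,e_m$ span $F$ — this is the standard symplectic Gram–Schmidt process, which can be run so as to extend a basis of a given isotropic subspace to a full symplectic basis — and then set $F' = \mathrm{span}(f_1,\dots,f_m)$, which is visibly isotropic and complementary to $F$. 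I expect this step to be the main obstacle, not because it is deep but because it requires either citing/proving the symplectic basis extension theorem or carrying out the inductive correction argument (replace $W$ by $\{w - \tfrac12 A(w) : w\in W\}$-type adjustments where $A$ encodes $\Omega|_{W\times W}$ pulled back through the pairing with $F$); I would cite the symplectic linear algebra in \cite{GuiStern} or \cite{LiMarle} to keep the proof self-contained yet brief, since the paper has already pointed the reader there.
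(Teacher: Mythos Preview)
Your proof is correct. Note, however, that the paper does not actually supply a proof of this proposition: it is stated as a ``well-known characterization'' and left without argument, the reader being implicitly referred to standard references such as \cite{GuiStern,LiMarle,Weinstein}. So there is no proof in the paper to compare against; your chain (i) $\Rightarrow$ (ii) $\Rightarrow$ (iii) $\Rightarrow$ (i) via the dimension formula and symplectic basis extension is exactly the standard textbook route and would serve perfectly well if the paper had chosen to include a proof.
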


As a consequence, we can characterize a Lagrangian subspace $F$ of $(E,\Omega)$ by checking if it has half the dimension of $E$ and 
if the restriction of $\Omega$ to $F$ vanishes, that is, $\Omega_{|F}=0$.

A symplectic manifold
$(M,\omega)$ is defined by a differentiable manifold $M$ and a non-degenerate closed 2-form $\omega$ on $M$. Therefore, for each 
$x\in M$, $(T_xM, \omega_x)$ is a symplectic vector space. A symplectic manifold has even dimension.

The notion of Lagrangian subspace can be transferred to submanifolds by requiring that the tangent space of the
submanifold is a Lagrangian subspace for every point in the submanifold of a symplectic manifold.

\begin{defn} Let $(M,\omega)$ be a symplectic manifold, ${\rm
i}\colon N\rightarrow M$ be an immersion and ${\rm T}_{{\rm i}(x)}\colon T_xN \rightarrow T_{{\rm i}(x)} M$ be the tangent
map of ${\rm i}$. It is said that $N$ is an
\textbf{isotropic immersed submanifold} of $(M,\omega)$ if $({\rm
T}_x{\rm i})({\rm T}_xN) \subset {\rm T}_{{\rm i}(x)}M$ is an isotropic subspace
for each $x\in N$.  A submanifold $N\subset M$ is called
\textbf{Lagrangian} if it is isotropic and there is an isotropic
subbundle $P\subset TM|_{N}$ such that $TM|_{N}=TN\oplus
P$.\label{defn:lagrangianSubm}
\end{defn}

Note that ${\rm i}\colon N\rightarrow M$ is isotropic if and only if
${\rm i}^*\omega=0$, that is, $\omega({\rm T}_x{\rm i} (v_x), {\rm T}_x{\rm i} (u_x))=0$ for every $u_x,v_x\in T_xN$ and for every $x\in N$.

%

The canonical model of symplectic manifold is the cotangent bundle $T^*Q$ of an arbitrary manifold $Q$ which is the dual bundle of 
$\tau_Q: TQ\rightarrow Q$. Denote by $\pi_Q\colon T^*Q \rightarrow Q$ the canonical projection and define a canonical 1-form $\theta_Q$ on $T^* Q$ by
\begin{equation}\label{eq:ThetaQ}
 \left(\theta_Q\right)_{\alpha_q}(X_{\alpha_q})=\langle \alpha_q, {\rm T}_{\alpha_q} \pi_Q(X_{\alpha_q})\rangle,
\end{equation}
where $X_{\alpha_q}\in T_{\alpha_q}T^*Q$, $\alpha_q\in T^*Q$ and $q\in Q$. If we consider
bundle coordinates $(q^i,p_i)$ on $T^* Q$ such that $\pi_Q(q^i,p_i)=q^i$, then
\begin{equation*}\theta_Q=p_i  {\rm d}q^i\, .
 \end{equation*}
 The 2-form $\omega_Q=-{\rm d}\theta_Q$ is a symplectic form on $T^*Q$ with local expression
\begin{equation*}
\omega_Q={\rm d}q^i \wedge {\rm d}p_i.
\end{equation*}
The Darboux theorem states that this is the local model for an arbitrary symplectic manifold $(M,\omega)$. In other words, there always
exist local
coordinates $(q^i,p_i)$ in a neighbourhood of each point in $M$ such that $\omega={\rm d}q^i \wedge {\rm d}p_i$.

Note that the canonical 1-form $\theta_Q$ verifies that $\gamma^*(\theta_Q)=\gamma$ for an arbitrary 1-form $\gamma$ on $Q$. Hence
$\gamma^*(\omega_Q)=-{\rm d}\gamma$.

A relevant example of a Lagrangian submanifold of the cotangent bundle is the following one.

\begin{prop}[\cite{LiMarle}] Let $\gamma$ be a 1-form on $Q$ and ${\mathcal L}=\hbox{Im } \gamma\subset T^*Q$.  The submanifold ${\mathcal L}$ of $T^*Q$ is Lagrangian if and only if
$\gamma$ is closed. \label{prop:LagrangianClosedForm}
\end{prop}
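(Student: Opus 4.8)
The plan is to compute the pullback $\gamma^{*}\omega_{Q}$ and use the characterization of Lagrangian submanifolds established above. Recall from the remark preceding the statement that the tautological $1$-form satisfies $\gamma^{*}\theta_{Q}=\gamma$ for any $1$-form $\gamma$ on $Q$, and hence $\gamma^{*}\omega_{Q}=\gamma^{*}(-{\rm d}\theta_{Q})=-{\rm d}(\gamma^{*}\theta_{Q})=-{\rm d}\gamma$. Since $\gamma\colon Q\to T^{*}Q$ is a section of $\pi_{Q}$, it is an embedding onto ${\mathcal L}=\hbox{Im }\gamma$, and ${\rm T}_{q}\gamma$ is injective with image ${\rm T}_{\gamma(q)}{\mathcal L}$ for every $q\in Q$. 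Therefore the condition that the inclusion of ${\mathcal L}$ into $T^{*}Q$ pulls back $\omega_{Q}$ to zero — i.e. that ${\mathcal L}$ is isotropic, by the remark following Definition~\ref{defn:lagrangianSubm} — is equivalent, via the diffeomorphism $\gamma\colon Q\to{\mathcal L}$, to $\gamma^{*}\omega_{Q}=0$, that is, to ${\rm d}\gamma=0$.

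It remains to promote ``isotropic'' to ``Lagrangian''. First I would note the dimension count: $\dim{\mathcal L}=\dim Q=\tfrac12\dim T^{*}Q$. Then, using Proposition~\ref{prop:LagrangianSubmanifoldIsotropic}, at each point $\gamma(q)\in{\mathcal L}$ the subspace ${\rm T}_{\gamma(q)}{\mathcal L}$ of the symplectic vector space $(T_{\gamma(q)}T^{*}Q,(\omega_{Q})_{\gamma(q)})$ is isotropic of half dimension, hence Lagrangian, so it admits an isotropic complement. To obtain a global isotropic complementary subbundle $P\subset TT^{*}Q|_{{\mathcal L}}$ as required by Definition~\ref{defn:lagrangianSubm}, one can take the vertical subbundle: along ${\mathcal L}$, the vectors tangent to the fibres of $\pi_{Q}$, i.e. $P_{\gamma(q)}=\ker{\rm T}_{\gamma(q)}\pi_{Q}$. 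This is manifestly a subbundle, it is isotropic for $\omega_{Q}={\rm d}q^{i}\wedge{\rm d}p_{i}$ (two vertical vectors are annihilated by $\omega_{Q}$, as is clear in Darboux coordinates), and since $\pi_{Q}\circ\gamma={\rm id}_{Q}$ we have ${\rm T}\pi_{Q}\circ{\rm T}\gamma={\rm id}$, so ${\rm T}_{\gamma(q)}{\mathcal L}\cap P_{\gamma(q)}=0$ and a dimension count gives $TT^{*}Q|_{{\mathcal L}}={\rm T}{\mathcal L}\oplus P$. This establishes the Lagrangian property in both directions.

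The only genuine content is the identity $\gamma^{*}\theta_{Q}=\gamma$, which has already been recorded in the excerpt, so there is no real obstacle here; the mild care required is purely bookkeeping — keeping straight the three maps $\gamma\colon Q\to T^{*}Q$, ${\rm i}\colon{\mathcal L}\hookrightarrow T^{*}Q$, and the diffeomorphism $Q\to{\mathcal L}$ induced by $\gamma$ — and making sure the complementary subbundle $P$ is chosen globally and canonically rather than only pointwise. One could equally argue entirely in Darboux coordinates: writing $\gamma=\gamma_{i}(q)\,{\rm d}q^{i}$, the submanifold ${\mathcal L}$ is $\{p_{i}=\gamma_{i}(q)\}$, and $\omega_{Q}|_{{\mathcal L}}={\rm d}q^{i}\wedge{\rm d}\gamma_{i}=\tfrac12\bigl(\partial_{j}\gamma_{i}-\partial_{i}\gamma_{j}\bigr)\,{\rm d}q^{j}\wedge{\rm d}q^{i}$, which vanishes identically precisely when ${\rm d}\gamma=0$; combined with the half-dimension count this reproves the statement, and I would present the coordinate-free version as the main line with this computation as a sanity check.
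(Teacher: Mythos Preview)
Your proof is correct and follows essentially the same approach as the paper: the paper's argument is the single line ``the result follows because $\dim{\mathcal L}=\dim Q$ and $\gamma^{*}(\omega_{Q})=-{\rm d}\gamma$'', which is exactly your core computation. Your version is more careful in that you explicitly exhibit the vertical subbundle as the isotropic complement required by Definition~\ref{defn:lagrangianSubm}, whereas the paper tacitly invokes Proposition~\ref{prop:LagrangianSubmanifoldIsotropic} pointwise without addressing the subbundle condition.
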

The result follows because $\dim {\mathcal L}=\dim Q$ and $\gamma^*(\omega_Q)=-{\rm d}\gamma$.

A useful extension of the previous construction is the following one

\begin{prop}[\cite{GuiStern}]\label{Prop:TildeS}
Let $i:N \longrightarrow TQ$ be an immersion. For each Lagrangian submanifold $S\subset T^{*}N$ we can define a Lagrangian submanifold $\tilde{S}\subset T^{*}TQ$ by $\tilde{S}=\left\{ \mu\in T^{*}TQ: i^{*}\mu\in S \right\}$.
\end{prop}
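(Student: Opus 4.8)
The plan is to realise $\tilde S$ as the preimage of $S$ under the canonical restriction map and then to verify the two requirements of Definition~\ref{defn:lagrangianSubm}. First I would make the restriction map precise: for $\mu\in T^{*}_{i(x)}TQ$ set $i^{*}\mu:=({\rm T}_{x}i)^{*}\mu\in T^{*}_{x}N$; collecting these yields a vector bundle morphism $i^{*}\colon i^{*}(T^{*}TQ)\to T^{*}N$ over ${\rm id}_{N}$ which is fibrewise surjective (as ${\rm T}_{x}i$ is injective), with kernel over $x$ the annihilator of ${\rm Im}\,({\rm T}_{x}i)$, of dimension $2n-k$, where $2n=\dim TQ$ and $k=\dim N$. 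Since $i^{*}$ is then a surjective submersion and $S$ a submanifold, $\tilde S=(i^{*})^{-1}(S)$ is an (immersed) submanifold of $T^{*}TQ$ of dimension $\dim S+(2n-k)=k+(2n-k)=2n=\tfrac12\dim T^{*}TQ$; when $i$ is not an embedding one argues locally, using that immersions are locally embeddings, and regards $\tilde S$ as the image of the immersion $i^{*}(T^{*}TQ)\to T^{*}TQ$.

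Next I would check isotropy, which is a local matter. Choose coordinates $(u^{a},w^{\alpha})$ on $TQ$, $a=1,\dots,k$, $\alpha=1,\dots,2n-k$, adapted to $i$ so that $i(N)=\{w=0\}$, and let $(u^{a},w^{\alpha},p_{a},p_{\alpha})$ be the induced bundle coordinates on $T^{*}TQ$, so that $\omega_{TQ}={\rm d}u^{a}\wedge{\rm d}p_{a}+{\rm d}w^{\alpha}\wedge{\rm d}p_{\alpha}$ while $(u^{a},p_{a})$ are canonical coordinates on $T^{*}N$ with $\omega_{N}={\rm d}u^{a}\wedge{\rm d}p_{a}$. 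A point of $T^{*}TQ$ over $i(N)$ is $\mu=(u,0,p_{a},p_{\alpha})$ with $i^{*}\mu=(u,p_{a})$, so $\tilde S=\{(u,0,p_{a},p_{\alpha}):(u,p_{a})\in S\}$. On $\tilde S$ one has ${\rm d}w^{\alpha}=0$, so ${\rm d}w^{\alpha}\wedge{\rm d}p_{\alpha}$ vanishes there, while ${\rm d}u^{a}\wedge{\rm d}p_{a}$ restricts to the pullback of $\omega_{N}$ along $\tilde S\to S\hookrightarrow T^{*}N$, hence vanishes since $S$ is Lagrangian; thus $\omega_{TQ}|_{\tilde S}=0$. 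Being isotropic of dimension $\tfrac12\dim T^{*}TQ$, each $T_{\mu}\tilde S$ is a Lagrangian subspace by Proposition~\ref{prop:LagrangianSubmanifoldIsotropic}, and the subbundle spanned by the $\partial/\partial w^{\alpha}$ together with the image in $T(T^{*}TQ)$ of a Lagrangian complement of $TS$ inside $T(T^{*}N)$ gives an isotropic complement of $T\tilde S$ in $T(T^{*}TQ)|_{\tilde S}$, so $\tilde S$ is Lagrangian in the sense of Definition~\ref{defn:lagrangianSubm}.

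The only genuinely delicate point is globalisation — that $\tilde S$ is honestly an immersed submanifold and that the isotropic complement can be chosen as a \emph{smooth} subbundle rather than merely pointwise — but both are routine: the former because $i^{*}$ is a surjective submersion and $i$ an immersion, the latter because any half-dimensional isotropic subbundle of a symplectic vector bundle admits a complementary Lagrangian subbundle (for instance via a compatible almost complex structure). Conceptually, the coordinate computation above is just the local picture of the fact that $(T^{*}N,\omega_{N})$ is the symplectic reduction of $(T^{*}TQ,\omega_{TQ})$ along the coisotropic submanifold $\pi_{TQ}^{-1}(i(N))$, whose characteristic leaves are exactly the fibres of $i^{*}$, combined with the classical fact that the preimage of a Lagrangian submanifold of a reduced space is Lagrangian in the ambient manifold; one could equally take that as the proof proper and keep the coordinates as a verification.
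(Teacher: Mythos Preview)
Your argument is correct and is the standard one: dimension count via the surjective submersion $i^{*}$, isotropy in adapted coordinates, and the observation that half-dimensional isotropic implies Lagrangian. Note, however, that the paper does not supply its own proof of this proposition; it is simply quoted from \cite{GuiStern}, so there is nothing to compare against beyond remarking that your coordinate computation and the coisotropic-reduction interpretation you sketch are exactly the argument one finds in that reference.
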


In the above proposition, if $N$ is a submanifold and $S=\mbox{Im}(df)$ for some $f:N\longrightarrow\mathbb{R}$, then we recover the following result:
\begin{thm}[\cite{TuH},\cite{Tu}]\label{tulchi}
  \label{thm:tulczyjew}
  Let $Q$ be a smooth manifold, $\tau_Q:TQ\rightarrow  Q$ its tangent bundle projection, $ N\subset Q $ a submanifold, and $ f
  \colon N \rightarrow \mathbb{R} $.  Then
  \begin{multline*}
    \Sigma _f = \bigl\{ p \in T ^\ast Q \mid \pi _Q (p) \in N \text{
        and } \left\langle p, v \right\rangle = \left\langle
        \mathrm{d} f , v \right\rangle \\
      \text{ for all } v \in T N \subset T Q \text{ such that } \tau
      _Q(v) = \pi _Q (p) \bigr\}
  \end{multline*}
  is a Lagrangian submanifold of $ T ^\ast Q $.
\end{thm}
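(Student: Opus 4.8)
The plan is to recognize $\Sigma_f$ as a special case of the construction in Proposition~\ref{Prop:TildeS}, so that the result follows without any new computation. First I would take the inclusion $i\colon N\hookrightarrow Q$ as the immersion in Proposition~\ref{Prop:TildeS}, but with $Q$ playing the role that $TQ$ plays there; that is, I apply the proposition with the ambient manifold being $Q$ itself rather than a tangent bundle (the statement of Proposition~\ref{Prop:TildeS} is really about an arbitrary immersion into an arbitrary manifold, the notation $TQ$ there being incidental). Then I would set $S=\mathrm{Im}(\mathrm{d}f)\subset T^*N$, which is a Lagrangian submanifold of $T^*N$ by Proposition~\ref{prop:LagrangianClosedForm}, since $\mathrm{d}f$ is an exact, hence closed, $1$-form on $N$.

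With these choices Proposition~\ref{Prop:TildeS} produces the Lagrangian submanifold
\begin{equation*}
\tilde S=\bigl\{\,p\in T^*Q \ :\ i^*p\in \mathrm{Im}(\mathrm{d}f)\,\bigr\}\subset T^*Q.
\end{equation*}
The remaining step is to check that $\tilde S=\Sigma_f$ as sets. For $p\in T^*Q$, the pullback $i^*p$ is defined only when $\pi_Q(p)\in N$, and then $i^*p\in T^*_{\pi_Q(p)}N$ is the covector $v\mapsto \langle p,v\rangle$ for $v\in T_{\pi_Q(p)}N$. The condition $i^*p\in\mathrm{Im}(\mathrm{d}f)$ says precisely that this covector equals $\mathrm{d}f_{\pi_Q(p)}$, i.e.\ $\langle p,v\rangle=\langle \mathrm{d}f,v\rangle$ for all $v\in T_{\pi_Q(p)}N$, which is exactly the defining condition of $\Sigma_f$ (the constraint $\tau_Q(v)=\pi_Q(p)$ in the statement is just the requirement that $v$ be tangent to $N$ at the base point of $p$). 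Hence the two sets coincide, and $\Sigma_f$ inherits the Lagrangian property from $\tilde S$.

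The only real content is therefore the translation of the set-theoretic description, and the main point to be careful about is the identification of $i^*p$ with the restriction covector and the matching of the base-point conditions; everything substantive is already contained in Propositions~\ref{prop:LagrangianClosedForm} and~\ref{Prop:TildeS}. If one instead wanted a self-contained argument, the fallback would be to verify directly in adapted coordinates $(q^a,q^\alpha)$ for $Q$ with $N=\{q^\alpha=0\}$ that $\Sigma_f=\{(q^a,0,p_a,p_\alpha)\mid p_a=\partial f/\partial q^a\}$ has dimension $\dim Q$ and that $\omega_Q$ restricted to it vanishes, invoking Proposition~\ref{prop:LagrangianSubmanifoldIsotropic}; but the clean route is simply to cite Proposition~\ref{Prop:TildeS}.
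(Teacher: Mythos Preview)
Your proposal is correct and is exactly the route the paper takes: the sentence immediately preceding Theorem~\ref{thm:tulczyjew} says that one recovers it from Proposition~\ref{Prop:TildeS} by taking $N$ a submanifold and $S=\mathrm{Im}(df)$, which is precisely your argument (including the observation that the ambient manifold in Proposition~\ref{Prop:TildeS} need not literally be a tangent bundle).
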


Given  a symplectic manifold $(M, \omega)$, $\dim M=2n$, it is well-known that its tangent bundle $TM$ is equipped with a
symplectic structure denoted by $\mathrm{d}_T\omega$, where ${\rm d}_T \omega$ denotes the tangent lift of $\omega$ to $TM$. If we
take Darboux coordinates $(q^i,p_i)$ on $M$, that is, $\omega=\mathrm{d}q^i\wedge \mathrm{d}p_i$, then
$\mathrm{d}_T\omega=\mathrm{d}\dot{q}^i\wedge \mathrm{d}p_i+\mathrm{d}q^i\wedge \mathrm{d}\dot{p}_i$, where
$(q^i, p_i, \dot{q}^i, \dot{p}_i)$ are the induced coordinates on $TM$. We will denote the bundle coordinates on $T^*M$ by
$(q^i, p_i, a_i, b^i)$, then $\omega_{M}=\mathrm{d}q^i\wedge \mathrm{d}a_i+\mathrm{d}p_i\wedge \mathrm{d}b^i$.
If we denote by $\flat_{\omega}: TM\to T^*M$ the isomorphism defined by $\omega$, that is, $\flat_{\omega}(v)={\rm i}_{v}\,\omega$, then
we have $\flat_{\omega}(q^i, p_i, \dot{q}^i, \dot{p}_i)=(q^i, p_i, -\dot{p}_i, \dot{q}^i)$. 
This isomorphim plays an important role on the description of the dynamics of Lagrangian and Hamiltonian systems as summarized in 
Section~\ref{Sec:Tulczy}
(more details  can be found in~\cite{TuH}).

Given a function $H: M\to \R$, and its associated Hamiltonian vector field $X_H$, that is, ${\rm i}_{X_H}\omega=\mathrm{d}H$,
then the image of  $X_H$, $\hbox{Im}(X_H)$, is a Lagrangian submanifold of  $(TM, \mathrm{d}_T\omega)$.

The following construction can be found in \cite{Vaisman} and will be useful in Section~\ref{SInverse}. 
Assume we have a submanifold $N$ of a symplectic manifold  $(M, \omega)$ such
    that for a neighborhood $U_{p}$ of a point $p$ in $M$ we can write
    $$
    U_{p}\cap N=\left\{x\in M \; |\; \phi_{1}(x)=0,\ldots,\phi_{k}(x)=0\right\}.
    $$
    If we have an isotropic submanifold $N_{0}\subset N$ with  $p\in N_{0}$, $\dim(N_{0})=\frac{\dim(N)-k}{2}$ and
    the Hamiltonian vector fields $X_{\phi_{1}},\ldots,X_{\phi_{k}}$ of
    $\phi_{1},\ldots,\phi_{k}$ satisfy that
    \begin{itemize}
    \item $\exists\ \epsilon >0$ such that the flows of $X_{\phi_{i}}$ are
    defined for all $|t|<\epsilon$,
    \item $X_{\phi_{i}}(p)\not\in T_{p}N_{0}$, for all $i=1,\ldots,k$
    and $p\in N_{0}$,
		\item $X_{\phi_{i}}(p)$ are linearly independent for all $p\in N_{0}$,
    \end{itemize}
    then we can extend it to a Lagrangian submanifold transporting $N_{0}$
    along the flows of the Hamiltonian vector fields
    $X_{\phi_{1}},\ldots,X_{\phi_{k}}$.

We will illustrate the construction  for the case $k=1$ and rename $\phi_1$ by $\phi$.
Since $X_{\phi}$ is transverse to $N_0$, there exists an open interval $I$ about $0$ in $\R$ such that $\hbox{exp }(tX_{\phi}(\tilde{p}))$ is defined
for all $t\in I$ and $\tilde{p}\in N_0\cap U_p$. Therefore the map
\[
\begin{array}{rrcl}
j:& N_0\times I&\longrightarrow&M\\
  &(\tilde{p}, t)&\longmapsto&\hbox{exp }(tX_{\phi}(\tilde{p}))
  \end{array}
  \]
allows us to realize locally $N_0\times I$ as a submanifold $Z$ of $M$ whose tangent space is
\[
T_{\hbox{exp }(tX_{\phi}(\tilde{p}))}Z= (\hbox{exp }(tX_{\phi}))_*(T_{\tilde{p}}N_0)\oplus \hbox{span}\left\{ X_{\phi}(\hbox{exp }(tX_{\phi}(\tilde{p})))\right\},
\]
where $(\hbox{exp }(tX_{\phi}))_*$ is the pushforward of $\hbox{exp }(tX_{\phi})$. Obviously $\dim Z=\dim N_0+1$ and $Z$ is also 
isotropic because, first, for any two vectors in $(\hbox{exp }(tX_{\phi}))_*(T_{\tilde{p}}N_0)$ we have that
\[
\omega((\hbox{exp }(tX_{\phi}))_*v_1, (\hbox{exp }(tX_{\phi}))_*v_2)=((\hbox{exp }(tX_{\phi}))^*\omega)(v_1, v_2)=\omega(v_1, v_2)=0
\]
since $(\hbox{exp }(tX_{\phi}))_*$ is a symplectomorphism and $v_1, v_2\in T_{\tilde{p}}N_0$.

Second, it must be checked that the 2-form $\omega$ also vanishes for a vector in  $(\hbox{exp }(tX_{\phi}))_*(T_{\tilde{p}}N_0)$ and one in $X_{\phi}(\hbox{exp }(tX_{\phi}(\tilde{p})))$. Note that 
\[
\omega((\hbox{exp }(tX_{\phi}))_*v, X_{\phi}(\hbox{exp }(tX_{\phi}(\tilde{p}))))=d\phi(\tilde{p})(v)=0,
\]
because $\phi$ vanishes on $N_0$ and $v\in T_{\tilde{p}}N_0$.

\subsection{Second order differential equations}
Consider the tangent bundle $\tau_Q: TQ\rightarrow Q$ where $(q^i, \dot{q}^i)$ are canonical coordinates on $TQ$ and $(q^i)$ on $Q$. In $TQ$ we can define the following geometric objects: the \emph{Liouville} or \emph{dilation vector field} $\Delta\in {\mathfrak X}(TQ)$ and a type $(1,1)$ tensor field $S$ called the \emph{vertical endomorphism}. In canonical coordinates
\[
\Delta=\dot{q}^i\frac{\partial}{\partial q^i}  \quad \hbox{and} \quad S=dq^i\otimes \frac{\partial}{\partial \dot q^i}.
\]
A \emph{SODE} (second order differential equation) $\Gamma$ is a vector field on $TQ$ satisfying $S(\Gamma)=\Delta$. In coordinates,
\[
\Gamma=\dot{q}^i\frac{\partial}{\partial q^i}+\Gamma^i(q, \dot{q})\frac{\partial}{\partial \dot{q}^i}.
\]
The solutions of the SODE $\Gamma$ are precisely the solutions of the system of second order differential equations
\[
\frac{d^2 q^i}{dt^2}=\Gamma^i(q, \dot{q}).
\]
As shown in the following section, SODE's are key elements to describe intrinsically Lagrangian mechanics.

\section{Lagrangian mechanics}\label{Sec:Lagrange}

The calculus of variations can be defined geometrically by means of SODE's.  
Consider  a curve $c:[a, b]\to Q$ of class $C^2$ connecting two fixed points  in the configuration space $Q$. The set of all these curves is denoted by 
\begin{eqnarray*}
{\mathcal C}^2(q_0, q_1, [a, b])=\left\{ c: [a, b]\subseteq {\mathbb R}\longrightarrow Q\; \Big|\; c\in C^2,\; c(a)=q_0,\; c(b)=q_1\right\}.
\end{eqnarray*}
This set is a smooth infinite dimensional manifold. Its tangent space at $c$ is given by
\begin{eqnarray*}
T_c{\mathcal C}^2(q_0, q_1, [a, b])=\left\{ X: [a, b]\longrightarrow TQ\; \Big|\ X\in C^1, X(t)\in T_{c(t)}Q \; \forall t\in[a,b] \; \hbox{ and } X(a)=X(b)=0, \right\}.
\end{eqnarray*}
Now, let $L: TQ\rightarrow \R$ be a Lagrangian function and consider the action functional 
\[
\begin{array}{rrcl}
{\mathcal J}:&{\mathcal C}^2(q_0, q_1, [a, b])&\longrightarrow&\R\\
&c&\longmapsto &\int_a^b L(\dot{c}(t))\; dt.
\end{array}
\]

\begin{definition}{\bf [Hamilton's principle]}
A curve $c\in{\mathcal C}^2(q_0, q_1, [a, b])$ is a solution of the Lagrangian system given by $L: TQ\rightarrow \R$ if and only if $c$ is a critical point of ${\mathcal J}$, that is, 
\begin{equation}\label{var}
d{\mathcal J}(c)=0\, .
\end{equation}
\end{definition}
Using standard arguments from variational calculus, it is easy to show that the solutions of the Lagrangian system given by (\ref{var})  are the solutions of the Euler-Lagrange equations for the Lagrangian $L: TQ\rightarrow \R$: 
\[
\frac{d}{dt}\left(\frac{\partial L}{\partial \dot{q}^i}\right)-\frac{\partial L}{\partial q^i}=0, \quad 1\leq i\leq n=\dim Q
\]
where $(q^i, \dot{q}^i)$ are local coordinates for $TQ$. 

Now we will derive intrinsically the Euler-Lagrange equations using the geometry of the tangent bundle.
 Given $L: TQ\rightarrow \R$  we define the \emph{Poincar\'e-Cartan 1-form}  $\Theta_L=S^*(dL)$, the associated  \emph{Poincar\'e-Cartan 2-form}  $\Omega_L=-d\Theta_L$
and the \emph{energy function} $E_L: TQ\rightarrow \R$ by $E_L=\Delta(L)- L$. Locally,
\[ \Theta_L=\frac{\partial L}{\partial \dot{q}^i}\, dq^i, \quad
E_L=\dot{q}^i\frac{\partial L}{\partial \dot{q}^i}-L\; .
\]
When the Lagrangian $L$ is regular, that is,  $\Omega_L$ is a symplectic 2-form, or locally when the $n\times n$-Hessian matrix 
$(\partial^2 L/\partial \dot{q}^i\partial \dot{q}^j)$ is regular, then there exists a unique SODE $\Gamma_L$ solution of the equation
\begin{equation}\label{aqw}
i_{\Gamma_L}\Omega_L=dE_L,
\end{equation}
or alternatively 
\begin{equation}\label{eq:Alternative}
{\mathcal{L}}_{\Gamma_L}\Theta_L=dL,
\end{equation}
where ${\mathcal{L}}_{\Gamma_L}\Theta_L$ is the Lie derivative of $\Theta_L$ along $\Gamma_L$.

The integral curves of $\Gamma_L$ are precisely the solutions to the Euler-Lagrange equations for $L$.

\subsection{Constrained Lagrangian mechanics: nonholonomic systems}\label{Sec:Nonholo}

In this section, we will see one of the main examples where second order differential equations along submanifolds arise: the case of nonholonomic Lagrangian systems. To do so, we introduce constraints to a given Lagrangian system $L: TQ\rightarrow \R$. 

\begin{definition}
A nonholonomic Lagrangian system on a manifold $Q$ consists of a pair $(L, M)$ where $L: TQ\rightarrow {\mathbb R}$ is a Lagrangian function and $M$ is a submanifold of $TQ$
\end{definition}
Let $\tau_Q: TQ\rightarrow Q$ be the canonical projection. 
In the sequel we will assume that $\tau_Q(M)=Q$ avoiding the existence of holonomic constraints (see Section~\ref{holonomic} for more details). In mechanical and real examples, $M$ is typically a vector subbundle $D$ of $\tau_Q$, that is, the constraints
 are linear on velocities; in other examples, $M$ is an affine subbundle modelled on a vector bundle $D$.
From now on,  we  assume that $M=D$ is a vector subbundle or an affine subbundle modelled on $D$.

The existence  of the constraints prescribed by $M$ induces the introduction of reaction forces which restrict the motion to $M$.  This forces are determined by the Lagrange-d'Alembert principle.

Define the set of admissible curves by
\begin{eqnarray*}
{\mathcal C}_M^2(q_0, q_1, [a, b])=\left\{ c: [a, b]\subseteq {\mathbb R}\longrightarrow Q\; \Big|\; c\in {\mathcal C}^2(q_0, q_1, [a, b]), \  \dot{c}(t)\in M_{c(t)} \; \forall t\in[a,b]\right\},
\end{eqnarray*}
and the set of possible virtual variations along $c$ by
\begin{eqnarray*}
{\mathcal V}_c=\left\{ X: [a, b]\longrightarrow TQ\; \Big|\ X\in C^1, X(t)\in D_{c(t)} \; \forall t\in[a,b]\; \hbox{ and } X(a)=X(b)=0\right\}.
\end{eqnarray*}

\begin{definition}{\bf [Lagrange-d'Alembert's principle]} Let $c\in {\mathcal C}_M^2(q_0, q_1, [a, b])$, then $c$ is a solution of the nonholonomic Lagrangian system $(L, M)$ if 
\begin{equation*}\label{usc}
\langle d{\mathcal J}(c), X\rangle=0, \hbox{ for all } X\in  {\mathcal V}_c.
\end{equation*}
 \end{definition}
Locally, if the submanifold $M$ is determined by the vanishing of constraints $\phi^{\alpha}(q^i, \dot{q}^i)=0$ (either linear or affine constraints), then the equations of motion of a nonholonomic Lagrangian system are: 
\begin{eqnarray}
\frac{d}{dt}\left(\frac{\partial L}{\partial \dot{q}^i}\right)-\frac{\partial L}{\partial q^i}&=&\lambda_{\alpha}\frac{\partial \phi^{\alpha}}{\partial \dot{q}^i}\; ,\label{aaa} \\
\phi^{\alpha}(q^i, \dot{q}^i)&=&0\; .\nonumber
\end{eqnarray}
If the constraints are written as $\phi^{\alpha}(q^i, \dot{q}^i)=\mu^{\alpha}_i(q)\dot{q}^i+\mu^{\alpha}_0(q)$, then the previous equations reduce to:
\begin{eqnarray*}
\frac{d}{dt}\left(\frac{\partial L}{\partial \dot{q}^i}\right)-\frac{\partial L}{\partial q^i}&=&\lambda_{\alpha}\mu^{\alpha}_i(q)\; ,\\
\mu^{\alpha}_i(q)\dot{q}^i+\mu^{\alpha}_0(q)&=&0\; .
\end{eqnarray*}
If the Hessian matrix $W$ of $L$ with respect to the velocities is definite, then the matrix
\[
{\mathcal C}=(C_{\alpha\beta}) \hbox{ with } C_{\alpha\beta}=\mu^{\alpha}_i W^{ij}\mu^{\beta}_j
\]
is regular, where $(W^{ij})$ is the inverse of the Hessian matrix
\[
W_{ij}=\left( \frac{\partial^2 L}{\partial \dot{q}^i\partial \dot{q}^j}
\right)\; .
\]
Observe that the definiteness  condition is automatically satisfied in  mechanics when  $L=T-V$, being $T$  the kinetic energy associated to a Riemannian metric on $Q$ and being $V$ the potential energy. 
It is easy to show that under this condition, we can write the equations of motion of a nonholonomic system as a system of explicit second order differential equations on the constraint submanifold $M$.  In fact, the Lagrange multipliers are determined univocally as
\begin{eqnarray*}
{\lambda}_{\alpha}(q, \dot{q})&=& -C_{\alpha\beta}\left(\frac{\partial \mu^{\beta}_i}{\partial q^j}\dot{q}^i\dot{q}^j+\frac{\partial \mu^{\beta}_0}{\partial q^i}\dot{q}^i+\mu^{\beta}_iW^{ij}\left[\frac{\partial L}{\partial q^j}-\frac{\partial^2 L}{\partial \dot{q}^j\partial q^k}\dot{q}^k\right]\right),
  \end{eqnarray*}
and given an initial condition on $M$, $\dot{c}(0)\in M_{c(0)}$, the unique solution of the second order differential equation 
\[
\ddot{q}^i=W^{ij}\left[\lambda_{\alpha}(q, \dot{q})\mu^{\alpha}_j(q)+\frac{\partial L}{\partial q^j}-\frac{\partial^2 L}{\partial \dot{q}^j\partial q^k}\dot{q}^k\right]
\]
evolves on the constraint submanifold $M$, that is, $\dot{c}(t)\in M_{c(t)}$.

\subsubsection{Nonholonomic Chaplygin systems}\label{chaplygin}

Let us consider a nonholonomic Lagrangian system with symmetry, that is, a nonholonomic Lagrangian system $(L, D)$ where $D$ is a vector subbundle of $TQ$ (for simplicity)  and a Lie group action $\Psi: G\times Q\to Q$, such that both $L$ and $D$ are $G$-invariant with respect to the induced action on $TG$. Consider the subclass 
of nonholonomic systems with symmetry corresponding to
\[
D_q\oplus T_q \hbox{Orb}(q)=T_qQ,
\]
known as the purely kinematical case, where the symmetry directions complement the constraints given by $D$.  Here $\hbox{Orb}(q)=\{\bar{q}\in Q\; \Big|\; \bar{q}=\Psi(g, q), \hbox{with } g\in G\}$ is the orbit of $q\in Q$. 

In the particular  case of Chaplygin systems, these  data are given by a principal $G$-bundle $\pi: Q\to Q/G$, associated with a free and proper action $\Psi: G\times Q\to Q$ such that $L$ is $G$-invariant and $D$ is determined by the horizontal distribution of a principal connection ${\mathcal A}: TQ\to {\mathfrak g}$. Remember that ${\mathcal A}(\xi_Q(q))=\xi$, where 
\[
\xi_Q(q)=\frac{d}{dt}\Big|_{t=0}\Psi(\hbox{exp }(t\xi), q), \hbox{ with } \xi\in {\mathfrak g}
\]
and ${\mathcal A}(T\Psi_g(X))=\hbox{Ad}_g({\mathcal A}(X))$, for all $X\in TQ$ where $\Psi_g(q)=\Psi(g, q)$. Observe that in this case
\[
D_q=\{ v_q\in T_qQ\; \Big|\; {\mathcal A}(v_q)=0\},
\]
that is, $D_q$ is the horizontal subspace at $q$ determined by the connection ${\mathcal A}$. 

Therefore, for any $v_q\in T_q Q$ we have a unique decomposition
\[
v_{q}=\hbox{hor}_q v_q +\hbox{ver}_q v_q\; ,
\]
where
\[
\hbox{ver}_q v_q=(\mathcal{A}(v_q))_Q(q)\; .
\]
Then
\[
\hbox{hor}_q v_q =v_q- (\mathcal{A}(v_q))_Q(q)\in D_q.
\]
The projection map $\pi: Q\to Q/G$ induces an isomorphism from $D_q$
to $T_{\pi(q)}(Q/G)$, and the inverse map is called the horizontal lift. 
Thus for any vector field $X\in {\mathfrak X}(Q/G)$ on the base space, we have a unique vector field $X^h$ (the horizontal lift of $X$) that is horizontal and $\pi$-related to $X$.

 We define the curvature ${\mathcal B}$ of ${\mathcal A}$ as the Lie algebra valued 2-form
 defined by
 \[
 {\mathcal B}(X,Y)=-{\mathcal A}[X^h, Y^h],  \qquad X, Y\in \mathfrak{X}(Q/G)\; .
 \]
  Consider a local trivialization $U\times G$ of $\pi$ where now the action of $G$ is given by left translation on the second factor and $U$ is a neighborhood of $Q/G$. Take coordinates $r^a$ on $U$ and a basis $\{e_{\alpha}\}$ of ${\mathfrak g}$. Then, any element $\xi\in {\mathfrak g}$ is written as $\xi=\xi^{\alpha}e_{\alpha}$. In this local trivialization we can write the connection ${\mathcal A}$ as follows
\[
{\mathcal A}(r, g, \dot{r}, \dot{g})=\hbox{Ad}_g (g^{-1}\dot{g}+A^{\alpha}_a \dot{r}^a e_{\alpha})\; .
\]
Similarly, the coefficients of the curvature are
\[
{\mathcal B}^{\alpha}_{ab}=\frac{\partial A^{\alpha}_a}{\partial r^b}
-\frac{\partial A^{\alpha}_b}{\partial r^a}-C^{\alpha}_{\beta\gamma}A^{\beta}_bA^{\gamma}_a\; ,
\]
where
\[
{\mathcal B}\left(\frac{\partial} {\partial r^a}, \frac{\partial} {\partial r^b}\right)={\mathcal B}^{\alpha}_{ab}e_{\alpha}.
\]

In this case, the Lagrangian $L: TQ\rightarrow \R$ induces a Lagrangian $L^*: T(Q/G)\rightarrow \R$
by 
\[
L^*(X(\bar{q}))=L(X^h(q)).
\]
Locally,
\[
L^*(r^a, \dot{r}^a)=l(r^a, \dot{r}^a, -A^{\alpha}_a \dot{r}^a e_{\alpha}),
\]
where $l: TU\times {\mathfrak g}\to \R$ represents the reduction of $L: T(U\times G)\rightarrow \R$  to $TQ/G$. 

After some computations,  we can see that the reduced dynamics are given by the following system of equations on $T(Q/G)$:
\[
\frac{d}{dt}\left(\frac{\partial L^*}{\partial \dot{r}^a}\right)-\frac{\partial L^*}{\partial r^a}=
\Lambda_a,
\]
where
\[
\Lambda_a=-\left(\frac{\partial l}{\partial \xi^{\alpha}}\right)_{\bf c} {\mathcal B}^{\alpha}_{ab}\dot{r}^b
\]
and the subindex ``${\bf c}$'' on the right-hand side indicates that, after computing the derivative of $l$ with respect to $\xi^a$, one evaluates this partial derivative on $(r^a, \dot{r}^a, -A^{\alpha}_a \dot{r}^a e_{\alpha})$. 

Moreover, if $L$ is regular, we have that $L^*$ is also regular and we obtain the following system of second-order differential equations now defined on the full space $T(Q/G)$: 
\begin{equation}\label{asr}
\frac{d^2 r^a}{dt^2}=\widehat{W}^{ab}\left(\frac{\partial L^*}{\partial r^b}-\Lambda_b\right)\; ,
\end{equation}
where $(\widehat{W}^{ab})$ is the inverse of the Hessian matrix
$\widehat{W}_{ab}=\left(\frac{\partial^2 L^*}{\partial \dot{r}^a\partial \dot{r}^b}\; 
\right)$.

\subsection{Variational constrained equations}\label{VC}

Now we study a dynamical system given by the same pair $(L, M)$  but using purely variational techniques.
As above, let us consider a regular Lagrangian $L: TQ\Flder {\mathbb R}$, and a set of  constraints $\phi^{\alpha}(q^{i},\dot q^{i})=0$, $1\leq\alpha\leq m$ that determine a $2n-m$ dimensional submanifold $M\subset TQ$. Take the extended Lagrangian $\mathcal{L}={L}+\lambda_{\alpha}\phi^{\alpha}$ which includes the Lagrange multipliers $\lambda_{\alpha}$ as new extra variables. The equations of motion for the constrained variational problem are the Euler-Lagrange equations for $\mathcal{L}$, that is:
\begin{eqnarray}\nonumber
&&\frac{\mathrm{d}}{\mathrm{d}t}\lp\frac{\der {L}}{\der\dot q^{i}}\rp-\frac{\der {L}}{\der q^{i}}=-\dot\lambda_{\alpha}\frac{\der\phi^{\alpha}}{\der\dot q^{i}}-\lambda_{\alpha}\left[\frac{\mathrm{d}}{\mathrm{d}t}\lp\frac{\der\phi^{\alpha}}{\der\dot q^{i}}\rp-\frac{\der\phi^{\alpha}}{\der q^{i}}\right],\\\label{Vako}\\\nonumber
&&\phi^{\alpha}(q^{i},\dot q^{i})=0,\,\,\,\,1\leq\alpha\leq m.
\end{eqnarray}

Observe that the equations of a variational constrained system are different from the equations of a nonholonomic system given in (\ref{aaa}).

\subsection{Lagrangian mechanics using  the Tulczyjew's triple}\label{Sec:Tulczy}

The theory of Lagrangian submanifolds gives an intrinsic geometric description of Lagrangian and Hamiltonian dynamics~\cite{TuH},~\cite{Tu}. Moreover, it allows us to relate Lagrangian and Hamiltonian formalisms using as a main tool the so-called Tulczyjew's triple
\begin{equation*}
\xymatrix{ T^*TQ && TT^*Q \ar[rr]^{\hbox{\small{$\beta_Q$}}} \ar[ll]_{\hbox{\small{$\alpha_Q$}}} && T^*T^*Q\, .}
\end{equation*}

 The Tulczyjew map $\alpha_{Q}$ is an isomorphism between $TT^*Q$ and $T^*TQ$. Besides, it is also a symplectomorphism
between these  vector bundles considered as  symplectic manifolds, i.e. $(TT^*Q\,,\,\mathrm{d}_T\,\omega_Q)$, where
$\mathrm{d}_T\,\omega_Q$ is the tangent lift of $\omega_Q$, and $(T^{*}TQ, \omega_{TQ})$. For completeness, we recall
the construction of the symplectomorphism $\alpha_Q$. To do this, it is necessary to introduce the canonical involution $\kappa_Q$ on $TTQ$
\begin{equation*}
\xymatrix{
TTQ\ar[d]_{\tau_{TQ}}\ar[r]^{\kappa_{Q}}&TTQ\ar[d]^{T\tau_{Q}}\\
TQ\ar[r]_{\mbox{Id}}&TQ,
}
\end{equation*}
defined by
$$
\kappa_{Q}\lp\frac{{\rm d}}{{\rm d}s}\Big |_{s=0}\frac{{\rm d}}{{\rm d}t}\Big |_{t=0}\hspace{1mm}\chi\lp s,t\rp\rp=
\frac{{\rm d}}{{\rm d}s}\Big |_{s=0}\frac{{\rm d}}{{\rm d}t}\Big |_{t=0}\hspace{1mm}\tilde\chi\lp s,t\rp,
$$
where $\chi:\mathbb{R}\cua\rightarrow Q$ and $\tilde\chi:\mathbb{R}\cua\rightarrow Q$ are related by $\tilde\chi\lp s,t\rp=\chi\lp t,s\rp$. If $\lp q^{i}\rp$ are the local coordinates for $Q$, $\lp q^{i},v^{i}\rp$ for $TQ$ and $\lp q^{i},v^{i},\dot q^{i},\dot v^{i} \rp$ for $TTQ$, then the canonical involution is locally given by $\kappa_{Q}\lp q^{i},v^{i},\dot q^{i},\dot v^{i}\rp=\lp q^{i},\dot q^{i},v^{i},\dot v^{i}\rp$.

In order to describe $\alpha_{Q}$ it is also necessary to define a tangent pairing. Given two manifolds $M$ and $N$, and a pairing $\bra\cdot,\cdot\ket:M\times N\rightarrow\mathbb{R}$ between them, the tangent pairing $\bra\cdot,\cdot\ket^{T}:TM\times TN\rightarrow\mathbb{R}$ is determined by
$$
\left\langle\frac{{\rm d}}{{\rm d}t}\Big |_{t=0}\hspace{1mm}\gamma\lp t\rp,\frac{{\rm d}}{{\rm d}t}\Big |_{t=0}\hspace{1mm}\delta\lp t\rp\right\rangle^{T}=\frac{{\rm d}}{{\rm d}t}\Big |_{t=0}\hspace{1mm}\langle\gamma\lp t\rp,\delta\lp t\rp\rangle
$$
where $\gamma:\mathbb{R}\rightarrow M$ and $\delta:\mathbb{R}\rightarrow N$.

Finally, we can define  $\alpha_{Q}$ as $\langle\alpha_{Q}\lp z\rp,w\rangle=\langle z,\kappa_{Q}\lp w\rp\rangle^{T}$, where $z\in TT^{*}Q$ and $w\in TTQ$. In local coordinates $\lp q^{i},p_{i}\rp$ for $T^{*}Q$ and $\lp q^{i},p_{i},\dot q^{i},\dot p_{i}\rp$ for $TT^{*}Q$, we have
$$
\alpha_{Q}\lp q^{i},p_{i},\dot q^{i},\dot p_{i}\rp=\lp q^{i},\dot q^{i},\dot p_{i},p_{i}\rp.
$$

 The  isomorphism $\beta_{Q}:TT^{*}Q\rightarrow T^{*}T^{*}Q$ is just given by $\beta_Q=\flat_{\omega_Q}$, where $\flat_{\omega_Q}$ is the
isomorphism defined by $\omega_Q$, that is, $\flat_{\omega_Q}(v)=i_v \omega_Q$.

 The Lagrangian dynamics is described by the Lagrangian submanifold ${\rm d}{ L}(TQ)$ of $T^*TQ$ where ${L}\colon TQ \rightarrow \mathbb{R}$ is the Lagrangian function, while the Hamiltonian formalism is described by the Lagrangian submanifold ${\rm d}H(T^*Q)$ of $T^*T^*Q$ where $H\colon T^*Q \rightarrow \mathbb{R}$ is the corresponding Hamiltonian energy. The solutions of the dynamics are curves $\gamma: I\subset \R\to T^*Q$ such that $\frac{\mathrm{d}\gamma}{\mathrm{d}t}:I\subset \R\to TT^*Q$ verifies that $\frac{\mathrm{d}\gamma}{\mathrm{d}t}(I)\subset \alpha_Q^{-1}\lp {d}{\rm L}(TQ)\rp$ in the Lagrangian description and $\frac{\mathrm{d}\gamma}{\mathrm{d}t}(I)\subset \beta_Q^{-1}\lp{\rm d}{ H}(T^*Q)\rp$ in the Hamiltonian case.

  Variationally constrained problems described in Section \ref{VC} are determined by a pair $(M, l)$ where $M$ is a submanifold of $TQ$, with inclusion $i_M: M\hookrightarrow TQ$, and $l: M\to \R$ is a Lagrangian function restricted to $M$. The submanifold  $\Sigma_l$ is a Lagrangian submanifold of $(T^*TQ, \omega_{Q})$ (see Theorem 2.5). Now using the Tulczyjew's symplectomorphism $\alpha_Q$, we induce a new Lagrangian submanifold
$\alpha_Q^{-1}\lp\Sigma_l\rp$ of $(TT^*Q, \mathrm{d}_T\omega_Q)$, which completely determines the constrained variational dynamics.
Now we will see that this procedure gives the correct equations for the constrained variational dynamics.
Take an arbitrary extension $L:TQ\to\R$ of $l: M\to \R$, that is, $ L\circ i_M=l$. Locally, 
\begin{eqnarray*}\label{SigmaLCont}
\Sigma_l&=&\{
(q^i, \dot{q}^i, \mu_{i}, \tilde{\mu}_i)\in T^*TQ\; |\;  \mu_i=\frac{\partial {L} }{\partial q^i} +\lambda_{\alpha}\frac{\partial \phi^{\alpha} }{\partial q^i},\\\nonumber
&&\tilde{\mu}_i=\frac{\partial {L} }{\partial \dot{q}^i} +\lambda_{\alpha}\frac{\partial \phi^{\alpha} }{\partial \dot{q}^i},\quad \phi^{\alpha}(q,\dot q)=0,\,\,\, 1\leq\alpha\leq m\}\; .
\end{eqnarray*}
Therefore,
\begin{eqnarray*}\label{TulcSigmaLCont}
\alpha_Q^{-1}\lp\Sigma_l\rp&=&\{(q^i, p_i, \dot{q}^i, \dot{p}_i)\in TT^*Q\;|\; p_i=\frac{\partial {L} }{\partial \dot{q}^i} +\lambda_{\alpha}\frac{\partial \phi^{\alpha} }{\partial \dot{q}^i},\\\nonumber
&&\dot{p}_i=\frac{\partial {L} }{\partial q^i} +\lambda_{\alpha}\frac{\partial \phi^{\alpha} }{\partial q^i},\; \phi^{\alpha}(q,\dot q)=0,\,\,\, 1\leq\alpha\leq m\}\; .
\end{eqnarray*}
The solutions for the dynamics given by $\alpha_Q^{-1}\lp\Sigma_l\rp\subset TT^*Q$ are curves $\gamma: I\subset \R\to T^*Q$ such that $\frac{\mathrm{d}\gamma}{\mathrm{d}t}:I\subset \R\to TT^*Q$ verifies that $\frac{\mathrm{d}\gamma}{\mathrm{d}t}(I)\subset \alpha_Q^{-1}\lp\Sigma_l\rp$. Locally, if $\gamma(t)=(q^i(t), p_i(t))$ then it must verify the following set of differential equations:
\begin{eqnarray*}
\frac{\mathrm{d}}{\mathrm{d}t}\left(\frac{\partial {L} }{\partial \dot{q}^i} +\lambda_{\alpha}\frac{\partial \phi^{\alpha} }{\partial \dot{q}^i}\right)-\frac{\partial {L} }{\partial q^i} -\lambda_{\alpha}\frac{\partial \phi^{\alpha} }{\partial q^i}&=&0,\\
\phi^{\alpha}(q^{i},\dot q^{i})&=&0,
\end{eqnarray*}
which coincide with equations (\ref{Vako}).

\section{The inverse problem of the calculus of variations}\label{SInverse}

In the previous section it is shown that given a regular Lagrangian function $L: TQ\rightarrow \R$ we can always associate a unique SODE $\Gamma_L$, see equation (\ref{aqw}).
The inverse problem of the calculus of variations studies when a prescribed SODE $\Gamma$ is   equivalent to the  Euler-Lagrange equations for a regular Lagrangian $L: TQ\rightarrow \R$, in the sense of searching a non-singular multiplier matrix $( g_{ij}(q, \dot{q}))$ such that
\begin{equation}\label{eq:condInvP}
g_{ij}\left(\ddot{q}^j-\Gamma^j(q, \dot{q})\right)=\frac{d}{dt}\left(\frac{\partial L}{\partial \dot{q}^i}\right)- \frac{\partial L}{\partial q^i}, \quad i,j=1,\ldots,n=\dim Q 
\end{equation}
has a regular solution $L$. Note that in the affirmative case we have that $g_{ij}=\frac{\partial^{2}L}{\partial\dot{q}^{i}\partial\dot{q}^{j}}$ and the solutions to $\Gamma$ are exactly the same as the solutions to the Euler-Lagrange equations for $L$.

Geometrically, condition~\eqref{eq:condInvP} can be captured into the requirement of the existence of a function $L: TQ\rightarrow \R$ such that
${\mathcal L}_{\Gamma}\Theta_L=dL$, see~\eqref{eq:Alternative}.
When the condition is satisified, the SODE $\Gamma$ is called \emph{variational}.

The existence of a regular Lagrangian for $\Gamma$ is equivalent to the existence of multipliers $(g_{ij}(q,\dot{q}))$ satisfying the  \emph{Helmholtz conditions} (see \cite{Helmholtz1887} for a more general version):
\begin{eqnarray}
&&\det(g_{ij})\not= 0, \qquad g_{ji}=g_{ij}, \qquad
\frac{\partial g_{ij}}{\partial \dot{q}^k}=\frac{\partial g_{ik}}{\partial \dot{q}^j} \label{eq:Helmholtz1}\\
&&\Gamma(g_{ij})-\nabla^k_jg_{ik}-\nabla^k_i g_{kj}=0,  \label{eq:HelmholtzNabla}\\ &&
g_{ik}\Phi^k_j=g_{jk}\Phi^k_i.  \label{eq:HelmholtzPhi}
\end{eqnarray}
where $\Gamma=\dot{q}^i\frac{\partial}{\partial q^i}+
\Gamma^i(q, \dot{q})\frac{\partial}{\partial \dot{q}^i}$,
$\nabla^i_j=-\frac{1}{2}\frac{\partial \Gamma^i}{\partial \dot q^j}$ and
$\Phi^k_j=\Gamma\left(\frac{\partial \Gamma^k}{\partial \dot q^j}\right)-2\frac{\partial \Gamma^k}{\partial  q^j}-\frac{1}{2}\frac{\partial \Gamma^i}{\partial \dot q^j}\frac{\partial \Gamma^k}{\partial \dot q^i}.
$

The problem is specially difficult since Helmholtz conditions are a  mixed set of algebraic equations and partial differential equations (PDE) for the multipliers $g_{ij}$.
There are many characterizations of the inverse problem of the calculus of variations in the literature, but not much is known about the complete solution. For instance, $n=1$ is always variational \cite{Sonin1886} and $n=2$ was solved by Douglas in \cite{Douglas}, but for $n>2$ no complete classification exists. Some partial results exist, more precisely, some cases in Douglas' classification have been generalized to arbitrary $n$. See for instance \cite{CPST1999}, \cite{94CSMBP}, \cite{SCM1998}.

The following characterization of being variational will be very useful in the sequel.
\begin{thm}\cite{81Crampin}\label{theorem-crampin}
    A SODE $\Gamma$ on $TQ$ is variational if and only if there exists a $2$-form $\Omega$ on
    $TQ$ of maximal rank such that
    \begin{enumerate}
		\item $d\Omega=0$,
		\item $\Omega(v_{1},v_{2})=0\ \forall v_{1}, v_{2}\in V(TQ)$, where $V(TQ)$ denotes the set of all vertical vector fields for $\tau_{Q}\colon TQ \to Q$, that is, $V(TQ)={\rm Ker} T\tau_Q$,
    \item $\mathcal{L}_{\Gamma}\Omega=0$.
    \end{enumerate}
    \end{thm}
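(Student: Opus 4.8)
The plan is to prove the two implications separately, using the characterization of being variational via the existence of a regular Lagrangian $L$ with $\mathcal{L}_{\Gamma}\Theta_L = dL$, together with the structure of the Poincar\'e--Cartan forms.

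\textbf{The variational $\Rightarrow$ exists $\Omega$ direction.} Suppose $\Gamma$ is variational, so there is a regular Lagrangian $L\colon TQ\to\R$ with $\mathcal{L}_{\Gamma}\Theta_L = dL$, hence $\mathcal{L}_{\Gamma}\Omega_L = -\mathcal{L}_{\Gamma}d\Theta_L = -d\mathcal{L}_{\Gamma}\Theta_L = -d(dL) = 0$. I would simply take $\Omega = \Omega_L = -d\Theta_L$. Condition (i), $d\Omega = 0$, is immediate since $\Omega_L$ is exact. Condition (iii), $\mathcal{L}_{\Gamma}\Omega = 0$, is exactly what we just derived. For condition (ii), I would compute in natural coordinates: since $\Theta_L = \frac{\partial L}{\partial \dot q^i}\,dq^i$, we get $\Omega_L = -d\Theta_L$ and every term of $\Omega_L$ contains at least one factor $dq^i$; a vertical vector field $V \in V(TQ)$ satisfies $dq^i(V) = 0$, so $\Omega_L(v_1,v_2) = 0$ for $v_1,v_2$ vertical. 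Maximal rank of $\Omega$ is precisely the regularity of $L$ (non-degeneracy of the Hessian). Alternatively, and more intrinsically, one can observe that condition (ii) says $\Omega_L$ vanishes on the vertical subbundle, which follows from $\Theta_L = S^*(dL)$ and the fact that $S$ annihilates verticals; but the coordinate computation is cleaner.

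\textbf{The exists $\Omega$ $\Rightarrow$ variational direction.} This is the substantive direction. Given a closed $2$-form $\Omega$ of maximal rank satisfying (ii) and (iii), I need to reconstruct a regular Lagrangian. The strategy: since $d\Omega = 0$, locally $\Omega = -d\vartheta$ for some $1$-form $\vartheta$ on $TQ$ (Poincar\'e lemma); the goal is to arrange that $\vartheta$ can be chosen in the form $\Theta_L = S^*(dL)$ for a suitable $L$, up to a closed $1$-form. The key structural input is condition (ii): a $2$-form that is closed and vanishes on the vertical distribution $V(TQ) = \ker T\tau_Q$, which is moreover Lagrangian for such a form when $\Omega$ has maximal rank (since $\dim V(TQ) = n = \frac{1}{2}\dim TQ$, Proposition~\ref{prop:LagrangianSubmanifoldIsotropic} applies fiberwise). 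One shows that such an $\Omega$ must have the local form $\Omega = dq^i \wedge (g_{ij}\,d\dot q^j + h_{ij}\,dq^j)$ for some coefficient functions $g_{ij}$, $h_{ij}$, with $g_{ij}$ the nondegenerate block coming from maximal rank. Then $d\Omega = 0$ forces the symmetry $g_{ij} = g_{ji}$ and the closure-type relations $\partial g_{ij}/\partial \dot q^k = \partial g_{ik}/\partial \dot q^j$, exactly the first Helmholtz conditions~\eqref{eq:Helmholtz1}; condition (iii), $\mathcal{L}_{\Gamma}\Omega = 0$, then yields the remaining Helmholtz conditions~\eqref{eq:HelmholtzNabla} and~\eqref{eq:HelmholtzPhi}. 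By the classical Helmholtz theorem cited before the statement, the existence of such a multiplier $(g_{ij})$ is equivalent to $\Gamma$ being variational, so we are done. Equivalently and more in the spirit of this paper, one can argue directly: since $\partial g_{ij}/\partial \dot q^k$ is totally symmetric and $g_{ij} = g_{ji}$, locally $g_{ij} = \partial^2 L/\partial \dot q^i \partial \dot q^j$ for some function $L$ determined up to terms affine in the velocities; regularity of $L$ is the maximal rank of $\Omega$, and one adjusts the velocity-affine part using the remaining conditions so that $\Omega = \Omega_L$ and then $\mathcal{L}_{\Gamma}\Theta_L - dL$ is closed, hence (shrinking the neighborhood) exact, and can be absorbed, giving $\mathcal{L}_{\Gamma}\Theta_L = dL$.

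\textbf{Main obstacle.} The hard part is the reconstruction step: extracting from the abstract conditions (i)--(iii) the precise local normal form of $\Omega$ and then recognizing the Helmholtz conditions, in particular reconstructing $L$ from its Hessian $g_{ij}$ and checking that the velocity-affine ambiguity can be fixed so that both $\Omega = \Omega_L$ \emph{and} $\mathcal{L}_{\Gamma}\Theta_L = dL$ hold simultaneously rather than merely up to a closed form. I expect this to require either a careful bookkeeping of the $\mathcal{L}_{\Gamma}\Omega = 0$ condition in coordinates or an invariant argument using the SODE condition $S(\Gamma) = \Delta$ and properties of the vertical endomorphism $S$; either way it is where the real content lies, whereas the forward direction and the reduction to Helmholtz's classical theorem are routine. (A careful coordinate verification of this equivalence is in fact what is deferred to Appendix~\ref{Appendix}.)
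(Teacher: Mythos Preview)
The paper does not give its own proof of this theorem: it is stated with the citation \cite{81Crampin} and used as a known result, most notably as the engine behind the proof of Theorem~\ref{poi}. So there is no ``paper's own proof'' to compare your proposal against directly.

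That said, your approach is sound and essentially the standard one. The forward direction is correct and routine. For the converse, note that the paper does, in effect, carry out the reconstruction you sketch---not for this theorem per se, but in the proof of Theorem~\ref{poi}: starting from an $\Omega$ satisfying (i)--(iii), one writes $\Omega = d\Theta$ locally, observes that the restriction of $\Theta$ to each fibre is closed (by (ii)) hence exact, subtracts $df$ to obtain a semi-basic $\widetilde\Theta$, and then builds $F\colon TQ\to T^*Q$ from $\widetilde\Theta$. The Appendix then shows that closedness of $\mu_{\Gamma,F} = \mathcal{L}_\Gamma \widetilde\Theta$ (which follows from (iii)) is equivalent to the Helmholtz conditions for $g_{ij}=\partial F_i/\partial\dot q^j$. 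So the paper's machinery contains, distributed between Theorem~\ref{poi} and Appendix~\ref{Appendix}, exactly the reconstruction-and-Helmholtz argument you outline; your identification of the ``main obstacle'' matches where the real work lies.
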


\subsection{A new geometric characterization for the inverse of the calculus of variations}\label{alter}

For a given SODE $\Gamma :TQ \longrightarrow TTQ$ and a local diffeomorphism $F:TQ\longrightarrow
    T^{*}Q$ of fibre bundles over $Q$ (that is, $\pi_Q\circ F=\tau_Q$),
    we define a submanifold $\Sigma_{\Gamma,F}:=$Im$(\mu_{\Gamma, F})\subset T^{*}TQ$,
    where $\mu_{\Gamma, F}=\alpha_Q \circ
    TF\circ\Gamma$ is a $1$-form on $TQ$:
    $$
    \xymatrix{
    TTQ \ar[r]^{TF} & TT^{*}Q \ar[r]^{\alpha_Q} & T^{*}TQ \\
    &&&\\
    TQ \ar[uu]^{\Gamma} \ar[r]^{F} \ar[uurr]_{\mu_{\Gamma,F}} \ar[uur] & T^{*}Q & \\
    }
    $$

   Let $(q^{i},\dot{q}^i)$ denote
    fibered coordinates on $TQ$ and we write $F$ and $\Gamma$ in
    these coordinates as
   $$
   F\left(q^{i},\dot{q}^{i}\right)= \left(q^{i},F_{i}(q,\dot{q})\right), \quad
   \Gamma\left(q^{i},\dot{q}^{i}\right)= \left(q^{i},\dot{q}^{i},\dot{q}^{i},\Gamma^{i}(q,\dot{q})\right).
   $$

Then the above diagram in coordinates becomes
   $$
\xymatrix{
   \left(q^{i},\dot{q}^{i},\dot{q}^{i},\Gamma^{i}(q,\dot{q})\right) \ar[r]^/-15pt/{TF} &
    \left(q^{i},F_{i},\dot{q}^{i},\frac{\partial F_{i}}{\partial q^{j}}\dot{q}^{j}+\frac{\partial F_{i}}{\partial
    \dot{q}^{j}}\Gamma^{j}\right) \ar[r]^{\alpha_Q}
    & \left(q^{i},\dot{q}^{i},\frac{\partial F_{i}}{\partial q^{j}}\dot{q}^{j}+
    \frac{\partial F_{i}}{\partial \dot{q}^{j}}\Gamma^{j},F_{i}\right)& \\
    &&\\
    \left(q^{i},\dot{q}^{i}\right) \ar[r]^{F} \ar[uu]^{\Gamma} \ar[uurr]_{\mu_{\Gamma, F}} &  \left(q^{i},F_{i}\right).&& \\
    }
    $$
   Note that $\mu_{\Gamma,F}$ is a $1$-form on $TQ$ locally given by $\left(\frac{\partial F_{i}}{\partial
    q^{j}}\dot{q}^{j}+\frac{\partial F_{i}}{\partial
    \dot{q}^{j}}\Gamma^{j}\right)dq^{i}+F_{i}d\dot{q}^{i}$.
From this last expression it is easy to deduce that
\begin{equation}\label{useful-eq1}
\mu_{\Gamma, F}={\mathcal L}_{\Gamma}F^*\theta_Q.
\end{equation}

In this section we will show that the inverse problem of the calculus of variations  for a SODE  $\Gamma$  is equivalent to see whether or not it
    is possible to find a local diffeomorphism
     $F:TQ\longrightarrow T^{*}Q$ of fibre bundles over  $Q$
     such that $\Sigma_{\Gamma,F}=\hbox{Im}(\mu_{\Gamma, F})$ is  a Lagrangian submanifold of $(T^*TQ, \omega_{TQ})$. This characterization will be useful  for our approach to the inverse problem for constrained systems. 

    Observe that  since
    $\Sigma_{\Gamma,F}$ is the image of the 1-form $\mu_{\Gamma, F}$ on $TQ$, $\Sigma_{\Gamma,F}$ is a Lagrangian submanifold of $(T^*TQ, \omega_{TQ})$ if and only if $\mu_{\Gamma, F}$ is closed, i.e. $d\mu_{\Gamma, F}=0$. Therefore, using Poincar\'e lemma  we deduce the local existence of a  function $L$ on $TQ$
     such that $\mu_{\Gamma, F}=dL$.

\begin{thm}\label{poi}
A SODE $\Gamma$ on $TQ$ is variational if and only if there exists a  local diffeomorphism  $F:TQ\longrightarrow T^{*}Q$ of fibre bundles over $Q$ such that  $\hbox{Im}(\mu_{\Gamma, F})$   is a Lagrangian submanifold of $(T^*TQ, \omega_{TQ})$.
\end{thm}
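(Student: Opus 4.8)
The plan is to reduce the statement to the closedness of the $1$-form $\mu_{\Gamma,F}$ and then identify that condition with the characterization of variationality through $\mathcal{L}_\Gamma\Theta_L=dL$, recalled in \eqref{eq:Alternative}. Since $\Sigma_{\Gamma,F}=\operatorname{Im}(\mu_{\Gamma,F})$ is the image of a $1$-form on $TQ$, Proposition~\ref{prop:LagrangianClosedForm} says that $\Sigma_{\Gamma,F}$ is a Lagrangian submanifold of $(T^*TQ,\omega_{TQ})$ if and only if $\mu_{\Gamma,F}$ is closed. So the theorem becomes: $\Gamma$ is variational if and only if there is a fibred local diffeomorphism $F\colon TQ\to T^*Q$ over $Q$ with $d\mu_{\Gamma,F}=0$.

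For the implication ``$\Leftarrow$'', assuming $d\mu_{\Gamma,F}=0$, I would use the Poincar\'e lemma to obtain locally a function $L$ on $TQ$ with $\mu_{\Gamma,F}=dL$. Comparing the $d\dot q^i$-components in $\mu_{\Gamma,F}=\bigl(\tfrac{\partial F_i}{\partial q^j}\dot q^j+\tfrac{\partial F_i}{\partial\dot q^j}\Gamma^j\bigr)dq^i+F_i\,d\dot q^i$ with those of $dL$ yields $\partial L/\partial\dot q^i=F_i$, hence $\Theta_L=S^*(dL)=F_i\,dq^i=F^*\theta_Q$. Then \eqref{useful-eq1} gives $\mathcal{L}_\Gamma\Theta_L=\mathcal{L}_\Gamma F^*\theta_Q=\mu_{\Gamma,F}=dL$, which is exactly \eqref{eq:Alternative}; moreover the Hessian $\partial^2 L/\partial\dot q^i\partial\dot q^j=\partial F_i/\partial\dot q^j$ is non-singular because $F$, being a fibred local diffeomorphism over $Q$, restricts to a local diffeomorphism on each fibre $T_qQ\to T_q^*Q$. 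Thus $L$ is regular, $\Gamma=\Gamma_L$, and $\Gamma$ is variational. (Globally, one can instead check that $\Omega:=F^*\omega_Q=-d(F^*\theta_Q)$ satisfies the three conditions of Theorem~\ref{theorem-crampin}: it is closed; it has maximal rank since $F$ is a local diffeomorphism and $\omega_Q$ symplectic; it vanishes on $V(TQ)$ because $\pi_Q\circ F=\tau_Q$ forces $TF$ to map $\tau_Q$-vertical vectors to $\pi_Q$-vertical ones, on which $\omega_Q$ vanishes; and $\mathcal{L}_\Gamma\Omega=-d\,\mathcal{L}_\Gamma F^*\theta_Q=-d\mu_{\Gamma,F}=0$.)

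For the implication ``$\Rightarrow$'', if $\Gamma$ is variational there is a regular Lagrangian $L\colon TQ\to\R$ with $\mathcal{L}_\Gamma\Theta_L=dL$. I would take $F$ to be the Legendre transform $\operatorname{Leg}_L\colon TQ\to T^*Q$, $(q^i,\dot q^i)\mapsto(q^i,\partial L/\partial\dot q^i)$; regularity of $L$ makes it a local diffeomorphism, and it is fibred over $Q$ by construction. Since $F^*\theta_Q=\Theta_L$, formula \eqref{useful-eq1} gives $\mu_{\Gamma,F}=\mathcal{L}_\Gamma F^*\theta_Q=\mathcal{L}_\Gamma\Theta_L=dL$, which is exact, hence closed, so $\operatorname{Im}(\mu_{\Gamma,F})$ is Lagrangian by Proposition~\ref{prop:LagrangianClosedForm}.

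I expect no deep obstacle once the machinery of this section is in place; the one point that genuinely needs care is the \emph{regularity} of the Lagrangian recovered in the ``$\Leftarrow$'' direction, namely that being a fibred \emph{diffeomorphism} over $Q$ (rather than a mere bundle morphism) is exactly what makes $\partial F_i/\partial\dot q^j$, hence $\partial^2 L/\partial\dot q^i\partial\dot q^j$, invertible. A secondary bookkeeping issue is the local-versus-global nature of the statement (the Lagrangian is produced only locally, via the Poincar\'e lemma), which is why it is convenient to have the global reformulation through Theorem~\ref{theorem-crampin} with $\Omega=F^*\omega_Q$ available as a cross-check.
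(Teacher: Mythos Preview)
Your proof is correct. The ``$\Leftarrow$'' direction matches the paper's argument essentially verbatim: the paper also sets $\Omega=-d(F^*\theta_Q)$ and verifies the three conditions of Theorem~\ref{theorem-crampin}, which is precisely your ``global cross-check''. Your additional local argument via the Poincar\'e lemma (producing $L$ with $\mu_{\Gamma,F}=dL$ and then identifying $\Theta_L=F^*\theta_Q$) is not in the paper's proof proper, but the paper does carry out exactly this computation in the paragraph following the proof, so the content is the same.

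The ``$\Rightarrow$'' direction is where you differ. You take the regular Lagrangian $L$ that witnesses variationality and set $F=\operatorname{Leg}_L$, so that $F^*\theta_Q=\Theta_L$ and $\mu_{\Gamma,F}=\mathcal{L}_\Gamma\Theta_L=dL$ is exact. The paper instead starts from the 2-form $\Omega$ supplied by Theorem~\ref{theorem-crampin}, writes $\Omega=d\Theta$ locally, subtracts an exact term to make $\widetilde{\Theta}=\Theta-df$ semi-basic, and then \emph{defines} $F$ by $\langle F(v_q),w_q\rangle=\langle\widetilde{\Theta}(v_q),W_q\rangle$, finally checking $d\mu_{\Gamma,F}=\mathcal{L}_\Gamma\Omega=0$. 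Your route is shorter and more transparent, since the definition of ``variational'' already hands you the Lagrangian; the paper's route is more intrinsic (it never names $L$) and has the virtue that the same construction of $F$ from a semi-basic $\widetilde{\Theta}$ is reused verbatim in the constrained and time-dependent generalizations (Theorems~\ref{omega-constraints} and~\ref{Teop}), which is presumably why the authors chose it.
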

\begin{proof}
We use the characterization in Theorem \ref{theorem-crampin} to prove this result.

   $\Leftarrow$
   Define $\Omega=-d(F^{*}\theta_{Q})$, where $\theta_{Q}$ denotes
    the Liouville $1$-form on $T^{*}Q$ in~\eqref{eq:ThetaQ}. Note that if
    $F(q^i, \dot{q}^i)=(q^i, F_i(q, \dot{q}))$,then
    \begin{eqnarray*}
    \mathcal{L}_{\Gamma}F^{*}\theta_{Q}&=&\mathcal{L}_{\Gamma}(F_{i}dq^{i})=\Gamma(F_{i})dq^{i}
    +F_{i}d \dot{q}^{i}    \\
    &=&\left(\frac{\partial F_{i}}{\partial
    q^{j}}\dot{q}^{j}+\frac{\partial F_{i}}{\partial
    \dot{q}^{j}}\Gamma^{j}\right)dq^{i}+F_{i}d\dot{q}^{i}=\mu_{\Gamma, F}.
    \end{eqnarray*}
Then $\Omega$ trivially satisfies all the conditions in Theorem \ref{theorem-crampin}.

 $\Rightarrow$
From Theorem  \ref{theorem-crampin} we have that $\Gamma$ is variational if and only if there exists a non-degenerate 2-form $\Omega$ on $TQ$ satisfying   $\mathcal{L}_{\Gamma}\Omega=0$,
    $\Omega (v, w)=0$ for all $v, w\in V(TQ)$ and
     $d\Omega=0$.
From the last condition we deduce that locally $\Omega=d\Theta$ on a neighborhood $U\subseteq TQ$, where $\Theta$ is a $1$-form on $U$.
The restriction of $d\Theta$ to vertical subspaces is zero. Thus the restriction of $\Theta$ to each fiber is exact, then there is a function $f: U\rightarrow \R$ such that
$\Theta(v)=\langle df, v\rangle$ for any $v\in V(TQ)$.
Therefore,
$\widetilde\Theta=\Theta-df$ verifies $\widetilde{\Theta}(v)=0$ for all $v\in V(TQ)$ and $d\widetilde{\Theta}=\Omega$.
Using $\tilde{\Theta}$ we construct the map $F: U\subseteq TQ\rightarrow T^*Q$ as follows:
$$
\langle F(v_{q}), w_{q} \rangle = \langle \widetilde{\Theta}(v_{q}) , W_{q} \rangle, 
$$
where $v_{q}\in TQ$, $w_{q}\in TQ$ and $W_{q}\in TTQ$ satisfies $T\tau_{Q}(W_{q})=w_{q}$. This definition does not depend on the choice of $W_q$ since $\widetilde{\Theta}$ vanishes on vertical vector fields.
Then, it is easy to show that $\widetilde{\Theta}=F^*\theta_Q$ and from equation  (\ref{useful-eq1}), $\mu_{\Gamma, F}={\mathcal L}_{\Gamma}\widetilde{\Theta}$ verifies
\[
d\mu_{\Gamma, F}=d{\mathcal L}_{\Gamma}\widetilde{\Theta}={\mathcal L}_{\Gamma}d\widetilde{\Theta}={\mathcal L}_{\Gamma}\Omega=0.
\]
Hence $\hbox{Im}(\mu_{\Gamma, F})$ is a Lagrangian submanifold of $(T^*TQ, \omega_{TQ})$. Note that the non-degeneracy of $\Omega$ implies that $\det\left(\frac{\partial F_{i}}{\partial \dot{q}^{j}}\right)\not=0$ which is precisely the condition for $F$ to be a local diffeomorphism.
\end{proof}

    Observe that the  submanifold $\Sigma_{\Gamma, F}\subset T^{*}TQ$, given in local  coordinates by
    $\displaystyle{\left(q^{i},\dot{q}^{i},
    \frac{\partial F_{i}}{\partial q^{j}}\dot{q}^{j}+\frac{\partial F_{i}}{\partial
    \dot{q}^{j}}\Gamma^{j},F_{i}\right)}$ is a Lagrangian submanifold  of $(T^*TQ, \omega_{TQ})$  if and only if
    there exists a locally defined function $L\colon TQ \rightarrow \mathbb{R}$ such that
    $$\frac{\partial L}{\partial \dot{q}^{i}}=F_{i}\   \hbox{   and   } \ \frac{\partial L}{\partial q^{i}}=
    \frac{\partial F_{i}}{\partial q^{j}}\dot{q}^{j}+\frac{\partial F_{i}}{\partial
    \dot{q}^{j}}\Gamma^{j}.$$

We have that
    \begin{eqnarray*}
    \frac{d}{dt}\left(\frac{\partial L}{\partial
    \dot{q}^{i}}\right)-\frac{\partial L}{\partial q^{i}}&=&
    \frac{d F_i}{dt}-\frac{\partial F_{i}}{\partial q^{j}}\dot{q}^{j}-\frac{\partial F_{i}}{\partial
    \dot{q}^{j}}\Gamma^{j}
   \\
    &=&
    \frac{\partial F_{i}}{\partial q^{j}}\dot{q}^{j}+\frac{\partial F_{i}}{\partial
    \dot{q}^{j}}\ddot{q}^{j}-\frac{\partial F_{i}}{\partial q^{j}}\dot{q}^{j}-\frac{\partial F_{i}}{\partial
    \dot{q}^{j}}\Gamma^{j}\\
    &=&
    \frac{\partial F_{i}}{\partial
    \dot{q}^{j}}(\ddot{q}^{j}-\Gamma^{j}).
    \end{eqnarray*}
   Thus the
    solutions to Euler-Lagrange equations for $L$ coincide with
    the solutions to the SODE $\Gamma$,
    since  $F$ is a local diffeomorphism, that is, locally the matrix
    $\left(\frac{\partial F_{i}}{\partial \dot{q}^{j}}\right)$ is non-degenerate. Then the multipliers for the Helmholtz conditions are $g_{ij}=\frac{\partial F_{i}}{\partial \dot{q}^{j}}$. 

\begin{remark}
Since $\alpha_Q: TT^*Q\rightarrow T^*TQ$ is a symplectomorphism (see Section \ref{Sec:Tulczy}) then we can alternatively characterize the inverse problem of the calculus of variations  for a SODE  $\Gamma$  seeing  whether the submanifold  $S_{\Gamma, F}$ defined by 
\[
S_{\Gamma, F}=TF(\Gamma(Q))=\alpha_{Q}^{-1}(\mu_{\Gamma, F}(Q))
\]
 is a Lagrangian submanifold of the symplectic manifold $(TT^*Q, d_T\omega_Q)$. 
\end{remark}

\begin{remark}
The submanifold $\Sigma_{F,\Gamma}$ will be Lagrangian if
    \begin{eqnarray*}
    d\left(\left(\frac{\partial F_{i}}{\partial q^{j}}\dot{q}^{j}
    +\frac{\partial F_{i}}{\partial \dot{q}^{j}}\Gamma^{j}\right)dq^{i}+F_{i}d\dot{q}^{i}\right)&=&0.\\
    \end{eqnarray*}
Equivalently, we get the following conditions:
    \begin{eqnarray}
    \frac{\partial F_{i}}{\partial \dot{q}^{k}}&=&\frac{\partial F_{k}}{\partial
    \dot{q}^{i}}, \label{L11} \\
    \frac{\partial^{2}F_{i}}{\partial q^{k}\partial q^{j}}\dot{q}^{j}
    +\frac{\partial^{2}F_{i}}{\partial q^{k}\partial \dot{q}^{j}}\Gamma^{j}
    +\frac{\partial F_{i}}{\partial \dot{q}^{j}}\frac{\partial \Gamma^{j}}{\partial q^{k}}
    &=&\frac{\partial^{2} F_{k}}{\partial q^{i}\partial q^{j}}\dot{q}^{j}
    +\frac{\partial^{2} F_{k}}{\partial q^{i}\partial \dot{q}^{j}}\Gamma^{j}
    +\frac{\partial F_{k}}{\partial \dot{q}^{j}}\frac{\partial \Gamma^{j}}{\partial
    q^{i}}, \label{L22} \\
    \frac{\partial F_{k}}{\partial q^{i}}&=&
    \frac{\partial^{2} F_{i}}{\partial \dot{q}^{k}\partial q^{j}}\dot{q}^{j}
    +\frac{\partial F_{i}}{\partial q^{k}}
    +\frac{\partial^{2}F_{i}}{\partial \dot{q}^{k}\partial \dot{q}^{j}}\Gamma^{j}
    +\frac{\partial F_{i}}{\partial \dot{q}^{j}}\frac{\partial \Gamma^{j}}{\partial \dot{q}^{k}}. \label{L33}
    \end{eqnarray} 
In Appendix~\ref{Appendix} detailed computations show the equivalence between the equations (\ref{L11}), (\ref{L22}) and (\ref{L33}) and the Helmholtz conditions~\eqref{eq:Helmholtz1},~\eqref{eq:HelmholtzNabla},~\eqref{eq:HelmholtzPhi} for $g_{ij}=\frac{\partial F_{i}}{\partial \dot{q}^{j}}$.
\end{remark}
		
\begin{remark}
If we admit that the matrix $(g_{ij})$ is degenerate, then we get conditions for the existence of a singular Lagrangian $L$ such that 
$$
g_{ij}(\ddot{q}^{j}-\Gamma^{j})=\frac{d}{dt}\lp\frac{\partial L}{\partial \dot{q}^{i}}\rp-\frac{\partial L}{\partial q^{i}},
$$  
which implies that the solutions of the SODE are also solutions to the Euler-Lagrange equations for $L$.

\end{remark}


\begin{ex}
Let $Q=\mathbb{R}^2$, $\Gamma$ be given by $\ddot{x}=f(x,y),\ \ddot{y}=f(x,y)$, that is, $\Gamma^{1}=\Gamma^{2}=f(x,y)$. Then $L=\frac{1}{2}(\dot{x}-\dot{y})^{2}$ is a singular Lagrangian that gives the dynamics $\ddot{x}=\ddot{y}$, which includes the solutions to $\Gamma$, and satisfies
$$
g_{ij}(\ddot{q}^{j}-\Gamma^{j})=\frac{d}{dt}\lp\frac{\partial L}{\partial \dot{q}^{i}}\rp-\frac{\partial L}{\partial q^{i}}
$$
with $g_{11}=g_{22}=1$ and $g_{12}=g_{21}=-1$. For some choices of $f(x,y)$, the SODE will fall into one of the cases in \cite{Douglas} which do not admit a regular Lagrangian. For instance if we take $f(x,y)=xy$, then, in the notation of \cite{Douglas} (except for the coordinates which we denote as $(t,x,y,\dot{x},\dot{y})$), we get
$$
\begin{array}{ccc}
\begin{array}{rcl}
A&=&-2x,\\
A_{1}&=&-2\dot{x},\\
A_{2}&=&-2xy,\\
\end{array}
&
\begin{array}{rcl}
B&=&(y-x),\\
B_{1}&=&2(\dot{y}-\dot{x}),\\
B_{2}&=&0,
\end{array}
&
\begin{array}{rcl}
C&=&2y;\\
C_{1}&=&2\dot{y};\\
C_{2}&=&2xy.\\
\end{array}\\
\end{array}
$$

Then the determinant of  $\left(\begin{smallmatrix} A & B & C \\ A_{1} & B_{1} & C_{1} \\ A_{2} & B_{2} & C_{2} \end{smallmatrix}\right)$ is nonzero and the example falls into the nonvariational Case IV of Douglas~\cite{Douglas}.
\end{ex}

\subsection{Chaplygin hamiltonization}

As we have seen in Section \ref{chaplygin}, the equations of motion of a  noholonomic Chaplygin system can be reduced to a second-order differential equation on $Q/G$. Then, we can apply the inverse problem of the calculus of variations  in an attempt to find a Lagrangian $L: T(Q/G)\rightarrow \R$ such that equations (\ref{asr}) are  equivalent to the Euler-Lagrange equations for the Lagrangian  $L$.

Denote by $\Gamma$ the SODE on $T(Q/G)$  in equations (\ref{asr}). By Theorem~\ref{poi}, $\Gamma$ is equivalent to the Euler-Lagrange equations of a Lagrangian if there exists a fiber diffeomorphism $F: T(Q/G)\to T^*(Q/G)$ such that $\hbox{Im}(\mu_{\Gamma, F})$ is a Lagrangian submanifold of $(T^*T(Q/G), \omega_{T(Q/G)})$.  

 Equivalently,  in the case of Chaplygin systems we can use the reduced Lagrangian $L^*: T(Q/G)\rightarrow \R$ defined in Section \ref{chaplygin} and  its  associated Legendre transformation 
\[
Leg_{L^*}: T(Q/G)\rightarrow T^*(Q/G)\; .
\]
Then we can define the vector field $\widetilde{\Gamma}=(Leg_{L^*})_*\Gamma$ on $T^*(Q/G)$ representing the nonholonomic dynamics, now on the Hamiltonian side. 
But if there exists a solution $F: T(Q/G)\to T^*(Q/G)$ of the inverse problem of calculus of variations  then 
the vector field $F_*\Gamma$ is locally Hamiltonian. That is, locally there exists a function $\widehat{H}: T^*(Q/G)\rightarrow \R$ such that 
\[
i_{F_*\Gamma}\omega_{Q/G}=d\hat{H}.
\]
Therefore, if we consider the diffeomorphism
$G: T^*(Q/G)\to T^*(Q/G)$ given by $G=F\circ (Leg_{L^*})^{-1}$ then 
it is clear by construction that $G_*\widetilde{\Gamma}=F_*\Gamma$ and 
\begin{equation}\label{hamiltonization}
i_{\widetilde{\Gamma}}\Omega=d\hat{H},
\end{equation}
where $\Omega=G^*(\omega_{Q/G})$ and $\hat{H}=H\circ G$. 
Equation (\ref{hamiltonization}) corresponds to the standard notion of hamiltonization of a Chaplygin system~\cite{2010BalNar,2009BlochFerMestdag}.

$$
\xymatrix{
TT^{*}(Q/G) \ar[rrrr]^{TG} & & & & TT^{*}(Q/G) \\
&&&& \\
T^{*}(Q/G) \ar[uu]^{\widetilde{\Gamma}=(Leg_{L^*})_*\Gamma} \ar@/_2pc/[rrrr]^{G} & & T(Q/G) \ar[rr]^{F} \ar[ll]_{Leg_{L^*}} & & T^{*}(Q/G) \ar[uu]_{X_{\hat{H}}=F_*\Gamma} \\
}
$$
\subsection{A new geometric characterization for the time-dependent inverse problem}\label{Sec:NewTime}
Now we consider a non-autonomous second order differential system of the form
\begin{eqnarray}
\ddot{q}^{j}=\Gamma^{j}(t,q^{i},\dot{q}^{i}). \label{eq-time}
\end{eqnarray}
We want to characterize when a regular time-dependent Lagrangian $L(t,q,\dot{q})$ exists such that the solutions of the corresponding Euler-Lagrange equations coincide with the solutions of the system \eqref{eq-time}. Finding a regular Lagrangian is equivalent to finding a multiplier matrix $(g_{ij}(t,q,\dot{q}))$ satisfying the Helmholtz conditions for time-dependent SODE's, which can be written as in the previous section~\eqref{eq:Helmholtz1},~\eqref{eq:HelmholtzNabla},~\eqref{eq:HelmholtzPhi}, but now $\Gamma=\frac{\partial}{\partial t}+\dot{q}^{i}\frac{\partial}{\partial q^{i}}+\Gamma^{i}\frac{\partial}{\partial \dot{q}^{i}}$.
In \cite{Douglas}, Douglas solved this problem for the two dimensional case. He thoroughly analyzed the Helmholtz conditions using Riquier theory to give a classification of variational and nonvariational SODE's in terms of conditions that depend only on $\Gamma^{j}$ and some of its partial derivatives.

\begin{defn} \label{time-SODE}
A vector field $\Gamma$ on $\mathbb{R}\times TQ$ is a SODE if $\langle \Gamma, \theta^{i} \rangle=0$ and $\langle \Gamma , dt \rangle=1$, where $\theta^{i}=dq^{i}-\dot{q}^{i}dt$ are the usual contact $1$-forms. In local coordinates $(t,q,\dot{q})$ for $\mathbb{R}\times TQ$,
$$
\Gamma=\frac{\partial}{\partial t}+\dot{q}^{i}\frac{\partial}{\partial q^{i}}+\Gamma^{i}\frac{\partial}{\partial \dot{q}^{i}}.
$$
The integral curves of $\Gamma$ are the ones satisfying the system of explicit second order differential equations $\ddot{q}^{i}=\Gamma^{i}(t,q,\dot{q})$.
\end{defn}

\begin{remark} \label{Rem:ThetaL}
An example of SODE on $\mathbb{R}\times TQ$ is the Euler-Lagrange vector field associated to a regular Lagrangian function $L:\mathbb{R}\times TQ\longrightarrow \mathbb{R}$, which is defined as the unique vector field $\Gamma$ satisfying $i_{\Gamma}\Omega_{L}=0$ and $\langle \Gamma , dt \rangle=1$, where $\Omega_{L}=-d\theta_{L}$ is the Cartan $2$-form, $\theta_{L}=Ldt+dL\circ S$ is the Cartan $1$-form and $S=\frac{\partial}{\partial\dot{q}^{i}}\otimes\theta^{i}$. Note that $(\Omega_{L},dt)$ provides $\mathbb{R}\times TQ$ with a cosymplectic structure if $L$ is regular \cite{1992Cantrijn}.
\end{remark}

In \cite{84CPT} an alternative characterization analogous to the one in \cite{81Crampin} is given for the time-dependent case:

\begin{thm} \cite{84CPT} \label{time-crampin}
A SODE $\Gamma$ on $\mathbb{R}\times TQ$ is variational if and only if there exists a $2$-form $\Omega$ on 
$\mathbb{R}\times TQ$ of maximal rank such that
\begin{enumerate}
\item $d\Omega=0$,
\item $\Omega(v_{1},v_{2})=0\; ,\ \forall v_{1},v_{2}\in V(\mathbb{R}\times TQ)$,
\item $i_{\Gamma}\Omega=0$.
\end{enumerate}
\end{thm}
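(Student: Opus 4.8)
The plan is to transpose the argument behind Theorem~\ref{theorem-crampin} (and the proof of Theorem~\ref{poi}) to the cosymplectic setting of $\mathbb{R}\times TQ$, using the Cartan $2$-form $\Omega_L$ of Remark~\ref{Rem:ThetaL} in place of the canonical symplectic form.

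For the direct implication, assume $\Gamma$ is variational, so that $\Gamma$ is the Euler--Lagrange vector field of a regular Lagrangian $L\colon\mathbb{R}\times TQ\to\mathbb{R}$, and take $\Omega=\Omega_L=-d\theta_L$ with $\theta_L=L\,dt+dL\circ S$. Then (1) holds because $\Omega_L$ is exact; (3) is exactly the defining equation $i_\Gamma\Omega_L=0$ of the Euler--Lagrange vector field (Remark~\ref{Rem:ThetaL}); and the maximal rank of $\Omega_L$ is equivalent to the regularity of $L$, since $(\Omega_L,dt)$ is then cosymplectic. For (2), I would use that $\theta_L=(L-\dot q^i\tfrac{\partial L}{\partial\dot q^i})\,dt+\tfrac{\partial L}{\partial\dot q^i}\,dq^i$ is semibasic over $\mathbb{R}\times Q$; a short computation then shows that $d\theta_L$ has no $d\dot q^i\wedge d\dot q^j$ terms, so $\Omega_L$ vanishes on pairs of vertical vectors $v_1,v_2\in V(\mathbb{R}\times TQ)$.

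The substance of the proof is the converse. Given $\Omega$ satisfying (1)--(3), I would first apply the Poincar\'e lemma to (1) to write $\Omega=-d\theta$ on a contractible neighbourhood. Condition (2) says that $d\theta=-\Omega$ restricts to zero on the fibres of $\mathbb{R}\times TQ\to\mathbb{R}\times Q$, so $\theta$ is fibrewise closed, hence (after possibly shrinking the neighbourhood) fibrewise exact: there is a function $f$ such that $\tilde\theta:=\theta-df$ is semibasic and still $-d\tilde\theta=\Omega$. Writing $\tilde\theta=\theta_0\,dt+p_i\,dq^i$ in coordinates $(t,q^i,\dot q^i)$, I define $L:=i_\Gamma\tilde\theta=\theta_0+\dot q^i p_i$, using $\langle\Gamma,dt\rangle=1$ and $\langle\Gamma,dq^i\rangle=\dot q^i$ from Definition~\ref{time-SODE}. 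The key manipulation is to rewrite (3) via Cartan's formula as $\mathcal{L}_\Gamma\tilde\theta=d(i_\Gamma\tilde\theta)=dL$; expanding $\mathcal{L}_\Gamma\tilde\theta=\Gamma(\theta_0)\,dt+\Gamma(p_i)\,dq^i+p_i\,d\dot q^i$ and comparing the $d\dot q^i$-components of the two sides forces $p_i=\partial L/\partial\dot q^i$, whence $\tilde\theta=L\,dt+\tfrac{\partial L}{\partial\dot q^i}\theta^i=\theta_L$ and $\Omega=\Omega_L$. Comparing the $dq^i$-components yields $\Gamma(p_i)=\partial L/\partial q^i$, i.e.\ the Euler--Lagrange equations of $L$ hold along the integral curves of $\Gamma$. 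Finally, the maximal-rank hypothesis on $\Omega=\Omega_L$, together with $\Gamma\in\ker\Omega_L$, forces $\det(\partial^2 L/\partial\dot q^i\partial\dot q^j)\neq0$, so $L$ is regular and $\Gamma$ is its (unique) Euler--Lagrange vector field; hence $\Gamma$ is variational.

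I expect the main obstacle to be the reduction from $\Omega$ to a semibasic primitive $\tilde\theta$: the Poincar\'e lemma is invoked twice (globally for $\Omega$, fibrewise for $\theta$), so the construction is purely local, and one has to be careful that it is the maximal-rank hypothesis --- and not merely conditions (1)--(3) --- that upgrades the reconstructed $L$ from a possibly singular Lagrangian (cf.\ the remarks following the proof of Theorem~\ref{poi}) to a regular one. The remaining verifications are routine bookkeeping with the coordinate expression of $\Gamma$ and the contact forms $\theta^i=dq^i-\dot q^i\,dt$.
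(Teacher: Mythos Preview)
The paper does not supply its own proof of this statement: Theorem~\ref{time-crampin} is quoted verbatim from \cite{84CPT} and used as a black box in the proof of Theorem~\ref{Teop}. So there is no proof in the paper to compare against.

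That said, your argument is correct and is precisely the cosymplectic transcription of the techniques the paper does develop in the autonomous case. The reduction from an arbitrary primitive $\theta$ to a semibasic $\tilde\theta=\theta-df$ is the same manoeuvre as in the proof of Theorem~\ref{poi}, and your key step---using $i_\Gamma\Omega=0$ to rewrite Cartan's formula as $\mathcal{L}_\Gamma\tilde\theta=d(i_\Gamma\tilde\theta)=dL$ with $L:=i_\Gamma\tilde\theta$---is the time-dependent analogue of equation~\eqref{useful-eq1}. The identification $\theta_0=L-\dot q^i\partial L/\partial\dot q^i$ then gives $\tilde\theta=\theta_L$ exactly as you say.

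The only point I would make explicit is the final regularity step. Since $\dim(\mathbb{R}\times TQ)=2n+1$ is odd, ``maximal rank'' means $\operatorname{rank}\Omega=2n$, i.e.\ $\Omega^n\wedge dt\neq0$. Once you have $\Omega=\Omega_L$, the only $d\dot q^i\wedge dq^j$-terms in $\Omega_L$ come with coefficients $\partial^2L/\partial\dot q^i\partial\dot q^j$, and a short computation shows $\Omega_L^n\wedge dt$ is a nonzero multiple of $\det(\partial^2L/\partial\dot q^i\partial\dot q^j)\,d\dot q^1\wedge\cdots\wedge d\dot q^n\wedge dq^1\wedge\cdots\wedge dq^n\wedge dt$; hence maximal rank is equivalent to regularity of $L$. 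This is routine but worth stating, since---as you note---conditions (i)--(iii) alone would allow $L$ to be singular.
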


Consider now the following diagram, where $F:\mathbb{R}\times TQ \longrightarrow \mathbb{R}\times T^{*}Q$ is a local diffeomorphism over $\mathbb{R}\times Q$:

$$
\xymatrix{
		T(\mathbb{R}\times TQ) \ar[r]^/-20pt/{TF} & T(\mathbb{R}\times T^{*}Q)\cong T\mathbb{R}\times TT^{*}Q \\	
	  &&&\\
		\mathbb{R}\times TQ \ar[r]^{F}  \ar[uu]^{\Gamma} \ar[uur]_{\quad \gamma_{\Gamma,F}:=TF \circ\Gamma} & \mathbb{R}\times T^{*}Q. &\\
    }
 $$

In local coordinates, if we write $\Gamma(t,q^{i},\dot{q}^{i})= (t,q^{i},\dot{q}^{i},1,\dot{q}^{i},\Gamma^{i}(t,q^{j},\dot{q}^{j}))$ and $F(t,q^{i},\dot{q}^{i})=(t,q^{i},F_{i}(t,q,\dot{q}))$, then we get
$$
\gamma_{\Gamma,F}(t,q^{i},\dot{q}^{i})=\left(t,q^{i},F_{i}(t,q,\dot{q}),1,\dot{q}^{i},\frac{\partial F_{i}}{\partial t}+\dot{q}^{j}\frac{\partial F_{i}}{\partial q^{j}}+\Gamma^{j}\frac{\partial F_{i}}{\partial \dot{q}^{j}}\right).
$$

To characterize the variational property of the time-dependent SODE, we need the following definitions. Let $f$ be a function on $P$, $f^c$ and $f^v$ denote respectively the complete and vertical lift of the function $f$ to $TP$, see~\cite{Yano}. These lifts are defined as follows:
\begin{eqnarray*}
f^c&=&\Delta( f\circ \tau_P), \quad \Delta \mbox{ is the Liouville vector field},\\
f^v&=&f\circ \tau_P, 
\end{eqnarray*}
where $ \tau_P\colon TP \rightarrow P$ is the canonical projection. 
\begin{defn}[\cite{1990Courant},\cite{Grabowski}]
Let $(P,\left\{,\right\})$ be a Poisson manifold. The tangent Poisson bracket is given by
\begin{eqnarray*}
\left\{ f^{c}, g^{c} \right\}^{T}&=&\left\{f,g\right\}^{c},\\
\left\{ f^{c}, g^{v} \right\}^{T}&=&\left\{f,g\right\}^{v},\\
\left\{ f^{v}, g^{v} \right\}^{T}&=&0.
\end{eqnarray*}
If $(x^{i})$ denote local coordinates in $P$ and the Poisson bivector is given by
$$
\Lambda=\frac{1}{2}\Lambda^{ij}(x)\frac{\partial}{\partial x^{i}}\wedge \frac{\partial}{\partial x^{j}},
$$
then
$$
d_{T}\Lambda=\Lambda^T:=\Lambda^{ij}(x)\frac{\partial}{\partial x^{i}}\wedge \frac{\partial}{\partial\dot{x}^{j}}+\frac{1}{2}\frac{\partial\Lambda^{ij}(x)}{\partial x^{k}}\dot{x}^{k}\frac{\partial}{\partial\dot{x}^{i}}\wedge \frac{\partial}{\partial\dot{x}^{j}}
$$
is the Poisson bivector corresponding to the bracket $\{ \cdot,\cdot\}^T$.
\end{defn}

\begin{defn}[\cite{Vaisman2}]
Let $(P,\left\{ ,\right\})$ be a Poisson manifold and $N$ be a submanifold of $P$. Denote by $\Lambda$ the Poisson bivector and by $\sharp:T^{*}P\longrightarrow TP$ the induced morphism of vector bundles. The submanifold $N$ is called Lagrangian if 
$$
\sharp(TN^{\circ})=TN\cap\mathcal{C},
$$
where $TN^{\circ}$ is the annihilator of $TN$ and $\mathcal{C}:=\hbox{Im}(\sharp)$ is the characteristic distribution.
\end{defn}

Now we consider the projection $\tilde{\pi}: T^*(\R\times Q)\equiv T^*\R\times T^*Q\rightarrow \R\times T^*Q$ given by $\tilde{\pi}=(\pi_{\R}, id_{T^*Q})$ that is
$\tilde{\pi}(\alpha_t, \beta_q)=(t, \beta_q)$ where $\alpha_t\in T_t^*\R$ and $\beta_q\in T^*_qQ$. We induce a Poisson  bracket on $\R\times T^*Q$ such that $\tilde{\pi}$ is a Poisson morphism where we are considering in $T^*(\R\times Q)$ the standard Poisson bracket induced by the symplectic 2-form $\omega_{\R\times Q}$. 
Locally, in coordinates $(t, q^i, p_i)$ for $\R\times T^*Q$ we have that the induced bracket $\{\; ,\; \}$ is defined by
 \[
 \{t, q^i\}=\{t, p_i\}=\{q^i, q^j\}=\{p_i, p_j\}=0 \ \hbox{and }   \left\{q^{i},p_{i}\right\}=1\; .
 \]
Then we take its tangent lift to $T(\mathbb{R}\times T^{*}Q)$, which is defined on the induced coordinate functions  $(t, q, p, v_t, \dot{q}, \dot{p})$ by
$$
\left\{ q^{i}, \dot{p}_{i} \right\}^{T}=1, \ \left\{ \dot{q}^{i}, p_{i}\right\}^{T}=1
$$
and the remaining Poisson brackets vanish. The variational property of $\Gamma$ will be characterized in terms of Lagrangian submanifolds for this Poisson structure. To be more precise, $\hbox{Im}(\gamma_{\Gamma,F})$ must  be Lagrangian for some $F$. 


Now we will write the conditions that arise when forcing $\hbox{Im}(\mu_{\Gamma,F})$ to be Lagrangian. In local coordinates $(t,q,p,v_{t},\dot{q},\dot{p})$ for $T(\mathbb{R}\times T^{*}Q)$ we have
\begin{eqnarray*}
T(\hbox{Im}(\gamma_{\Gamma,F}))&=&\hbox{span}\left\{ \frac{\partial}{\partial t}+\frac{\partial F_{j}}{\partial t}\frac{\partial}{\partial p_{j}}+\frac{\partial\Gamma(F_{j})}{\partial t}\frac{\partial}{\partial \dot{p}_{j}}, \right.\\ 
&&\frac{\partial}{\partial q^{i}}+\frac{\partial F_{j}}{\partial q^{i}}\frac{\partial}{\partial p_{j}}+\frac{\partial\Gamma(F_{j})}{\partial q^{i}}\frac{\partial}{\partial \dot{p}_{j}}, \left.\frac{\partial}{\partial \dot{q}^{i}}+\frac{\partial F_{j}}{\partial \dot{q}^{i}}\frac{\partial}{\partial p_{j}}+\frac{\partial\Gamma(F_{j})}{\partial\dot{q}^{i}}\frac{\partial}{\partial \dot{p}_{j}} \right\},\\
\end{eqnarray*}
$$
T(\hbox{Im}(\gamma_{\Gamma,F}))^{\circ}=\hbox{span}\left\{ dv_{t}, \frac{\partial F_{i}}{\partial q^{j}}dq^{j}-dp_{i}+\frac{\partial F_{i}}{\partial t}dt+\frac{\partial F_{i}}{\partial \dot{q}^{j}}d\dot{q}^{j}, \frac{\partial \Gamma(F_{i})}{\partial q^{j}}dq^{j}-d\dot{p}_{i}+\frac{\partial\Gamma(F_{i})}{\partial t}dt+\frac{\partial\Gamma(F_{i})}{\partial\dot{q}^{j}}d\dot{q}^{j} \right\},
$$
and
$$
\sharp(T(\hbox{Im}(\gamma_{\Gamma,F}))^{\circ})=\hbox{span}\left\{\frac{\partial F_{i}}{\partial q^{j}}\frac{\partial}{\partial\dot{p}_{j}}+\frac{\partial}{\partial \dot{q}^{i}}+\frac{\partial F_{i}}{\partial\dot{q}^{j}}\frac{\partial}{\partial p_{j}}, \frac{\partial \Gamma(F_{i})}{\partial q^{j}}\frac{\partial}{\partial \dot{p}_{j}}+\frac{\partial}{\partial q^{i}}+\frac{\partial \Gamma(F_{i})}{\partial \dot{q}^{j}}\frac{\partial}{\partial p_{j}} \right\}.
$$
As ${\mathcal C}={\rm span} \{dq^i,dp_i,d\dot{q}^i,d\dot{p}_i\}$, the equality $\sharp(T(\hbox{Im}(\gamma_{\Gamma,F}))^{\circ})=T(\hbox{Im}(\gamma_{\Gamma,F}))\cap\mathcal{C}$ holds if the following conditions are satisfied
\begin{equation}\label{time-eq}
\frac{\partial F_{j}}{\partial \dot{q}^{i}}=\frac{\partial F_{i}}{\partial\dot{q}^{j}}, \ \ \ 
\frac{\partial F_{i}}{\partial q^{j}}=\frac{\partial \Gamma(F_{j})}{\partial\dot{q}^{i}}, \ \ \ 
\frac{\partial\Gamma(F_{j})}{\partial q^{i}}=\frac{\partial\Gamma(F_{i})}{\partial q^{j}},
\end{equation}
which in more detail read
\begin{eqnarray}
\frac{\partial F_{j}}{\partial \dot{q}^{i}}&=&\frac{\partial F_{i}}{\partial \dot{q}^{j}}, \label{T1} \\
\frac{\partial^{2} F_{j}}{\partial \dot{q}^{i}\partial t}+\frac{\partial F_{j}}{\partial q^{i}}+\dot{q}^{k}\frac{\partial^{2}F_{j}}{\partial \dot{q}^{i}\partial q^{k}}+\frac{\partial \Gamma^{k}}{\partial \dot{q}^{i}}\frac{\partial F_{j}}{\partial \dot{q}^{k}}+\Gamma^{k}\frac{\partial^{2}F_{j}}{\partial \dot{q}^{i}\partial\dot{q}^{k}}&=&\frac{\partial F_{i}}{\partial q^{j}}, \label{T2} \\
\frac{\partial^{2} F_{j}}{\partial q^{i} \partial t}+\dot{q}^{k}\frac{\partial^{2}F_{j}}{\partial q^{i}\partial q^{k}}+\frac{\partial \Gamma^{k}}{\partial q^{i}}\frac{\partial F_{j}}{\partial \dot{q}^{k}}+\Gamma^{k}\frac{\partial^{2}F_{j}}{\partial q^{i}\partial \dot{q}^{k}}&=&\frac{\partial^{2}F_{i}}{\partial q^{j}\partial t}+\dot{q}^{k}\frac{\partial^{2}F_{i}}{\partial q^{j}\partial q^{k}}+\frac{\partial \Gamma^{k}}{\partial q^{j}}\frac{\partial F_{i}}{\partial \dot{q}^{k}}\nonumber \\ &&+\Gamma^{k}\frac{\partial^{2}F_{i}}{\partial q^{j}\partial \dot{q}^{k}}. \label{T3}
\end{eqnarray}

\begin{remark}
Note that the above conditions are the same that arise if we require that the natural projection of $\hbox{Im}(\gamma_{\Gamma,F})\subset T(\R\times T^*Q)$ onto $TT^{*}Q$ be a Lagrangian submanifold for each time coordinate with the symplectic structure $d_T\omega_{Q}$. 
\end{remark}

Now we give a characterization of the variational character of a time-dependent SODE in terms of Lagrangian submanifolds  of the Poisson manifold $(T(\R\times T^*Q), \{\; , \; \}^T)$. 
\begin{thm}\label{Teop}
A SODE $\Gamma$ on $\mathbb{R}\times TQ$ is variational if and only if there is a local diffeomorphism $F:\mathbb{R}\times TQ \longrightarrow \mathbb{R}\times T^{*}Q$ over $\mathbb{R}\times Q$ such that $\hbox{Im}(\gamma_{\Gamma,F})$ is a Lagrangian submanifold of $(T(\mathbb{R}\times T^{*}Q),\left\{, \right\}^T)$.
\end{thm}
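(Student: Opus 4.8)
The plan is to mirror the proof of Theorem~\ref{poi}, replacing the autonomous criterion by its time-dependent analogue in Theorem~\ref{time-crampin} and using the coordinate conditions \eqref{T1}--\eqref{T3}, equivalently \eqref{time-eq}, which by the discussion preceding the statement are exactly the conditions for $\hbox{Im}(\gamma_{\Gamma,F})$ to be a Lagrangian submanifold of $(T(\mathbb{R}\times T^{*}Q),\{\,,\,\}^{T})$.

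For $\Leftarrow$, I would start from a local diffeomorphism $F$ over $\mathbb{R}\times Q$ such that $\hbox{Im}(\gamma_{\Gamma,F})$ is Lagrangian, so that \eqref{time-eq} holds. The three relations in \eqref{time-eq} are precisely the closedness conditions for the $1$-form $\Gamma(F_{i})\,dq^{i}+F_{i}\,d\dot{q}^{i}$ on each slice $\{t=\mathrm{const}\}$ of $\mathbb{R}\times TQ$; hence, by the Poincar\'e lemma, there is a local function $L\colon\mathbb{R}\times TQ\to\mathbb{R}$ with $\partial L/\partial\dot{q}^{i}=F_{i}$ and $\partial L/\partial q^{i}=\Gamma(F_{i})$ (the $t$-direction imposes no further condition). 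Since $F$ is fibrewise a local diffeomorphism, the Hessian $(\partial^{2}L/\partial\dot{q}^{i}\partial\dot{q}^{j})=(\partial F_{i}/\partial\dot{q}^{j})$ is non-singular, so $L$ is regular; and as $\Gamma(\partial L/\partial\dot{q}^{i})=\Gamma(F_{i})=\partial L/\partial q^{i}$, the integral curves of $\Gamma$ satisfy the Euler--Lagrange equations of $L$, i.e. $\Gamma$ is the Euler--Lagrange field of $L$. Consequently $\Omega:=\Omega_{L}=-d\theta_{L}$ has maximal rank, is closed, vanishes on vertical vectors (because $\theta_{L}=F_{i}\,dq^{i}+(L-\dot{q}^{i}F_{i})\,dt$ has no $d\dot{q}^{i}$ term) and satisfies $i_{\Gamma}\Omega=0$, so Theorem~\ref{time-crampin} yields that $\Gamma$ is variational.

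For $\Rightarrow$, I would invoke Theorem~\ref{time-crampin} to obtain a maximal-rank $2$-form $\Omega$ on $\mathbb{R}\times TQ$ with $d\Omega=0$, $\Omega(v_{1},v_{2})=0$ for vertical $v_{1},v_{2}$, and $i_{\Gamma}\Omega=0$ (so also $\mathcal{L}_{\Gamma}\Omega=i_{\Gamma}d\Omega+d\,i_{\Gamma}\Omega=0$). By the Poincar\'e lemma, locally $\Omega=d\Theta$; because $\Omega$ restricts to zero on the vertical leaves, $\Theta$ restricted to each such leaf is exact, so subtracting a suitable $df$ gives a $1$-form $\widetilde{\Theta}=\Theta-df$ that vanishes on vertical vectors and still satisfies $d\widetilde{\Theta}=\Omega$. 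As $\widetilde{\Theta}$ then has no $d\dot{q}^{i}$ component we may write $\widetilde{\Theta}=F_{i}\,dq^{i}+g\,dt$, and we set $F(t,q^{i},\dot{q}^{i}):=(t,q^{i},F_{i}(t,q,\dot{q}))$. Using $i_{\Gamma}\Omega=0$, which forces $\partial g/\partial\dot{q}^{i}=-\dot{q}^{j}\,\partial F_{j}/\partial\dot{q}^{i}$, one verifies that $L:=g+\dot{q}^{i}F_{i}$ satisfies $\partial L/\partial\dot{q}^{i}=F_{i}$; hence $\widetilde{\Theta}=\theta_{L}$ and $\Omega=-\Omega_{L}$, and the maximal rank of $\Omega$ forces $L$ to be regular, so $F$ is a local diffeomorphism over $\mathbb{R}\times Q$. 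Finally, $i_{\Gamma}\Omega_{L}=0$ together with $\langle\Gamma,dt\rangle=1$ identifies $\Gamma$ with the Euler--Lagrange field of $L$, so $\Gamma(F_{i})=\partial L/\partial q^{i}$; differentiating this identity in $q^{j}$ and $\dot{q}^{j}$ recovers \eqref{time-eq}, and therefore $\hbox{Im}(\gamma_{\Gamma,F})$ is Lagrangian in $(T(\mathbb{R}\times T^{*}Q),\{\,,\,\}^{T})$.

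The step I expect to cause the most trouble is, in the direction $\Rightarrow$, converting the abstract ``maximal rank'' of $\Omega$ into non-degeneracy of the fibre derivative $(\partial F_{i}/\partial\dot{q}^{j})$, i.e. regularity of the reconstructed Lagrangian $L$, while tracking the extra $dt$-direction that does not appear in Theorem~\ref{poi}. Concretely, proving that the function $g$ arising from the splitting $\widetilde{\Theta}=F_{i}\,dq^{i}+g\,dt$ is exactly the one making $\widetilde{\Theta}$ coincide with the Cartan $1$-form $\theta_{L}$ is the point where the hypothesis $i_{\Gamma}\Omega=0$, rather than only $\mathcal{L}_{\Gamma}\Omega=0$, is essential.
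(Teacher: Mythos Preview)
Your argument is correct, but the two directions take routes that are essentially swapped relative to the paper's proof.

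For $\Rightarrow$, the paper simply uses that ``variational'' means there is a regular Lagrangian $L$, sets $F_{i}=\partial L/\partial\dot{q}^{i}$, and computes $\gamma_{\Gamma,F}(t,q,\dot{q})=(t,q^{i},\partial L/\partial\dot{q}^{i},1,\dot{q}^{i},\partial L/\partial q^{i})$, from which the Lagrangian-submanifold conditions \eqref{time-eq} are immediate. You instead invoke Theorem~\ref{time-crampin} to obtain the abstract $\Omega$, then reconstruct $\widetilde{\Theta}$, $F$ and finally $L$ from it; this works, but it is effectively reproving one direction of Theorem~\ref{time-crampin} and is considerably longer than the paper's two-line argument. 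Your concern about extracting regularity from the ``maximal rank'' of $\Omega$ disappears in the paper's approach, since $L$ is given from the start.

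For $\Leftarrow$, the paper does \emph{not} reconstruct $L$: it defines directly
\[
\Omega=-dF^{*}\theta_{Q}-i_{\Gamma}dF^{*}\theta_{Q}\wedge dt
\]
and checks in coordinates that \eqref{time-eq} forces $d\Omega=0$, that $i_{\Gamma}\Omega=0$ holds identically, and that the term $\tfrac{\partial F_{i}}{\partial\dot{q}^{j}}\,d\dot{q}^{j}\wedge dq^{i}$ gives maximal rank because $F$ is a local diffeomorphism; then Theorem~\ref{time-crampin} applies. Your route---observing that \eqref{time-eq} is exactly the closedness of $\Gamma(F_{i})\,dq^{i}+F_{i}\,d\dot{q}^{i}$ on each $t$-slice and applying the Poincar\'e lemma to produce $L$ with $\partial L/\partial\dot{q}^{i}=F_{i}$ and $\partial L/\partial q^{i}=\Gamma(F_{i})$---is a genuinely different and in fact more transparent argument: it exhibits the Lagrangian explicitly and avoids the ad hoc formula for $\Omega$. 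The price is the (harmless) parametric use of the Poincar\'e lemma in $t$.

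In short: both proofs are valid; the paper's $\Rightarrow$ is shorter than yours, while your $\Leftarrow$ is arguably cleaner than the paper's coordinate verification.
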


\begin{proof}

$\Rightarrow$ If $\Gamma$ is variational  then there is a local regular Lagrangian $L:\mathbb{R}\times TQ\longrightarrow \mathbb{R}$ such that
$$
\frac{\partial^{2}L}{\partial\dot{q}^{i}\partial \dot{q}^{j}}(\ddot{q}^{j}-\Gamma^{j})=\frac{d}{dt}\left(\frac{\partial L}{\partial \dot{q}^{i}}\right)-\frac{\partial L}{\partial q^{i}},
$$
that is,
$$
\frac{\partial^{2}L}{\partial\dot{q}^{i}\partial\dot{q}^{j}}\Gamma^{j}=\frac{\partial L}{\partial q^{i}}-\frac{\partial^{2}L}{\partial t\partial\dot{q}^{i}}-\frac{\partial^{2}L}{\partial q^{j}\partial\dot{q}^{i}}\dot{q}^{j}.
$$
We can define $F_{i}=\frac{\partial L}{\partial \dot{q}^{i}}$, then
$$
\gamma_{\Gamma,F}(t,q^{i},\dot{q}^{i})=\left(t,q^{i},\frac{\partial L}{\partial\dot{q}^{i}},1,\dot{q}^{i},\frac{\partial L}{\partial q^{i}}\right),
$$
whose image is a Lagrangian submanifold of $(T(\mathbb{R}\times T^{*}Q),\left\{, \right\}^T)$.

$\Leftarrow$ Given a local diffeomorphism
$$
\begin{array}{lccc}
F: & \R\times TQ & \longrightarrow & \R\times T^{*}Q\\
& (t,q^{i},\dot{q}^{i}) & \longmapsto & (t,q^{i},F_{i})
\end{array}
$$
satisfying (\ref{T1}), (\ref{T2}) and (\ref{T3}), we define
$$
\Omega=-dF^{*}\theta_{Q}-i_{\Gamma}dF^{*}\theta_{Q}\wedge dt=-dF^{*}\theta_{Q}+(di_{\Gamma}F^{*}\theta_{Q}-\mathcal{L}_{\Gamma}F^{*}\theta_{Q})\wedge dt,
$$
which clearly satisfies $\Omega(v_{1},v_{2})=0$ for all $v_{1}, v_{2} \in V(\R\times TQ)$.
In local coordinates
$$
\Omega=-\frac{\partial F_{i}}{\partial q^{j}}dq^{j}\wedge dq^{i}-\frac{\partial F_{i}}{\partial \dot{q}^{j}}d\dot{q}^{j}\wedge dq^{i}+
\left(\frac{\partial F_{i}}{\partial q^{j}}\dot{q}^{i}-\frac{\partial F_{j}}{\partial q^{i}}\dot{q}^{i}-\frac{\partial F_{j}}{\partial \dot{q}^{i}}\Gamma^{i}\right)dq^{j}\wedge dt +\frac{\partial F_{i}}{\partial \dot{q}^{j}}\dot{q}^{i}d\dot{q}^{j}\wedge dt.
$$
Computing the exterior derivative of $\Omega$ we get
$$
d\Omega=-\frac{\partial\Gamma(F_{i})}{\partial q^{j}}dq^{j}\wedge dq^{i}\wedge dt-\frac{\partial\Gamma(F_{i})}{\partial \dot{q}^{j}}d\dot{q}^{j}\wedge dq^{i}\wedge dt-\frac{\partial F_{i}}{\partial q^{j}}dq^{j}\wedge d\dot{q}^{i}\wedge dt-\frac{\partial F_{i}}{\partial \dot{q}^{j}}d\dot{q}^{j}\wedge d\dot{q}^{i}\wedge dt.
$$
Conditions~\eqref{time-eq} on $F$ yield $d\Omega=0$. It is also readily checked that $i_{\Gamma}\Omega=0$. Since $F$ is a local diffeomorphism, that is, rank$\left(\frac{\partial F_{i}}{\partial \dot{q}^{j}}\right)=n$, the term $\frac{\partial F_{i}}{\partial \dot{q}^{j}}d\dot{q}^{j}\wedge dq^{i}$ makes $\Omega$ have maximal rank. Thus $\Omega$ satisfies all the conditions in Theorem \ref{time-crampin} and $\Gamma$ is variational.
\end{proof}

\begin{remark}
Note that the Cartan $2$-form $\Omega_L$  (see Remark~\ref{Rem:ThetaL}) can be alternatively rewritten as  
 $\Omega_{L}=\omega+dE_{L}\wedge dt$, with $\omega=-d\left(dL\circ S-(i_{\frac{\partial}{\partial t}}dL\circ S)dt\right)$ and $E_{L}=\Delta(L)-L$.
 If we consider the Legendre transformation
 $Leg_L: \R\times TQ\rightarrow \R\times T^*Q$ locally given by $$Leg_L(t, q, \dot{q})=(t, q, \frac{\partial L}{\partial \dot{q}^i})$$ then 
 $\omega=-d (Leg_L)^{*}\theta_{Q}$, $\Delta(L)=i_{\Gamma_L}(Leg_L)^{*}\theta_{Q}$ and $dL\wedge dt=\mathcal{L}_{\Gamma_L}(Leg_L)^{*}\theta_{Q}\wedge dt$. These substitutions motivate the definition of $\Omega$ in the proof of Theorem \ref{Teop} above for an $F$ and $\Gamma$ arbitrary instead of $Leg_L$ and $\Gamma_L$. For more on the formulation of time-dependent Lagrangian mechanics see \cite{84CPT,ranada} .
\end{remark}

\begin{remark}
If we replace the trivial bundle $\mathbb{R}\times Q\longrightarrow \mathbb{R}$ by an arbitrary fiber bundle $\pi:E\longrightarrow \R$, then the first jet manifold, denoted by $J^{1}E$ is the generalization of $\mathbb{R}\times TQ$. The generalization of $\mathbb{R}\times T^{*}Q$ is $V^{*}\pi$, the dual bundle of the vertical bundle to $\pi$. $V^{*}\pi$ is also equipped with a Poisson structure that can be lifted to $TV^{*}\pi$. The picture in this case is as follows:
$$
\xymatrix{
TJ^{1}\pi \ar[rr]^{TF} & & TV^{*}\pi \\
\\
J^{1}\pi \ar[rr]^{F} \ar[uu]^{\Gamma} \ar[rruu]_{\gamma_{\Gamma,F}=TF\circ\Gamma} & & V^{*}\pi
}
$$
Now the variationality of $\Gamma$ could also be studied in terms of $\hbox{Im}(\gamma_{\Gamma,F})$ being Lagrangian in $(TV^{*}\pi,\left\{,\right\}^{T})$ \cite{GrKaGra2004}.

\end{remark}

\section{The inverse problem for constrained systems} \label{constraints-section}

In this section, we will study the extension of the inverse problem of the calculus of variations to the case of constrained systems. Consider a  submanifold $M$ of $TQ$ and a vector field $\Gamma$ on $M$ verifying the SODE condition, that is,
\[
S_x(\Gamma(x))=\Delta(x), \quad \forall x\in M.
\]
Nonholonomic mechanics is an example of this situation, as we will see later.

From now on, we assume that $M$ projects over the whole configuration manifold $Q$. 
Inspired by Theorem \ref{poi} we give the following definition:

\begin{defn}\label{Dfn:VarConstraint}
 A SODE $\Gamma$ on the submanifold $M$ of $TQ$ is variational if there exists an immersion $F: M\rightarrow T^*Q$ over $Q$ such that $\Sigma_{\Gamma,F}:=\hbox{Im}(\mu_{\Gamma, F})$ is an isotropic submanifold of $(T^*TQ, \omega_{TQ})$, where
$\mu_{\Gamma,F}=\alpha_Q \circ TF\circ\Gamma$.
\end{defn}
    $$
    \xymatrix{
    TM \ar[r]^{TF} & TT^{*}Q \ar[r]^{\alpha_Q} & T^{*}TQ \\
    M \ar[u]^{\Gamma} \ar[urr]_{\mu_{\Gamma,F}} \ar[r]^{F} & T^{*}Q
    }
    $$

Assume that  $M$ is determined by the constraints
$$
\dot{q}^{\alpha}=\psi^{\alpha}(q^{i},\dot{q}^{a}), \quad 1\leq \alpha\leq m,
$$
so $(q^i, \dot{q}^a)$ are local coordinates on $M$, $1\leq a\leq n-m$, $n=\dim Q$. Then the solutions of the SODE
 $\Gamma$ are now represented by the following system of differential equations
\begin{eqnarray*}
    \ddot{q}^{a}&=&\Gamma^{a}(q^{i},\dot{q}^{a}),\\
      \dot{q}^{\alpha}&=&\psi^{\alpha}(q^{i},\dot{q}^{a}).
    \end{eqnarray*}

    For each map
    $$
    \begin{array}{lccc}
    F: & M & \longrightarrow & T^{*}Q \\
    & (q^{i},\dot{q}^{a}) & \longmapsto & (q^{i},F_{j}(q^{i},\dot{q}^{a}))
    \end{array}
    $$
    satisfying that rank $\left(\frac{\partial F_{i}}{\partial \dot{q}^{a}}\right)=n-m$, the submanifold
    $\hbox{Im}(\alpha_Q\circ TF\circ\Gamma)=\hbox{Im}(\mu_{\Gamma,F})$ is given in coordinates by
    $$
    \left(q^{i},\dot{q}^{a},\psi^{\alpha},\frac{\partial F_{i}}{\partial q^{a}}\dot{q}^{a}
    +\frac{\partial F_{i}}{\partial q^{\alpha}}\psi^{\alpha}
    +\frac{\partial F_{i}}{\partial\dot{q}^{a}}\Gamma^{a},F_{i}\right).
    $$
    We look for an immersion $F: M\rightarrow T^*Q$ such that $\hbox{Im}(\mu_{\Gamma, F})$ is isotropic in $(T^*TQ, \omega_{TQ})$, that is, such that the following conditions are satisfied:
       \begin{eqnarray}
        0&=&\frac{\partial F_{a}}{\partial \dot{q}^{b}}+\frac{\partial \psi^{\alpha}}{\partial \dot{q}^{a}}
\frac{\partial F_{\alpha}}{\partial \dot{q}^{b}}-\frac{\partial F_{b}}{\partial\dot{q}^{a}}    -\frac{\partial \psi^{\alpha}}{\partial\dot{q}^{b}}\frac{\partial
  F_{\alpha}}{\partial\dot{q}^{a}}, \label{CH1}
   \\
0&=&\frac{\partial^{2}F_{i}}{\partial q^{j} \partial
    q^{b}}\dot{q}^{b}+\frac{\partial^{2}F_{i}}{\partial q^{j}\partial
    q^{\beta}}\psi^{\beta}+\frac{\partial F_{i}}{\partial
    q^{\beta}}\frac{\partial  \psi^{\beta}}{\partial
    q^{j}}+\frac{\partial^{2}F_{i}}{\partial
    q^{j}\partial\dot{q}^{b}}\Gamma^{b}+\frac{\partial F_{i}}{\partial
    \dot{q}^{b}}\frac{\partial\Gamma^{b}}{\partial q^{j}}+\frac{\partial\psi^{\alpha}}{\partial
    q^{i}}\frac{\partial F_{\alpha}}{\partial q^{j}} \nonumber
    \\
    &&-\frac{\partial^{2}F_{j}}{\partial q^{i} \partial
    q^{b}}\dot{q}^{b}-\frac{\partial^{2}F_{j}}{\partial q^{i}\partial
    q^{\beta}}\psi^{\beta}-\frac{\partial F_{j}}{\partial
    q^{\beta}}\frac{\partial  \psi^{\beta}}{\partial
    q^{i}}-\frac{\partial^{2}F_{j}}{\partial
    q^{i}\partial\dot{q}^{b}}\Gamma^{b}-\frac{\partial F_{j}}{\partial
   \dot{q}^{b}}\frac{\partial\Gamma^{b}}{\partial q^{i}}-\frac{\partial\psi^{\alpha}}{\partial
    q^{j}}\frac{\partial F_{\alpha}}{\partial q^{i}}, \label{CH2}
    \\
    0&=&\frac{\partial^{2}F_{i}}{\partial \dot{q}^{a} \partial
    q^{b}}\dot{q}^{b}+\frac{\partial F_{i}}{\partial
    q^{a}}+\frac{\partial^{2}F_{i}}{\partial \dot{q}^{a}\partial
    q^{\beta}}\psi^{\beta}+\frac{\partial F_{i}}{\partial q^{\beta}}
    \frac{\partial \psi^{\beta}}{\partial\dot{q}^{a}}+
    \frac{\partial^{2}F_{i}}{\partial\dot{q}^{a}\partial\dot{q}^{b}}\Gamma^{b}+
    \frac{\partial F_{i}}{\partial\dot{q}^{b}}\frac{\partial\Gamma^{b}}{\partial\dot{q}^{a}} \nonumber
    \\
    &&-\frac{\partial F_{a}}{\partial q^{i}}+
    \frac{\partial\psi^{\alpha}}{\partial q^{i}}\frac{\partial F_{\alpha}}{\partial \dot{q}^{a}}
    -\frac{\partial\psi^{\alpha}}{\partial\dot{q}^{a}}\frac{\partial F_{\alpha}}{\partial
    q^{i}}. \label{CH3}
    \end{eqnarray}
We will refer to them as \textit{constrained Helmholtz conditions}.

Now, we will see the relationship between
    $\hbox{Im}(\mu_{\Gamma, F})$ and the dynamics given by the SODE $\Gamma$ on $M$.
    Take the submanifold
     $\alpha_Q^{-1}(\hbox{Im}(\mu_{\Gamma,F}))=TF(\Gamma(M))$  of $TT^*Q$.
     Since $TT^*Q$ is a tangent bundle, we have dynamics related to any submanifold.
    In our case $TF(\Gamma(M))$ is given by
    $$
    \left(q^{i},F_{i}(q^j, \dot{q}^b),\dot{q}^{a},\psi^{\alpha}(q^j, \dot{q}^b),\frac{\partial F_{i}}{\partial q^{a}}\dot{q}^{a}
    +\frac{\partial F_{i}}{\partial q^{\alpha}}\psi^{\alpha}
    +\frac{\partial F_{i}}{\partial\dot{q}^{a}}\Gamma^{a}\right)
    $$
    in the typical coordinates in $TT^*Q$.
    Tangent curves to this submanifold satisfy the following equations
    $$
		\dot{q}^{\alpha}=\psi^{\alpha}(q^j, \dot{q}^b)\ \mbox{ and  }\ 
    \frac{d}{dt}F_{i}=\frac{\partial F_{i}}{\partial q^{a}}\dot{q}^{a}
    +\frac{\partial F_{i}}{\partial q^{\alpha}}\psi^{\alpha}
    +\frac{\partial F_{i}}{\partial\dot{q}^{a}}\Gamma^{a}.
    $$
    Then
        $$
    \frac{\partial
    F_{i}}{\partial\dot{q}^{a}}\left(\ddot{q}^{a}-\Gamma^{a}(q^j, \dot{q}^b)\right)=0.
    $$
    Since $\left(\frac{\partial
    F_{i}}{\partial\dot{q}^{a}}\right)$ is assumed to have maximal rank, we get
    $\ddot{q}^{a}=\Gamma^{a}(q^j, \dot{q}^b)$ and $\dot{q}^{\alpha}=\psi^{\alpha}(q^j, \dot{q}^b)$.
    In this case we have  seen that the isotropic submanifold $TF(\Gamma(M))=\alpha^{-1}_{Q}(\Sigma_{\Gamma,F})$ on $TT^*Q$ carries the original dynamics defined by the SODE $\Gamma$ on $M$.

Now we generalize the characterization of Theorem \ref{theorem-crampin} to constrained systems:

\begin{thm} \label{omega-constraints}
A SODE $\Gamma$ on $M$ is variational if and only if there exists a $2$-form $\Omega$ on $M$ satisfying
\begin{enumerate}
\item d$\Omega=0$,
\item $\Omega(v_{1},v_{2})=0$ for all $v_{1},v_{2}\in V(M)$,
\item $\mathcal{L}_{\Gamma}\Omega=0$,
\item $\flat_{\Omega}|_{V(M)}$ is injective.
\end{enumerate}
\end{thm}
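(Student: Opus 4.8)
The plan is to re-run the argument of Theorem~\ref{poi}, weakening ``Lagrangian'' to ``isotropic'', ``local diffeomorphism'' to ``immersion'', and the maximal-rank of $\Omega$ to condition~(iv). The hinge is the identity, valid verbatim as in~\eqref{useful-eq1} but now on $M$,
\[
\mu_{\Gamma,F}=\mathcal{L}_{\Gamma}\bigl(F^{*}\theta_{Q}\bigr),\qquad F^{*}\theta_{Q}=F_{i}\,dq^{i},
\]
obtained by a direct computation in the adapted coordinates $(q^{i},\dot q^{a})$ using the SODE conditions $\Gamma(q^{a})=\dot q^{a}$, $\Gamma(q^{\alpha})=\psi^{\alpha}$; here $\mu_{\Gamma,F}$ is read as the $1$-form $\mu_{\Gamma,F}^{*}\theta_{TQ}$ on $M$, so that $\Sigma_{\Gamma,F}$ is isotropic if and only if $d\mu_{\Gamma,F}=0$. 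I will also use repeatedly the elementary remark that, because $F$ is a bundle morphism over $Q$, $F$ is an immersion if and only if the matrix $(\partial F_{i}/\partial\dot q^{a})$ has maximal rank $n-m$.

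For the implication $\Rightarrow$, take $F\colon M\to T^{*}Q$ as in Definition~\ref{Dfn:VarConstraint} and set $\Omega=-d(F^{*}\theta_{Q})$ on $M$. Then $d\Omega=0$, which is~(i). Since $\Omega=-dF_{i}\wedge dq^{i}$ and $dq^{i}$, being a pullback of a $1$-form from $Q$, annihilates $V(M)={\rm Ker}\,T(\tau_{Q}|_{M})$, the $2$-form $\Omega$ vanishes on $V(M)\times V(M)$, which is~(ii). Isotropy of $\Sigma_{\Gamma,F}$ gives $d\mu_{\Gamma,F}=0$, whence $\mathcal{L}_{\Gamma}\Omega=-d\,\mathcal{L}_{\Gamma}(F^{*}\theta_{Q})=-d\mu_{\Gamma,F}=0$, which is~(iii). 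Finally, for $v=v^{a}\partial/\partial\dot q^{a}\in V(M)$ one computes $\iota_{v}\Omega=-v^{a}(\partial F_{i}/\partial\dot q^{a})\,dq^{i}$; since the $dq^{i}$ are pointwise independent on $M$ and $(\partial F_{i}/\partial\dot q^{a})$ has rank $n-m$ ($F$ being an immersion), $\iota_{v}\Omega=0$ forces $v=0$, which is~(iv).

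For the implication $\Leftarrow$, let $\Omega$ satisfy (i)--(iv). By~(i) and the Poincar\'e lemma, locally $\Omega=d\Theta$ for a $1$-form $\Theta$. By~(ii), the pullback of $d\Theta$ to each fibre of the submersion $\tau_{Q}|_{M}\colon M\to Q$ vanishes, so $\Theta$ is fibrewise closed; integrating it fibrewise along the radial paths of a local trivialization of $\tau_{Q}|_{M}$ produces a function $f$ with $\Theta(v)=\langle df,v\rangle$ for all $v\in V(M)$. Put $\widetilde{\Theta}=\Theta-df$, so that $\widetilde{\Theta}|_{V(M)}=0$ and $d\widetilde{\Theta}=\Omega$. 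Exactly as in Theorem~\ref{poi}, the prescription $\langle F(v_{q}),w_{q}\rangle=\langle\widetilde{\Theta}(v_{q}),W_{q}\rangle$ with $T\tau_{Q}(W_{q})=w_{q}$ is well defined (the choice of $W_{q}$ is irrelevant since $\widetilde{\Theta}$ kills $V(M)$, and $T(\tau_{Q}|_{M})$ is onto), defines a smooth bundle map $F\colon M\to T^{*}Q$ over $Q$, and satisfies $F^{*}\theta_{Q}=\widetilde{\Theta}$. Then by the hinge identity and~(iii), $d\mu_{\Gamma,F}=d\,\mathcal{L}_{\Gamma}(F^{*}\theta_{Q})=\mathcal{L}_{\Gamma}\,d\widetilde{\Theta}=\mathcal{L}_{\Gamma}\Omega=0$, so $\Sigma_{\Gamma,F}=\hbox{Im}(\mu_{\Gamma,F})$ is isotropic in $(T^{*}TQ,\omega_{TQ})$. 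That $F$ is an immersion follows from~(iv): since $\widetilde{\Theta}=F_{i}\,dq^{i}$ one has $\Omega=-dF_{i}\wedge dq^{i}$ and $\iota_{v}\Omega=-v^{a}(\partial F_{i}/\partial\dot q^{a})\,dq^{i}$ for $v=v^{a}\partial/\partial\dot q^{a}$, so injectivity of $\flat_{\Omega}|_{V(M)}$ forces $(\partial F_{i}/\partial\dot q^{a})$ to have rank $n-m$. Hence $\Gamma$ is variational in the sense of Definition~\ref{Dfn:VarConstraint}.

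I expect the only genuinely non-formal point to be the construction of the correcting function $f$, i.e. the passage from~(ii) (the differential of $\Theta$ annihilates the vertical distribution of $\tau_{Q}|_{M}$) to a potential realizing $\Theta$ on $V(M)$. This is a fibrewise Poincar\'e-lemma argument that must be carried out on a neighbourhood over which $\tau_{Q}|_{M}$ trivializes with (simply) connected fibres; it is the constrained analogue of the corresponding step in the proof of Theorem~\ref{poi} and the technical heart of the converse. Everything else becomes formal once the hinge identity $\mu_{\Gamma,F}=\mathcal{L}_{\Gamma}(F^{*}\theta_{Q})$ and the rank characterization of the immersion condition are in place.
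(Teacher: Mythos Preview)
Your argument is correct and follows the paper's proof closely: same choice $\Omega=\pm d(F^{*}\theta_{Q})$, same semi-basic correction $\widetilde\Theta=\Theta-df$, same definition of $F$ from $\widetilde\Theta$, and the same rank argument for~(iv). The one substantive difference is in the $\Leftarrow$ direction. You conclude isotropy of $\Sigma_{\Gamma,F}$ directly from the pullback computation $\mu_{\Gamma,F}^{*}\omega_{TQ}=-d\bigl(\mathcal{L}_{\Gamma}F^{*}\theta_{Q}\bigr)=-\mathcal{L}_{\Gamma}\Omega=0$, which is the clean route. The paper instead passes through Proposition~\ref{Prop:TildeS}: it observes that $\mathrm{Im}(\mathcal{L}_{\Gamma}F^{*}\theta_{Q})\subset T^{*}M$ is Lagrangian, builds from it the Lagrangian submanifold $\widetilde{\mathrm{Im}(\mathcal{L}_{\Gamma}F^{*}\theta_{Q})}\subset T^{*}TQ$, and checks in coordinates that $\Sigma_{\Gamma,F}\subset\widetilde{\mathrm{Im}(\mathcal{L}_{\Gamma}F^{*}\theta_{Q})}$, whence isotropy. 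Your approach is more economical; the paper's detour, however, is not wasted effort: the Lagrangian extension $\widetilde{\mathrm{Im}(\mathcal{L}_{\Gamma}F^{*}\theta_{Q})}=\Sigma_{l}$ is precisely the object used in the remark following the theorem and is the conceptual bridge to Theorem~\ref{extension-thm}.
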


\begin{proof}
$\Rightarrow$ Assume that $\Gamma$ is variational, that is, there exists an immersion $F:M\longrightarrow T^{*}Q$ such that $\Sigma_{\Gamma, F}=\hbox{Im}(\mu_{\Gamma,F})$ is isotropic in $(T^{*}TQ,\omega_{TQ})$. Then we define $\Omega=dF^{*}\theta_{Q}\in\bigwedge^{2}(M)$. We first prove that $\Sigma_{\Gamma,F}$ is isotropic if and only if Im$(\mathcal{L}_{\Gamma}F^{*}\theta_{Q})$ is Lagrangian in $(T^*M, \omega_M)$, that is, $d(\mathcal{L}_{\Gamma}F^{*}\theta_{Q})=\mathcal{L}_{\Gamma}\Omega=0$. In local coordinates $(q^{i},\dot{q}^{a})$ on M,

$$
\mathcal{L}_{\Gamma}F^{*}\theta_{Q}=\left(\Gamma(F_{i})+\frac{\partial\psi^{\alpha}}{\partial q^{i}}F_{\alpha}\right)dq^{i}+\left(F_{a}+\frac{\partial\psi^{\alpha}}{\partial\dot{q}^{a}}F_{\alpha}\right)d\dot{q}^{a}.
$$
On the other hand, $\Sigma_{\Gamma,F}$ is given by the following set of points of $T^*TQ$:
$$
\lp q^{i}, \dot{q}^{a}, \psi^{\alpha}, \frac{\partial F_{i}}{\partial q^{a}}\dot{q}^{a}+\frac{\partial F_{i}}{\partial q^{\alpha}}\psi^{\alpha}+\frac{\partial F_{i}}{\partial \dot{q}^{a}}\Gamma^{a}, F_{i} \rp.
$$
If we denote by $i_{\Sigma_{\Gamma,F}}:\Sigma_{\Gamma,F}\longrightarrow T^{*}TQ$ the inclusion, then
$$
i^{*}_{\Sigma_{\Gamma,F}}\theta_{TQ}=\Gamma(F_{i})dq^{i}+F_{a}d\dot{q}^{a}+F_{\alpha}d\psi^{\alpha}=\lp \Gamma(F_{i})+\frac{\partial \psi^{\alpha}}{\partial q^{i}} F_{\alpha}\rp dq^{i} + \lp F_{a}+\frac{\partial\psi^{\alpha}}{\partial\dot{q}^{a}}F_{\alpha}\rp d\dot{q}^{a}.
$$
Now it is clear that the condition of isotropy, $i^{*}_{\Sigma_{\Gamma,F}}\omega_{TQ}=0$, is equivalent to the condition of Im$(\mathcal{L}_{\Gamma}F^{*}\theta_{Q})$ being Lagrangian in $(T^*M, \omega_M)$, in other words,  $d\mathcal{L}_{\Gamma}F^{*}\theta_{Q}=\mathcal{L}_{\Gamma}\Omega=0$.

The first two properties in the statement of the Theorem follow directly from the definition of $\Omega$ and the last one from $F$ being an immersion. Indeed, $\Omega=\lp\frac{\partial F_{i}}{\partial q^{j}}\rp dq^{j}\wedge dq^{i}+\lp\frac{\partial F_{i}}{\partial \dot{q}^{a}}\rp d\dot{q}^{a}\wedge dq^{i}$, and for any $v_{1},v_{2}$ in $V(M)$, $i_{v_{1}}\Omega-i_{v_{2}}\Omega=\lp v_{1}^{a}-v_{2}^{a}\rp \lp \frac{\partial F_{i}}{\partial \dot{q}^{a}}\rp dq^{i}=0\mbox{ for all } i=1,\ldots,n$. As $\lp \frac{\partial F_{i}}{\partial \dot{q}^{a}}\rp$ has maximal rank, $v_{1}=v_{2}$.

$\Leftarrow$ Now, given a $2$-form on $M$
satisfying the conditions in the statement, we construct an
immersion that provides an isotropic submanifold $\Sigma_{\Gamma,F}$ of $(T^{*}TQ,\omega_{TQ})$.
Since $d\Omega=0$, locally we can write $\Omega=d\Theta$. Then using
the second condition we get that there exists a locally defined
function $f$ on $M$ such that $\Theta(v)=df(v)$ for each vertical
vector $v\in V(M)$. We can define $\tilde{\Theta}=\Theta-df$ which is a
semi-basic $1$-form on $M$, that is, it vanishes on vertical vectors
and can be written in coordinates as $\tilde{\Theta}=\mu_{i}dq^{i}$,
$\mu_{i}$ being functions on $M$. Moreover $d\tilde{\Theta}=\Omega$. Then
we define $F\colon M\to T^*Q$ by
$$
\langle F(m), v_{q} \rangle = \langle \tilde{\Theta}(m), w_{m}\rangle,
$$
where $m\in M$ and $w_{m}$ is any vector in $T_{m}M$ satisfying $T_{m}\tau_{Q}|_{M}(w_{m})=v_{q}$. This definition does not depend on the choice of $w_{m}$ since $\tilde{\Theta}$ vanishes on vertical vectors and it gives $\tilde{\Theta}=F^{*}\theta_{Q}$.

Since the 1-form $\mathcal{L}_{\Gamma}F^{*}\theta_{Q}\in\bigwedge^{1}(M)$ is closed, then $\hbox{Im}(\mathcal{L}_{\Gamma}F^{*}\theta_{Q})$ is a Lagrangian submanifold of $(T^*M, \omega_M)$. Having Proposition~\ref{Prop:TildeS} in mind, we obtain from it a Lagrangian submanifold of $(T^*TQ, \omega_{TQ})$
$$\widetilde{\hbox{Im}(\mathcal{L}_{\Gamma}F^{*}\theta_{Q})}=
\{\mu\in T^*TQ\; |\; i_M^*\mu\in \hbox{Im}(\mathcal{L}_{\Gamma}F^{*}\theta_{Q})\},
$$
where $i_M: M\hookrightarrow TQ$ is the canonical inclusion. 
In coordinates, $\widetilde{\hbox{Im}(\mathcal{L}_{\Gamma}F^{*}\theta_{Q})}$ is expressed as
$$
\lp q^{i}, \dot{q}^{a}, \psi^{\alpha}, \Gamma(F_{i})+\frac{\partial \psi^{\alpha}}{\partial q^{i}}F_{\alpha}-\frac{\partial \psi^{\alpha}}{\partial q^{i}}\tilde{p}_{\alpha}, F_{a}+\frac{\partial \psi^{\alpha}}{\partial\dot{q}^{a}}F_{\alpha}-\frac{\partial \psi^{\alpha}}{\partial \dot{q}^{a}}\tilde{p}_{\alpha}, \tilde{p}_{\alpha}  \rp.
$$

In particular for $\tilde{p}_{\alpha}=F_{\alpha}$ we have
$$
\hbox{Im}(\mu_{\Gamma,F})\subset \widetilde{\hbox{Im}(\mathcal{L}_{\Gamma}F^{*}\theta_{Q})}.
$$

As $\mathcal{L}_{\Gamma}\Omega=0$, we get that both $\hbox{Im}(\mathcal{L}_{\Gamma}F^{*}\theta_{Q})$ and $\widetilde{\hbox{Im}(\mathcal{L}_{\Gamma}F^{*}\theta_{Q})}$ need to be Lagrangian and therefore $\hbox{Im}(\mu_{\Gamma,F})$ is isotropic in $(T^*TQ, \omega_{TQ})$.

Finally since $\flat_{\Omega}|_{V(TM)}$ is injective and $d\tilde{\Theta}=\Omega$, $\lp \frac{\partial F_{i}}{\partial \dot{q}^{a}} \rp$ has maximal rank and $F$ is an immersion. Now we conclude that $\Gamma$ is variational according to Definition~\ref{Dfn:VarConstraint}.
\end{proof}

\begin{remark}
Note that in the proof above we have described a way to assign to each isotropic submanifold $\Sigma_{\Gamma,F}$ a Lagrangian submanifold that contains it and projects over the constraint submanifold, see Proposition~\ref{Prop:TildeS}. From $\mathcal{L}_{\Gamma}\Omega=0$ we obtain a locally defined function $l:M\longrightarrow \R$ such that $\mathcal{L}_{\Gamma}F^{*}\theta_{Q}=dl$. Since $\widetilde{\hbox{Im}(\mathcal{L}_{\Gamma}F^{*}\theta_{Q})}$ coincides with $\Sigma_{l}= \lc \mu\in T^{*}TQ: i^{*}\mu=dl \rc\subset T^{*}TQ$, the construction from Theorem \ref{thm:tulczyjew}, it gives the constrained variational  dynamics associated to $l$ (see Section \ref{VC}). Summing up, given a variational SODE $\Gamma$ on $M$, we can always find a local Lagrangian $l$ on $M$ such that the solutions of $\Gamma$ are constrained variational trajectories for $l$.

Note that in this case we were not adressing the question of finding a Lagrangian $L:TQ\longrightarrow \R$ such that the solutions of the nonholonomic equations for $L$ coincide with the solutions of $\Gamma$, but asking when the nonholonomic dynamics can be seen as constrained variational dynamics, see Sections~\ref{Sec:Nonholo} and~\ref{VC}.


\end{remark}

In the next, we will study the problem of how to derive a description of the constrained dynamics in terms of a variational problem without constraints (see \cite{2009BlochFerMestdag}, \cite{2010Crampin}). We will need the following Lemma.

\begin{lemma} \label{lemma-isotropa}
Let $P$ be a smooth manifold, $C\subset P$ a submanifold and $\gamma$ a section of $\left.T^{*}P\right|_{C}\longrightarrow C$, where $\left.T^{*}P\right|_{C}=\left\{\mu\in T^{*}P: \pi_{P}(\mu)\in C \right\}$ and $\pi_{P}:T^{*}P\longrightarrow P$ denotes the projection over $P$. If $\gamma(C)$ is isotropic in $(T^{*}P,\omega_{P})$, then there is a $1$-form $\tilde{\gamma}$ defined in a neighborhood of $C$ such that
\begin{itemize}
\item $\left.\tilde{\gamma}\right|_{C}=\gamma$,
\item $d\tilde{\gamma}=0$.
\end{itemize}		
\end{lemma}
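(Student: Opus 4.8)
The plan is to reduce the statement to a local question and then exploit a Darboux-type normal form for the pair $(C, \gamma(C))$ inside $T^{*}P$. First I would work in local coordinates adapted to $C$: choose coordinates $(x^{a}, y^{\mu})$ on $P$ near a point of $C$ so that $C = \{y^{\mu}=0\}$, with induced bundle coordinates $(x^{a}, y^{\mu}, p_{a}, q_{\mu})$ on $T^{*}P$ and $\omega_{P} = dx^{a}\wedge dp_{a} + dy^{\mu}\wedge dq_{\mu}$. A section $\gamma$ of $\left.T^{*}P\right|_{C}\to C$ is then given by functions $p_{a}=\gamma_{a}(x)$, $q_{\mu}=\gamma_{\mu}(x)$, and $\gamma(C)$ is the submanifold $\{y^{\mu}=0,\ p_{a}=\gamma_{a}(x),\ q_{\mu}=\gamma_{\mu}(x)\}$, of dimension $\dim C$. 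I would compute $i_{\gamma(C)}^{*}\theta_{P} = \gamma_{a}(x)\,dx^{a}$ (the $dy^{\mu}$ terms drop since $y^{\mu}\equiv 0$ on $\gamma(C)$), hence $i_{\gamma(C)}^{*}\omega_{P} = -d(\gamma_{a}\,dx^{a})$. The isotropy hypothesis $i_{\gamma(C)}^{*}\omega_{P}=0$ therefore says precisely that the $1$-form $\bar{\gamma} := \gamma_{a}(x)\,dx^{a}$ on $C$ is closed.

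Next I would produce the desired extension $\tilde{\gamma}$. By the Poincaré lemma applied to the closed form $\bar{\gamma}$ on (a neighborhood in) $C$, there is a function $f(x)$ with $\bar{\gamma} = df$, i.e. $\gamma_{a} = \partial f/\partial x^{a}$. Now define, on a neighborhood of $C$ in $P$,
\[
\tilde{\gamma} := d f + \gamma_{\mu}(x)\,dy^{\mu},
\]
where $f$ and $\gamma_{\mu}$ are regarded as functions of $x$ only (pulled back along the projection $(x,y)\mapsto x$). This $\tilde{\gamma}$ is a genuine $1$-form in a neighborhood of $C$. Restricting to $C=\{y^{\mu}=0\}$: the pullback of $df$ is $\gamma_{a}\,dx^{a}$ and the pullback of $dy^{\mu}$ is $0$, but when we evaluate $\tilde{\gamma}$ as a section of $\left.T^{*}P\right|_{C}$ (not its pullback to $C$) the coefficient of $dy^{\mu}$ is exactly $\gamma_{\mu}(x)$, so $\left.\tilde{\gamma}\right|_{C}=\gamma$ as required. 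Moreover $d\tilde{\gamma} = d(df) + d(\gamma_{\mu}\,dy^{\mu}) = \frac{\partial \gamma_{\mu}}{\partial x^{a}}\,dx^{a}\wedge dy^{\mu}$, which does not vanish in general — so this naive $\tilde{\gamma}$ is not yet closed, and a correction is needed.

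To fix this I would instead set $\tilde{\gamma} := d\bigl(f + \gamma_{\mu}(x)\,y^{\mu}\bigr)$, i.e. take $\tilde{\gamma}=dg$ with $g(x,y) := f(x) + \gamma_{\mu}(x)\,y^{\mu}$. Then $d\tilde{\gamma}=0$ automatically, and $\tilde{\gamma} = \frac{\partial f}{\partial x^{a}}\,dx^{a} + \frac{\partial \gamma_{\mu}}{\partial x^{a}}y^{\mu}\,dx^{a} + \gamma_{\mu}(x)\,dy^{\mu}$; evaluating on $C=\{y^{\mu}=0\}$ kills the middle term and leaves coefficients $\gamma_{a}(x)$ for $dx^{a}$ and $\gamma_{\mu}(x)$ for $dy^{\mu}$, i.e. $\left.\tilde{\gamma}\right|_{C}=\gamma$. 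This settles it locally. The main obstacle is the passage from the local construction to a form defined on a full neighborhood of $C$: the local primitive $f$ exists only on contractible pieces of $C$, and the two bullet conditions are local in nature around $C$ but the neighborhood of $C$ must be honest. I would handle this either by assuming (harmlessly, since the intended applications are local) that $C$ is contractible, or by patching the local exact forms $dg_{U}$ over a cover $\{U\}$ of $C$ using a partition of unity — noting that on overlaps $g_{U}-g_{V}$ is locally constant on $C$ after restriction, so the glued form is still closed and restricts to $\gamma$; alternatively one invokes a tubular-neighborhood retraction $r$ of a neighborhood of $C$ onto $C$ together with the fact that $\bar\gamma$ closed on $C$ gives $r^{*}\bar\gamma$ closed near $C$, then adds the $\gamma_{\mu}\,dy^{\mu}$-type correction as above. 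The only genuinely delicate point is keeping track of the distinction between a $1$-form \emph{on} $C$ and a section of $\left.T^{*}P\right|_{C}$, which is exactly what the $y^{\mu}$-linear term in $g$ is designed to encode.
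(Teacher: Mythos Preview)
Your proof is correct and arrives at the same closed extension as the paper: $\tilde{\gamma}=dg$ with $g(x,y)=f(x)+\gamma_{\mu}(x)\,y^{\mu}$, where $f$ is a local primitive of the closed $1$-form $\gamma_{a}\,dx^{a}$ on $C$ guaranteed by isotropy.

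The difference is in how this formula is obtained. You write it down directly and verify the two bullet points, which is the most economical route. The paper instead invokes the general symplectic construction (explained at the end of Section~\ref{Sec:Lagrang}) of extending an isotropic submanifold to a Lagrangian one by transporting it along the flows of Hamiltonian vector fields of suitably chosen constraint functions: taking $\phi_{\mu}=q_{\mu}-\gamma_{\mu}(x)$, the Hamiltonian flow of $X_{\phi_{\mu}}=\partial/\partial y^{\mu}+(\partial\gamma_{\mu}/\partial x^{a})\,\partial/\partial p_{a}$ carries $\gamma(C)$ to the graph of $dg$ with the same $g$ as yours. What the paper's route buys is flexibility: the Remark following the lemma and the subsequent examples exploit the fact that \emph{different} choices of constraint functions produce \emph{different} Lagrangian extensions (hence different Lagrangians $L$ on $TQ$ in the application), which is the mechanism used later to manufacture regular Lagrangians. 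Your argument gives one extension cleanly but does not by itself exhibit this freedom.

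A minor point: the paper's proof, like your local construction, is entirely in adapted coordinates and does not address globalization; your discussion of patching via partitions of unity or tubular retractions goes beyond what the paper does, and in fact the applications in the paper are local, so this is not needed.
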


\begin{proof}
Take adapted coordinates $(x^{i},y^{a})$, $i=1,\ldots,n-m,\ a=1,\ldots,m$, on $P$ such that $C$ is given by $y^{a}=0$ and denote the corresponding momenta coordinates by $p_{i}$ and $\tilde{p}_{a}$. Then $\gamma(C)$ is given by
$$
\left( x^{i}, 0, \gamma_{i}(x), \tilde{\gamma}_{a}(x) \right),
$$
and it projects over $C$. The isotropy condition gives $\frac{\partial \gamma_{i}}{\partial x^{j}}=\frac{\partial \gamma_{j}}{\partial x^{i}}$. 
We want to see $\gamma(C)$ inside some submanifold $N$ of $T^{*}P$ of dimension $2n-m$ and then apply the construction at the end of Section \ref{Sec:Lagrang} to extend it to a Lagrangian submanifold via the Hamiltonian vector fields corresponding to the constraints defining $N$. For that we have many options, for instance we can choose among the constraints
$$
y^{a}=0, \ \ p_{i}-\gamma_{i}=0, \ \ \tilde{p}_{a}-\tilde{\gamma}_{a}=0
$$
and linear combinations of them. If we consider $\phi_{a}=\tilde{p}_{a}-\tilde{\gamma}_{a}$ the Hamiltonian vector field is given by
$$
X_{\phi_{a}}=\frac{\partial}{\partial y^{a}}+\frac{\partial \tilde{\gamma}_{a}}{\partial x^{j}}\frac{\partial}{\partial p_{j}},
$$
which satisfies $X_{\phi_{a}}(y_{a})=1$, so it is not tangent to  $\gamma(C)$. Extending $\gamma(C)$ along the flows of $X_{\phi_{a}}$ we obtain
$$
\lp x^{i}, y^{a}, \gamma_{i}(x)+\frac{\partial \tilde{\gamma}_{a}}{\partial x^{i}}y^{a}, \tilde{\gamma}_{a}(x) \rp,
$$
which is the image of $\tilde{\gamma}=dL$ with $L\colon P \to \mathbb{R}$, $L(x,y)=\tilde{\gamma}_{a}(x)y^{a}+f(x)$, not necessarily regular, and $\frac{\partial f}{\partial x^{i}}=\gamma_{i}$. The existence of such a function $f$ on $C$ is guaranteed by the isotropy condition. 
\end{proof}

\begin{remark}
Note that there are many possible ways to choose the constraints and construct Lagrangians. For instance taking $\phi_{a}=y^{a}+\tilde{p}_{a}-\gamma_{a}$ we obtain $L=\gamma_{a}y^{a}+\gamma_{i}x^{i}-\frac{\partial \gamma_{a}}{\partial x^{j}}(y^{a})^{2}$. On the other hand, if we take $\phi_{a}=y^{a}$ then we obtain a Lagrangian submanifold projecting over $M$ which corresponds to the constrained variational description.
\end{remark}

As a consequence of Lemma \ref{lemma-isotropa}, taking $\gamma(C)=\Sigma_{\Gamma,F}$ we obtain the following important result.

\begin{thm} \label{extension-thm}
If a SODE $\Gamma$ on $M$ is variational, then there exists a Lagrangian
$L:TQ\longrightarrow\mathbb{R}$ such that
the integral curves of $\Gamma$ are the restriction of the solutions of the
Euler-Lagrange equations of $L$ to $M$.
\end{thm}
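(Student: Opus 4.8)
The plan is to manufacture $L$ by first upgrading the isotropic submanifold $\Sigma_{\Gamma,F}$ to an \emph{exact} one (the graph of some $dL$), and then reading off the first derivatives of $L$ along $M$. Since $\Gamma$ is variational, Definition~\ref{Dfn:VarConstraint} provides an immersion $F\colon M\to T^{*}Q$ over $Q$ such that $\Sigma_{\Gamma,F}=\hbox{Im}(\mu_{\Gamma,F})$ is isotropic in $(T^{*}TQ,\omega_{TQ})$; here $\mu_{\Gamma,F}$ is a section of $\left.T^{*}TQ\right|_{M}\to M$ because $\pi_{TQ}\circ\mu_{\Gamma,F}=i_{M}$. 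I would then apply Lemma~\ref{lemma-isotropa} with $P=TQ$, $C=M$, $\gamma=\mu_{\Gamma,F}$ to get a closed $1$-form $\tilde{\gamma}$ defined near $M$ with $\left.\tilde{\gamma}\right|_{M}=\mu_{\Gamma,F}$, and write $\tilde{\gamma}=dL$ by the Poincar\'e lemma, extending $L$ arbitrarily to $L\colon TQ\to\mathbb{R}$. Concretely, in the adapted coordinates of the proof of Lemma~\ref{lemma-isotropa} one may take $L=F_{\alpha}(\dot{q}^{\alpha}-\psi^{\alpha})+f$, with $f$ a local primitive of the closed $1$-form $\mathcal{L}_{\Gamma}F^{*}\theta_{Q}$ on $M$ (closedness being precisely the isotropy condition in the proof of Theorem~\ref{omega-constraints}). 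This $L$ need not be regular, but the statement does not ask for regularity.

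The heart of the matter is the behaviour of $L$ along $M$. From $\left.dL\right|_{M}=\mu_{\Gamma,F}$ and the coordinate form of $\mu_{\Gamma,F}$, comparing the $d\dot{q}^{i}$- and $dq^{i}$-components gives
\[
\left.\frac{\partial L}{\partial\dot{q}^{i}}\right|_{M}=F_{i},\qquad
\left.\frac{\partial L}{\partial q^{i}}\right|_{M}=\frac{\partial F_{i}}{\partial q^{a}}\dot{q}^{a}+\frac{\partial F_{i}}{\partial q^{\alpha}}\psi^{\alpha}+\frac{\partial F_{i}}{\partial\dot{q}^{a}}\Gamma^{a}=\Gamma(F_{i}),\qquad i=1,\dots,n,
\]
where $\Gamma=\dot{q}^{a}\tfrac{\partial}{\partial q^{a}}+\psi^{\alpha}\tfrac{\partial}{\partial q^{\alpha}}+\Gamma^{a}\tfrac{\partial}{\partial\dot{q}^{a}}$ is the given SODE on $M$. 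Now take any curve $c$ in $Q$ whose velocity $\dot{c}$ stays in $M$, so $\tfrac{d}{dt}q^{\alpha}=\psi^{\alpha}$; since $\dot{c}$ takes values in $M$, I may evaluate the functions above along $\dot{c}$ using their restrictions to $M$ before differentiating. Expanding $\tfrac{d}{dt}\bigl(F_{i}(\dot{c})\bigr)=\tfrac{\partial F_{i}}{\partial q^{a}}\dot{q}^{a}+\tfrac{\partial F_{i}}{\partial q^{\alpha}}\psi^{\alpha}+\tfrac{\partial F_{i}}{\partial\dot{q}^{a}}\ddot{q}^{a}$ and subtracting $\Gamma(F_{i})$ yields the key identity
\[
\frac{d}{dt}\!\left(\frac{\partial L}{\partial\dot{q}^{i}}(\dot{c})\right)-\frac{\partial L}{\partial q^{i}}(\dot{c})=\left(\ddot{q}^{a}-\Gamma^{a}(q,\dot{q}^{a})\right)\frac{\partial F_{i}}{\partial\dot{q}^{a}},\qquad i=1,\dots,n.
\]

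From here the theorem follows. Since $F$ is an immersion over $Q$, the $n\times(n-m)$ matrix $\left(\tfrac{\partial F_{i}}{\partial\dot{q}^{a}}\right)$ has maximal rank $n-m$, hence is injective; therefore a curve $c$ with $\dot{c}\subset M$ solves the Euler--Lagrange equations of $L$ if and only if $\ddot{q}^{a}=\Gamma^{a}(q,\dot{q}^{a})$ for all $t$, i.e.\ if and only if $\dot{c}$ is an integral curve of $\Gamma$ on $M$. Conversely, every integral curve of $\Gamma$ is of this type: $\Gamma$ is a SODE, so its integral curves are the velocity curves $\dot{c}$ of their projections $c=\tau_{Q}\circ(\cdot)$, and they lie in $M$ by construction. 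Hence the integral curves of $\Gamma$ are exactly the solutions of the Euler--Lagrange equations of $L$ that stay in $M$, which is the assertion. (The forward inclusion may alternatively be read off from $\alpha_{Q}^{-1}(\Sigma_{\Gamma,F})=TF(\Gamma(M))\subseteq\alpha_{Q}^{-1}(\hbox{Im}(dL))$ together with the fact, established just before the theorem, that the left-hand side carries the $\Gamma$-dynamics while the right-hand side carries the Euler--Lagrange dynamics of $L$.) The delicate point is the converse inclusion: one must not restrict attention to the $n-m$ ``unconstrained'' Euler--Lagrange equations but use all $n$ of them together with the maximal-rank (immersion) hypothesis on $F$, since $L$ may be singular and a priori its Euler--Lagrange solutions need not respect $M$. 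The only further care is that $f$, hence $L$, is in general only locally defined; this is harmless, as modifying $L$ away from a neighbourhood of $M$ does not affect which curves contained in $M$ solve its Euler--Lagrange equations.
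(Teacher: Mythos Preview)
Your proof is correct and follows exactly the paper's approach: apply Lemma~\ref{lemma-isotropa} with $P=TQ$, $C=M$, $\gamma=\mu_{\Gamma,F}$ to obtain a closed extension $\tilde{\gamma}=dL$, then verify that along $M$ the Euler--Lagrange operator reduces to $\frac{\partial F_i}{\partial\dot q^a}(\ddot q^a-\Gamma^a)$. The paper states the result as an immediate corollary of the lemma without spelling out the Euler--Lagrange computation or the explicit primitive $L=F_\alpha(\dot q^\alpha-\psi^\alpha)+f$, so your version is a more detailed rendering of the same argument.
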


\begin{ex}
{\rm 
Let $Q=\mathbb{R}^{2}$ with coordinates $(x,y)$ and denote fibered coordinates on $TQ$ and $T^{*}TQ$ by $(x,y,\dot{x},\dot{y})$ and $(x,y,\dot{x},\dot{y},\mu_{x},\mu_{y},\tilde{\mu}_{x},\tilde{\mu}_{y})$ respectively. Let $N=\lc (x,y,\dot{x},f(x,y,\dot{x})) \rc \subset TQ$ be the constraint submanifold and the SODE $\Gamma$ on $N$ be given by $\ddot{x}=0$. That is, we have the dynamics given by
\[
\ddot{x}=0, \; \quad \dot{y}=f(x,y,\dot{x})\; .
\]
 We define $F:N\longrightarrow T^{*}Q$ by $F(x,y,\dot{x})=(x,y,\dot{x}+y,x)$, which is an immersion. Then $\Sigma_{\Gamma,F}\subset T^{*}TQ$ is locally described by $(x,y,\dot{x},f,f,\dot{x},\dot{x}+y,x)$ and is an isotropic submanifold of dimension 3, for $dx\wedge df+dy\wedge d\dot{x}+d\dot{x}\wedge d(\dot{x}+y)+df\wedge dx=0$. Note that
$$
\mathcal{L}_{\Gamma}F^{*}\theta_{Q}=\lp f+x\frac{\partial f}{\partial x}\rp dx + \lp \dot{x}+x\frac{\partial f}{\partial y} \rp dy + \lp \dot{x}+y+x\frac{\partial f}{\partial \dot{x}} \rp d\dot{x}\,.
$$
Therefore $\widetilde{\hbox{Im} \mathcal{L}_{\Gamma}F^{*}\theta_{Q}}\subset T^{*}TQ$  is locally described by
$$
\lp x,y,\dot{x},f,f+x\frac{\partial f}{\partial x}-\frac{\partial f}{\partial x}\tilde{\mu}_{y}, \dot{x}+x\frac{\partial f}{\partial y}-\frac{\partial f}{\partial y}\tilde{\mu}_{y}, \dot{x}+y+x\frac{\partial f}{\partial \dot{x}}-\frac{\partial f}{\partial \dot{x}}\tilde{\mu}_{y}, \tilde{\mu}_{y} \rp.
$$
When $\tilde{\mu}_{y}=x$, $\Sigma_{\Gamma,F}$  is recovered. Since $d\mathcal{L}_{\Gamma}F^{*}\theta_{Q}=0$, we have a local Lagrangian $l:N\longrightarrow\mathbb{R}$,  $l=\frac{\dot{x}^{2}}{2}+\dot{x}y+xf(x,y,\dot{x})$,  satisfying
$$
\frac{\partial l}{\partial x}=f+x\frac{\partial f}{\partial x},\ 
\frac{\partial l}{\partial y}=\dot{x}+x\frac{\partial f}{\partial y},\ 
\frac{\partial l}{\partial \dot{x}}=\dot{x}+y+x\frac{\partial f}{\partial \dot{x}}.
$$
Note that $l$ is the restiction of the singular Lagrangian $L_{1}=\frac{\dot{x}^{2}}{2}+\dot{x}y+x\dot{y}$ to $\dot{y}=f$.

Consider the constraint $\phi=\dot{y}-f+\tilde{\mu}_{y}-x$ and the
corresponding Hamiltonian vector field for the symplectic structure $\omega_{TQ}$:

$$X_{\phi}=-\frac{\partial}{\partial\tilde{\mu}_{y}}+\frac{\partial
f}{\partial \dot{x}}\frac{\partial}{\partial
\tilde{\mu}_{x}}+\frac{\partial f}{\partial
x}\frac{\partial}{\partial \mu_{x}}+\frac{\partial f}{\partial
y}\frac{\partial}{\partial \mu_{y}}+\frac{\partial}{\partial
\dot{y}}+\frac{\partial}{\partial \mu_{x}}.$$ 

If we extend the
isotropic submanifold $\Sigma_{\Gamma, F}$ along its flow we obtain the
Lagrangian submanifold
$$
\lp x,y, \dot{x}, \dot{y}, \dot{y}+\frac{\partial f}{\partial
x}\dot{y}-\frac{\partial f}{\partial x}f, \dot{x}+\frac{\partial
f}{\partial y}\dot{y}-\frac{\partial f}{\partial y}f,
\dot{x}+y+\frac{\partial f}{\partial \dot{x}}\dot{y}-\frac{\partial
f}{\partial \dot{x}}f, x-\dot{y}+f \rp,
$$
which is the image of $dL_{2}$ with
$L_{2}=x\dot{y}-\frac{\dot{y}^{2}}{2}+f\dot{y}+\frac{\dot{x}^{2}}{2}+\dot{x}y-\frac{f^{2}}{2}$, another extension of $l$.
However, this is a regular Lagrangian since $\det\lp
\frac{\partial^{2}L_{2}}{\partial \dot{q}^{i} \partial \dot{q}^{j}}
\rp=-1-\dot{y}\frac{\partial^{2}f}{\partial
\dot{x}^{2}}+f\frac{\partial^{2}f}{\partial\dot{x}^{2}}$, which does
not vanish in a neighborhood of $\Sigma_{\Gamma, F}$. It is possible to recover $\Gamma$
by computing the corresponding Euler-Lagrange equations and
restricting them to $M$. 
}
\end{ex}

\begin{ex}[Vertical rolling disk]
{\rm 
Consider the configuration space $Q=S^{1}\times S^{1}\times \mathbb{R}^{2}$ with  coordinates $(\theta,\varphi,x,y)$, where $\theta$ denotes the angle of rotation, $\varphi$ the angle between the direction in which the disk moves and the $x$-axis and $(x,y)$ are the coordinates of the contact point. We consider the Lagrangian $L=\frac{1}{2}(\dot{\theta}^{2}+\dot{\varphi}^{2}+\dot{x}^{2}+\dot{y}^{2})$ and the constraints given by the condition of rolling without sliding are $\dot{x}=\cos(\varphi)\dot{\theta}$ and $\dot{y}=\sin(\varphi)\dot{\theta}$.

 We know that for the rolling disk the nonholonomic equations are
    $$
    \ddot{\theta}=0,\ \ddot{\varphi}=0,\
    \dot{x}=\cos(\varphi)\dot{\theta},\
    \dot{y}=\sin(\varphi)\dot{\theta},
    $$
    and the variational constrained ones are
    $$
    2\ddot{\theta}=\dot{\varphi}(-A\sin(\varphi)+B\cos(\varphi)), \
    \ddot{\varphi}=\dot{\theta}(A\sin(\varphi)-B\cos(\varphi)),
    $$
    $$
    \dot{x}=\cos(\varphi)\dot{\theta},\
    \dot{y}=\sin(\varphi)\dot{\theta},
    $$
    where $A$ and $B$ are constants (see \cite{93BlochCrouch}). Taking $A=B=0$ we see that the set of nonholonomic solutions is contained in the set of variational constrained ones. Now consider the constrained Lagrangian $l(\theta,\varphi,x,y,\dot{\theta}, \dot{\varphi})=\dot{\theta}^{2}+\frac{\dot{\varphi}^{2}}{2}$ and define $F$ as the Legendre transformation associated to the following extension $L(\theta,\varphi,x,y,\dot{\theta}, \dot{\varphi}, \dot{x}, \dot{y})=\dot{\theta}^{2}+\frac{\dot{\varphi}^{2}}{2}$, that is,
		
		$$
		\begin{array}{lccc}
		F\equiv Leg_L: & M & \longrightarrow & T^{*}Q \\
		   & (\theta,\varphi,x,y,\dot{\theta},\dot{\varphi}) & \longmapsto & (\theta,\varphi,x,y,2\dot{\theta},\dot{\varphi},0,0).
		\end{array}
		$$
As $\Gamma^{1}=\Gamma^{2}=0$, the submanifold $\Sigma_{\Gamma, F}\subset T^{*}TQ$ can be locally described by
$$
\lp \theta,\varphi,x,y,\dot{\theta},\dot{\varphi},\cos(\varphi)\dot{\theta},\sin(\varphi)\dot{\theta},0,0,0,0,2\dot{\theta},\dot{\varphi},0,0 \rp.
$$
It is isotropic and has dimension $6$, so we want to choose $2$ constraint functions on $T^{*}TQ$ satisfied by $\Sigma_{\Gamma, F}$ and extend it in the corresponding directions. First we take the constraints
$$
\phi_{1}=\dot{x}-\cos(\varphi)\dot{\theta}+\tilde{\mu}_{x}, \ \ \phi_{2}=\dot{y}-\sin(\varphi)\dot{\theta}+\tilde{\mu}_{y},
$$
with corresponding Hamiltonian vector fields
$$
X_{\phi_1}=-\frac{\partial}{\partial \tilde{\mu}_{x}}+\cos(\varphi)\frac{\partial}{\partial \tilde{\mu}_{\theta}}-\sin(\varphi)\dot{\theta}\frac{\partial}{\partial \mu_{\varphi}}+\frac{\partial}{\partial \dot{x}},
$$
$$
X_{\phi_2}=-\frac{\partial}{\partial \tilde{\mu}_{y}}+\sin(\varphi)\frac{\partial}{\partial \tilde{\mu}_{\theta}}+\cos(\varphi)\dot{\theta}\frac{\partial}{\partial \mu_{\varphi}}+\frac{\partial}{\partial \dot{y}}.
$$

Extending  $\Sigma_{\Gamma, F}$ along the flows of $X_{\phi_1}$ and $X_{\phi_2}$ we obtain the Lagrangian submanifold with local expression
$$
\lp \theta,\varphi,x,y,\dot{\theta},\dot{\varphi},\dot{x},\dot{y},0,\dot{\theta}(\cos(\varphi)\dot{y}-\sin(\varphi)\dot{x}), 0,0,\dot{\theta}+\cos(\varphi)\dot{x}+\sin(\varphi)\dot{y},\dot{\varphi},-\dot{x}+\cos(\varphi)\dot{\theta},-\dot{y}+\sin(\varphi)\dot{\theta}  \rp
$$
which is the image of $d\bar{L}$ with $\bar{L}=\frac{1}{2} (\dot{\theta}^{2}+\dot{\varphi}^{2}-\dot{x}^{2}-\dot{y}^{2}) +\dot{\theta}(\cos(\varphi)\dot{x}+\sin(\varphi)\dot{y})$. So we have obtained a regular Lagrangian whose unconstrained trajectories include the nonholonomic trajectories of the first Lagrangian. This is the same Lagrangian as the one obtained in \cite{2008BlochFer}.

If we take $\phi_{1}=\tilde{\mu}_{x}, \ \ \phi_{2}=\tilde{\mu}_{y}$ then we obtain the Lagrangian submanifold 
$$(\theta,\varphi,x,y,\dot{\theta},\dot{\varphi},\dot{x},\dot{y},0,0,0,0,2\dot{\theta},\dot{\varphi},0,0)$$ and recover the singular Lagrangian function $L=\dot{\theta}^{2}+\frac{\dot{\varphi}^{2}}{2}$.

For $\phi_{1}=\dot{x}-\cos(\varphi)\dot{\theta}, \ \ \phi_{2}=\dot{y}-\sin(\varphi)\dot{\theta}$ we get the Lagrangian submanifold 
$$\lp\theta,\varphi,x,y,\dot{\theta},\dot{\varphi},\cos(\varphi)\dot{\theta},\sin(\varphi)\dot{\theta},0,\dot{\theta}(\tilde{\mu}_{x}\sin(\varphi)-\tilde{\mu}_{y}\cos(\varphi)),0,0, \right.
$$
$$
\left.
2\dot{\theta}-\tilde{\mu}_{x}\cos(\varphi)-\tilde{\mu}_{y}\sin(\varphi),\dot{\varphi},\tilde{\mu}_{x},\tilde{\mu}_{y}\rp,$$
which coincides with $\widetilde{\hbox{Im}(\mathcal{L}_{\Gamma}F^{*}\theta_{Q})}$, for $\frac{\partial \psi^{1}}{\partial \dot{\theta}}=\cos(\varphi)$ and $\frac{\partial \psi^{2}}{\partial \dot{\theta}}=\sin(\varphi)$, where $\psi^{1}=\cos(\varphi)\dot{\theta},\ \psi^{2}=\sin(\varphi)\dot{\theta}$. Therefore, we obtain the variational constrained equations for the constrained Lagrangian $l: M\rightarrow \R$.

Now we find another immersion $F:M\longrightarrow T^{*}Q$ that makes $\Sigma_{\Gamma,F}$ isotropic. After extending it we get new Lagrangian functions defined on $TQ$.

We make the following assumptions on the dependence of coordinates of $F$ 
$$
F_{\theta}(\dot{\theta},\dot{\varphi})=F_{x}(\dot{\theta},\dot{\varphi})=F_{y}(\dot{\theta},\dot{\varphi}), \ \ F_{\varphi}(\varphi,\dot{\theta},\dot{\varphi}).
$$
Then the only constrained Helmholtz equations  (\ref{CH1}), (\ref{CH2}) and (\ref{CH3}) that do not vanish identically are
\begin{eqnarray}
\frac{\partial F_{\varphi}}{\partial\dot{\theta}}&=&\left( 1+\cos(\varphi)+\sin(\varphi) \right)\frac{\partial F_{\theta}}{\partial\dot{\varphi}},\\
0&=&\dot{\varphi}\frac{\partial^{2} F_{\varphi}}{\partial\dot{\theta}\partial\varphi}+\dot{\theta}(\cos(\varphi)-\sin(\varphi))\frac{\partial F_{x}}{\partial \dot{\theta}},\\
\frac{\partial F_{\varphi}}{\partial \varphi}&=&\frac{\partial}{\partial \dot{\varphi}}\left( \frac{\partial F_{\varphi}}{\partial\varphi}\dot{\varphi} \right)+\dot{\theta}(\cos(\varphi)-\sin(\varphi))\frac{\partial F_{x}}{\partial \dot{\varphi}},
\end{eqnarray}
and $F_{\theta}=F_{x}=F_{y}=\frac{\dot{\theta}}{\dot{\varphi}}$, $F_{\varphi}=\rho(\dot{\varphi})-\frac{\dot{\theta}^{2}}{2\dot{\varphi}^{2}}(1+\cos(\varphi)+\sin(\varphi))$ is a solution, where $\rho: \R\rightarrow \R$ is arbitrary.

Setting $\rho(\dot{\varphi})=\dot{\varphi}$, define 
$$
\begin{array}{lccc}
F: & M & \longrightarrow & T^{*}Q \\
& (\theta, \varphi, x, y, \dot{\theta},\dot{\varphi}) & \longmapsto & \left(\theta,\varphi,x,y,\frac{\dot{\theta}}{\dot{\varphi}}, \dot{\varphi}-\frac{\dot{\theta}^{2}}{2\dot{\varphi}^{2}}\left( 1+\cos(\varphi)+\sin(\varphi) \right),\frac{\dot{\theta}}{\dot{\varphi}},\frac{\dot{\theta}}{\dot{\varphi}}\right)
\end{array}
$$
to get $\Sigma_{\Gamma, F}$ given by
$$
\left( \theta, \varphi, x, y, \cos(\varphi)\dot{\theta}, \sin(\varphi)\dot{\theta}, \dot{\theta}, \dot{\varphi}, 0, \frac{1}{2}\frac{\dot{\theta}^{2}}{\dot{\varphi}}(\sin(\varphi)-\cos(\varphi)),0,0, \frac{\dot{\theta}}{\dot{\varphi}}, \dot{\varphi}-\frac{\dot{\theta}^{2}}{2\dot{\varphi}^{2}}(1+\cos(\varphi)+\sin(\varphi)), \frac{\dot{\theta}}{\dot{\varphi}}, \frac{\dot{\theta}}{\dot{\varphi}}\right),
$$
which is isotropic of dimension 6 on $(T^*TQ, \omega_{TQ})$ .

If we take $\phi_{1}=\tilde{\mu}_{x}-\frac{\dot{\theta}}{\dot{\varphi}}$ and $\phi_{2}=\tilde{\mu}_{y}-\frac{\dot{\theta}}{\dot{\varphi}}$, the corresponding Hamiltonian vector fields are
\begin{eqnarray*}
X_{\phi_1}&=& \frac{\partial}{\partial\dot{x}}+\frac{1}{\dot{\varphi}}\frac{\partial}{\partial \tilde{\mu}_{\theta}}-\frac{\dot{\theta}}{\dot{\varphi}^{2}}\frac{\partial}{\partial\tilde{\mu}_{\varphi}}, \\
X_{\phi_2}&=& \frac{\partial}{\partial\dot{y}}+\frac{1}{\dot{\varphi}}\frac{\partial}{\partial \tilde{\mu}_{\theta}}-\frac{\dot{\theta}}{\dot{\varphi}^{2}}\frac{\partial}{\partial\tilde{\mu}_{\varphi}}.
\end{eqnarray*}
Extending $\Sigma_{\Gamma,F}$ along the flows of $X_{\phi_1}$ and $X_{\phi_2}$ we get
$$
\left(
\theta, \varphi, x, y, \dot{\theta}, \dot{\varphi}, \dot{x}, \dot{y}, 0, \frac{1}{2}\frac{\dot{\theta}^{2}}{\dot{\varphi}}(\sin(\varphi)-\cos(\varphi)), 0, 0, \frac{\dot{\theta}}{\dot{\varphi}}(1-\sin(\varphi)-\cos(\varphi))+\frac{\dot{x}+\dot{y}}{\dot{\varphi}},
\right.
$$
$$
\left.
\dot{\varphi}-\frac{\dot{\theta}^{2}}{2\dot{\varphi}}(1-\sin(\varphi)-\cos(\varphi))-\frac{\dot{\theta}}{\dot{\varphi}^{2}}(\dot{x}+\dot{y}), \frac{\dot{\theta}}{\dot{\varphi}}, \frac{\dot{\theta}}{\dot{\varphi}}
\right),
$$
which is the image of $d\bar{L}$ for the singular Lagrangian
$$
\bar{L}=\frac{\dot{\varphi}^{2}}{2}+\frac{\dot{\theta}^{2}}{\dot{\varphi}}\left( \frac{1}{2}-\cos(\varphi)-\sin(\varphi) \right)+\frac{\dot{\theta}}{\dot{\varphi}}(\dot{x}+\dot{y}).
$$

Now we choose constraints $\phi_1=\dot{x}-\cos(\varphi)\dot{\theta}+\tilde{\mu}_{x}-\frac{\dot{\theta}}{\dot{\varphi}}$, $\phi_{2}=\dot{y}-\sin(\varphi)\dot{\theta}+\tilde{\mu}_{y}-\frac{\dot{\theta}}{\dot{\varphi}}$ with Hamiltonian vector fields
\begin{eqnarray*}
X_{\phi_1}&=&-\frac{\partial}{\partial \tilde{\mu}_{x}}+\left( \cos(\varphi)+\frac{1}{\dot{\varphi}} \right)\frac{\partial}{\partial\tilde{\mu}_{\theta}}-\frac{\dot{\theta}}{\dot{\varphi}^{2}}\frac{\partial}{\partial\tilde{\mu}_{\varphi}}-\dot{\theta}\sin(\varphi)\frac{\partial}{\partial \mu_{\varphi}}+\frac{\partial}{\partial\dot{x}}, \\
X_{\phi_2}&=&-\frac{\partial}{\partial \tilde{\mu}_{y}}+\left( \sin(\varphi)+\frac{1}{\dot{\varphi}} \right)\frac{\partial}{\partial\tilde{\mu}_{\theta}}-\frac{\dot{\theta}}{\dot{\varphi}^{2}}\frac{\partial}{\partial\tilde{\mu}_{\varphi}}+\dot{\theta}\cos(\varphi)\frac{\partial}{\partial \mu_{\varphi}}+\frac{\partial}{\partial\dot{y}}.
\end{eqnarray*}
Extending $\Sigma_{\Gamma, F}$ along their flows we obtain
$$
\left( \theta, \varphi, x, y, \dot{\theta}, \dot{\varphi}, \dot{x}, \dot{y}, 0, \frac{1}{2}\frac{\dot{\theta}^{2}}{\dot{\varphi}}(\sin(\varphi)-\cos(\varphi))-\dot{x}\dot{\theta}\sin(\varphi)+\dot{\theta}\dot{y}\cos(\varphi), 0, 0, \right.
$$
$$
\frac{\dot{\theta}}{\dot{\varphi}}(1-\cos(\varphi)-\sin(\varphi))+\dot{x}\cos(\varphi)-\dot{\theta}+\frac{\dot{x}+\dot{y}}{\dot{\varphi}}+\dot{y}\sin(\varphi), \dot{\varphi}-\frac{\dot{\theta}^{2}}{2\dot{\varphi}^{2}}(1-\cos(\varphi)-\sin(\varphi))-\frac{\dot{\theta}}{\dot{\varphi}^{2}}(\dot{x}+\dot{y}),
$$
$$
\left.
\frac{\dot{\theta}}{\dot{\varphi}}-\dot{x}+\cos(\varphi)\dot{\theta}, \frac{\dot{\theta}}{\dot{\varphi}}-\dot{y}+\sin(\varphi)\dot{\theta}
\right),
$$
which is the image of $d\bar{L}$ for 
$$
\bar{L}=\frac{1}{2}\left(\dot{\varphi}^{2}-\dot{\theta}^{2}-\dot{x}^{2}-\dot{y}^{2}\right)+\frac{\dot{\theta}^{2}}{2\dot{\varphi}}\left( 1-\cos(\varphi)-\sin(\varphi) \right)+\dot{\theta}\dot{x}\left( \cos(\varphi)+\frac{1}{\dot{\varphi}} \right)+\dot{\theta}\dot{y}\left( \sin(\varphi)+\frac{1}{\dot{\varphi}} \right).
$$ 
As  $\det\left(\frac{\partial^{2}L}{\partial\dot{q}^{i}\partial\dot{q}^{j}}\right)=\frac{1}{\dot{\varphi}^{5}}\left(-2\dot{\theta}^{2}(1-\sin(\varphi)-\cos(\varphi))-\dot{\theta}^{2}\dot{\varphi}+2\dot{\varphi}^{3}+\dot{\varphi}^{4}(1+\sin(\varphi)+\cos(\varphi)) \right)$, observe that this Lagrangian is regular except at a hypersurface of singular points.  

}

\end{ex}

\begin{ex}[Nonholonomic particle]
{\rm
Consider the system defined by $Q=\mathbb{R}^{3}$, $L=\frac{1}{2}(\dot{x}^{2}+\dot{y}^{2}+\dot{z}^{2})$ and constraint $\dot{z}=-x\dot{y}$. The nonholonomic SODE is given by $\Gamma^{1}=0, \Gamma^{2}=-\frac{x\dot{x}\dot{y}}{1+x^{2}}$. This SODE is  variational as a constrained system as we will see. Indeed, in  \cite{2009BlochFerMestdag} the authors show that this system can be represented as the restriction of the Euler-Lagrange vector field associated to a Lagrangian defined on the full space $TQ$. In our framework, we define the map
$$
\begin{array}{lccc}
F: & M & \longrightarrow & T^{*}Q \\
& (x,y,z,\dot{x},\dot{y}) & \longmapsto & \left( x,y,z,\dot{x}-\frac{\dot{y}^{2}}{2\dot{x}^{2}}\sqrt{1+x^{2}}\left(1+x\right), \frac{\sqrt{1+x^{2}}\dot{y}}{\dot{x}}, -\frac{\sqrt{1+x^{2}}\dot{y}}{\dot{x}} \right),
\end{array}
$$
then $\Sigma_{\Gamma,F}$ is given by
$$
\left(
x,y,z,\dot{x},\dot{y},-x\dot{y},-\frac{\dot{y}^{2}}{2\dot{x}}\frac{(1-x)}{\sqrt{1+x^{2}}},0,0,
\dot{x}-\frac{\dot{y}^{2}}{2\dot{x}^{2}}\sqrt{1+x^{2}}\left(1+x\right), \frac{\sqrt{1+x^{2}}\dot{y}}{\dot{x}}, -\frac{\sqrt{1+x^{2}}\dot{y}}{\dot{x}}
\right)
$$
and is isotropic in $(T^*TQ, \omega_{TQ})$. Also $\mathcal{L}_{\Gamma}F^{*}\theta_{Q}=dl$ for
$$
l=\frac{\dot{x}^{2}}{2}+\frac{\dot{y}^{2}}{2}\frac{\sqrt{1+x^{2}}}{\dot{x}}(1+x).
$$
Note that $l\not=\left.L\right|_{M}=\frac{1}{2}(\dot{x}^{2}+\dot{y}^{2}(1+x^{2}))$. Since $\Sigma_{\Gamma,F}\subset\Sigma_{l}$, the solutions of $\Gamma$ can be seen as constrained variational for $l$, although not for $\left.L\right|_{M}$ (see \cite{2008BlochFer}).

Now we look for a Lagrangian on $TQ$. Taking $\phi=\tilde{\mu}_{z}+\frac{\sqrt{1+x^{2}}\dot{y}}{\dot{x}}$ as constraint and extending $\Sigma_{\Gamma,F}$ along the flow of
$$
X_{\phi}=\frac{\partial}{\partial\dot{z}}-\frac{\sqrt{1+x^{2}}}{\dot{x}}\frac{\partial}{\partial\tilde{\mu}_{y}}+\frac{\sqrt{1+x^{2}}\dot{y}}{\dot{x}^{2}}\frac{\partial}{\partial \tilde{\mu}_{x}}-\frac{\dot{y}}{\dot{x}}\frac{x}{\sqrt{1+x^{2}}}\frac{\partial}{\partial \mu_{x}}
$$
we get
$$
\left( x,y,z,\dot{x},\dot{y},\dot{z}, -\frac{\dot{y}^{2}}{2\dot{x}}\frac{(1-x)}{\sqrt{1+x^{2}}}-\frac{\dot{y}}{\dot{x}}\frac{x}{\sqrt{1+x^{2}}}(\dot{z}+x\dot{y}),0,0, \dot{x}+\frac{\sqrt{1+x^{2}}}{2\dot{x}^{2}}\dot{y}^{2}(x-1),
\right.
$$
$$
\left.
\frac{\sqrt{1+x^{2}}}{\dot{x}}\dot{y}(1-x)-\frac{\sqrt{1+x^{2}}}{\dot{x}}\dot{z},-\frac{\sqrt{1+x^{2}}}{\dot{x}}\dot{y}   \right),
$$
generated by the regular Lagrangian 
$$\bar{L}=\frac{\dot{x}^{2}}{2}+\frac{(1-x)\sqrt{1+x^{2}}}{2\dot{x}}\dot{y}^{2}-\frac{\sqrt{1+x^{2}}}{\dot{x}}\dot{z}\dot{y}.$$

}

\end{ex}

\section{The inverse problem for holonomic constraints} \label{holonomic}

A particular case of constrained systems is given by a submanifold $M$ of $TQ$ which is precisely a tangent bundle of a submanifold $N$ of $Q$, this is the case of {\bf  holonomic constraints}. In other words, $M=TN$.
In many cases of interest it is useful \emph{to work extrinsically}, that is, on the manifold $Q$ instead of \emph{intrinsically}, that is, on $N$. As a result, the system on $N$ is described in terms of a system on $Q$.
Assume that $TN$ is locally described by the vanishing of the constraints
\begin{equation*}
\psi^\alpha(q^a,q^\beta)=0 \mbox{  and  } \dfrac{\partial \psi^\alpha}{\partial q^a} \dot{q}^a+ \dfrac{\partial \psi^\alpha}{\partial q^\beta} \dot{q}^\beta=0, \quad 1\leq \alpha \leq m.
\end{equation*}
For simplicity and without loss of generality, we consider the local coordinates on $Q$ adapted to $N$ and the corresponding local coordinates on $TQ$ adapted to $TN$, so that
\begin{equation*}
N=\{(q^a,q^\alpha) \in Q \; | \; q^\alpha=0\}\, , \hbox{ and } 
TN=\{(q^a,q^\alpha,\dot{q}^a,\dot{q}^\alpha) \in TQ \; | \; q^\alpha=0, \; \dot{q}^\alpha=0\}\, ,
\end{equation*}
where $a=1,\dots, n-m$. The SODE $\Gamma$ on $TN$ is locally described by $$\Gamma(q^a,\dot{q}^a)=(q^a,\dot{q}^a,\dot{q}^a,\Gamma^a(q^b,\dot{q}^b)).$$ 

The difference between holonomic dynamics and the nonholonomic one considered in Section~\ref{constraints-section} is that $M=TN$ does not project over the entire $Q$. Thus, the notion of variational SODE for constrained systems in Definition 4.1 must be adapted, because if $M$ does not project over the entire $Q$, $F\colon M \rightarrow T^*Q$ might not be an immersion.

\begin{defn}
Let $\Gamma$ be a SODE along $M$ and assume that $N=\tau_Q(M)$ is a submanifold so that we have the canonical inclusion ${\rm i}_{TN} \colon TN \rightarrow TQ$. The SODE $\Gamma$ is variational if there exists a function $F\colon M \rightarrow T^*Q$ such that the map $({\rm i}_{TN}^* \circ F)_{|M \cap TN}\colon M \cap TN \rightarrow T^*N$ is an immersion and $\Sigma_{\Gamma,F}=\hbox{Im}(\alpha_{Q}\circ TF\circ\Gamma)$ is an isotropic submanifold of $(T^*TQ,\omega_{TQ})$, where ${\rm i}_{TN}^*$ is the transpose  map of ${\rm i}_{TN}$ as defined below in~\eqref{eq:iTN*}.
\end{defn}


With this adapted notion of a variational SODE for holonomic constraints Theorem \ref{omega-constraints} can be also proved similarly as the proof in Section~\ref{constraints-section} when $M$ projects onto the entire $Q$.

Our interest now is to establish a relationship between the inverse problem without constraints when we work intrinsically on $TN$ and the inverse problem with the holonomic constraints, when we work extrinsically on $TQ$.

$$
    \xymatrix{
   T^*TN &  TT^*N \ar[l]_{\alpha_N} & TTN \ar[l]_{{\rm T}f}\ar[r]^{{\rm T}F}& TT^*Q \ar[r]^{\alpha_Q} & T^{*}TQ \\ & T^*N & TN\ar[l]^{f}  \ar[u]^{\Gamma} \ar[ull]_{\mu_{\Gamma,f}}\ar[urr]^{\mu_{\Gamma,F}}    \ar[r]_{F} & T^*Q&
    }
    $$
\begin{thm} \label{prop-holo} A SODE $\Gamma$ on $TN$ is variational for the inverse problem of the calculus of variations without constraints  if and only if it is variational along the submanifold $TN$ of $TQ$ in the inverse problem for constrained systems.
\proof 
$\Rightarrow$ If $\Gamma$ is variational for the unconstrained system on $TN$, then there exists a regular Lagrangian $l\colon TN \rightarrow \mathbb{R}$ whose solutions of the Euler-Lagrange equations are also integral curves of the SODE $\Gamma$ and vice-versa. The function $f\colon TN \rightarrow T^*N$ in the above diagram is the Legendre transformation of $l$, that is, $f(q,\dot{q})=\mbox{Leg}_{l} (q,\dot{q})=(q,\partial l/ \partial \dot{q})$. Moreover, $\hbox{Im}(\mu_{\Gamma, f})$ is a Lagrangian submanifold of $(T^*TN, \omega_N)$.

Let ${\rm i}_{TN}\colon TN \rightarrow TQ$ be the inclusion and consider an arbitrary fiber function $F: TN\rightarrow T^*Q$ such that the following diagram is commutative:
$$\xymatrix{
TN \ar[d]_{\mbox{Leg}_{l}} \ar[r]^{F}&T^*_NQ \ar[ld]^{{\rm i}_{TN}^*} \\ T^*N & 
}$$
where ${\rm i}_{TN}^*$ is the transpose map of ${\rm i}_{TN}$ defined by
\begin{equation}\label{eq:iTN*} \langle  {\rm i}_{TN}^*(p_{\tilde{q}}),v_q \rangle = \langle p_{\tilde{q}}, {\rm i}_{TN}(v_q)\rangle, \quad\end{equation}
where $ p_{\tilde{q}}\in T_N^*Q,$  $v_q\in TN$,   $(\tau_Q\circ {\rm i}_{TN})(v_q)=\pi_Q(p_{\tilde{q}})$, $q\in N,$ $\tilde{q}\in Q$, $\pi_N(q)=\tilde{q}$.

Since $l: TN\rightarrow {\mathbb R}$ is regular (that is, $\mbox{Leg}_{l}: TN\rightarrow T^*N$ is a local diffeomorphism), it is easy to deduce that  $F: TN \rightarrow T^*Q$ is an immersion. 
 In local coordinates, the function $F$ looks like
 \begin{equation*}
\begin{array}{rcl} F\colon TN & \longrightarrow & T^*Q\\
(q^a,\dot{q}^a) &\longmapsto & \left( q^a, 0, \frac{\partial l}{\partial \dot{q}^a},  F_{\alpha}(q^b, \dot{q}^b)\right)
\end{array}
\end{equation*}
 where $F_\alpha$ are  arbitrary functions on $TN$.

The local expression  in adapted coordinates of the submanifold $\hbox{Im}(\mu_{\Gamma,F})$ of $T^*TQ$ is
\begin{equation*}
 \left(q^a,0,\dot{q}^a,0; \dfrac{\partial^2 l}{\partial \dot{q}^a\partial q^b}\dot{q}^b+ \dfrac{\partial^2 l}{\partial \dot{q}^a\partial \dot{q}^b}\Gamma^b,
\dfrac{\partial F_\alpha}{\partial q^b}\dot{q}^b+ \dfrac{\partial F_\alpha}{\partial \dot{q}^b}\Gamma^b, \frac{\partial l}{\partial \dot{q}^a},F_\alpha\right)\, .
\end{equation*}
This submanifold is isotropic if $(\mu_{\Gamma,{F}})^*(\omega_{TQ})$ is equal to zero, equivalently,
\begin{equation*}
{\rm d} \left(  \dfrac{\partial^2 l}{\partial \dot{q}^a\partial q^b}\dot{q}^b+ \dfrac{\partial^2 l}{\partial \dot{q}^a\partial \dot{q}^b}\Gamma^b\right) \wedge {\rm d} q^a+{\rm d}\left(\dfrac{\partial l}{\partial \dot{q}^a}\right) \wedge {\rm d} \dot{q}^a=d^2l=0
\end{equation*}
because $\Gamma$ is the Euler-Lagrange vector field for $l: TN\rightarrow {\mathbb R}$, that is, locally 
\[
\dfrac{\rm d}{{\rm d} t} \left( \dfrac{\partial l}{\partial \dot{q}^a}\right)=\dfrac{\partial^2 l}{\partial \dot{q}^a\partial q^b}\dot{q}^b+ \dfrac{\partial^2 l}{\partial \dot{q}^a\partial \dot{q}^b}\Gamma^b=\frac{\partial l}{\partial q^a}\, .
\]

$\Leftarrow$ Assuming now that $\Gamma$ is variational for the inverse problem with constraints, then there exists $F\colon TN \rightarrow T^*Q$ such that the map $({\rm i}_{TN}^* \circ F)\colon TN \rightarrow T^*N$ is an immersion and $\hbox{Im}(\mu_{\Gamma, F})$ is isotropic in $(T^*TQ, \omega_{TQ})$. Now we find a solution of the inverse problem of the calculus of variations (without constraints) by taking $f=i_{TN}^*\circ F: TN\rightarrow T^*N$. In coordinates,  $f(q^a,\dot{q}^a)=(q^a,F_a(q^b,\dot{q}^b))$. Obviously, $\hbox{Im}(\mu_{\Gamma, f})$ is Lagrangian in $(T^*TN, \omega_{TN})$ and $f$ is a local diffeomorphism. 

This result can  be also proved intrinsically because $f$ and $F$ must make the following diagram commutative:
$$\xymatrix{
TN \ar[d]_{f} \ar[r]^{F}&T^*_NQ \ar[ld]^{{\rm i}_{TN}^*} \\ T^*N & 
}$$
Note that the diagram is commutative if $F_a=f_a$, but the remaining $F_\alpha$ are arbitrary. It can be easily proved that $f^*\theta_N=F^*\theta_Q$. Then the 2-form characterizing the inverse problem for the calculus of variations, Theorem 3.1, and the one characterizing the inverse problem for the constrained systems, Theorem 4.2, coincide. This concludes the proof.
\qed
\end{thm}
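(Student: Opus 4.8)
The plan is to route both notions through the intrinsic map $f:=\mathrm{i}_{TN}^{*}\circ F\colon TN\to T^{*}N$, and to show that for \emph{any} fiber function $F\colon TN\to T^{*}Q$ lying over such an $f$ one has the identity $F^{*}\theta_{Q}=f^{*}\theta_{N}$ on $TN$. This identity is the technical heart of the argument. In adapted coordinates $(q^{a},q^{\alpha})$ with $N=\{q^{\alpha}=0\}$ the component $q^{\alpha}\circ F$ is constant along $TN$, so $\mathrm{d}(q^{\alpha}\circ F)=0$ and the ``extra'' components $F_{\alpha}$ of $F$ never enter $F^{*}\theta_{Q}=F_{a}\,\mathrm{d}q^{a}=f^{*}\theta_{N}$. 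Combining this with the relation $\mu_{\Gamma,F}=\mathcal{L}_{\Gamma}F^{*}\theta_{Q}$ of~\eqref{useful-eq1} (which holds verbatim for maps over the inclusion $TN\hookrightarrow TQ$, as already used in Section~\ref{constraints-section}) yields $\mu_{\Gamma,F}=\mathcal{L}_{\Gamma}f^{*}\theta_{N}=\mu_{\Gamma,f}$ as $1$-forms on $TN$. Hence $\Sigma_{\Gamma,F}=\hbox{Im}(\mu_{\Gamma,F})$ is isotropic in $(T^{*}TQ,\omega_{TQ})$ if and only if $\mathrm{d}\mu_{\Gamma,f}=0$, i.e.\ if and only if $\hbox{Im}(\mu_{\Gamma,f})$ is a Lagrangian submanifold of $(T^{*}TN,\omega_{TN})$; alternatively, $\Sigma_{\Gamma,F}\subset\widetilde{\hbox{Im}(\mu_{\Gamma,f})}$ by Proposition~\ref{Prop:TildeS}, and the latter is Lagrangian exactly when $\hbox{Im}(\mu_{\Gamma,f})$ is.

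For the forward implication I would start from a regular Lagrangian $l\colon TN\to\mathbb{R}$ whose Euler--Lagrange equations are $\Gamma$, and set $f=\mathrm{Leg}_{l}$, which is a local diffeomorphism over $N$ and satisfies $\mu_{\Gamma,f}=\mathcal{L}_{\Gamma}f^{*}\theta_{N}=\mathrm{d}l$ by~\eqref{eq:Alternative}. Then pick \emph{any} fiber function $F\colon TN\to T^{*}_{N}Q$ whose $T^{*}N$-components agree with those of $f$ (the $F_{\alpha}$ being free); the triangle with $\mathrm{i}_{TN}^{*}$ commutes, $\mathrm{i}_{TN}^{*}\circ F=f$ is an immersion because $l$ is regular, and by the previous paragraph $\Sigma_{\Gamma,F}$ is isotropic since $\mathrm{d}\mu_{\Gamma,f}=\mathrm{d}^{2}l=0$. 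So $\Gamma$ is variational along $TN$ in the constrained sense.

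For the converse, suppose $F\colon TN\to T^{*}Q$ makes $\mathrm{i}_{TN}^{*}\circ F$ an immersion and $\Sigma_{\Gamma,F}$ isotropic. Put $f=\mathrm{i}_{TN}^{*}\circ F\colon TN\to T^{*}N$. Since $\dim TN=\dim T^{*}N=2(n-m)$, the immersion $f$ is a local diffeomorphism, hence a fiber diffeomorphism over $N$; and by $F^{*}\theta_{Q}=f^{*}\theta_{N}$ the isotropy of $\Sigma_{\Gamma,F}$ forces $\mathrm{d}\mu_{\Gamma,f}=0$, so $\hbox{Im}(\mu_{\Gamma,f})$ is Lagrangian in $(T^{*}TN,\omega_{TN})$. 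By Theorem~\ref{poi} applied on the manifold $N$ this says precisely that $\Gamma$ is variational for the unconstrained inverse problem on $TN$. Equivalently, one may check directly that the $2$-form $\Omega=\mathrm{d}f^{*}\theta_{N}=\mathrm{d}F^{*}\theta_{Q}$ on $TN$ satisfies at once the hypotheses of Theorem~\ref{theorem-crampin} on $TN$ and those of Theorem~\ref{omega-constraints} along $TN\subset TQ$, so the two characterizations literally coincide.

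The step I expect to be the main obstacle is making the identity $F^{*}\theta_{Q}=f^{*}\theta_{N}$, and the attendant ``the $F_{\alpha}$ are irrelevant'' claim, fully rigorous \emph{intrinsically} rather than in adapted coordinates, and, closely related, confirming that the isotropy of $\Sigma_{\Gamma,F}$ in the larger space $T^{*}TQ$ carries no information beyond the closedness of $\mu_{\Gamma,f}$ on $TN$, i.e.\ that the normal directions to $TN$ in $TQ$ contribute nothing. This is where one must use carefully that $F$ is a fiber map over $Q$ sending $TN$ into the restriction $T^{*}_{N}Q$, that $\pi_{Q}\circ\mu_{\Gamma,F}$ equals the inclusion $TN\hookrightarrow TQ$, and that $\theta_{TQ}$ is semibasic; the rest is bookkeeping once this is settled.
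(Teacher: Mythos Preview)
Your proposal is correct and follows essentially the same route as the paper. Both arguments hinge on the identity $F^{*}\theta_{Q}=f^{*}\theta_{N}$ for $f=\mathrm{i}_{TN}^{*}\circ F$, from which the equivalence of isotropy of $\Sigma_{\Gamma,F}$ in $T^{*}TQ$ and closedness of $\mu_{\Gamma,f}$ on $TN$ follows; the paper carries out the forward direction by an explicit coordinate check that the pullback of $\omega_{TQ}$ reduces to $d^{2}l=0$ and only invokes the identity $F^{*}\theta_{Q}=f^{*}\theta_{N}$ at the end as the intrinsic explanation, whereas you make that identity the organizing principle from the outset, but the substance is the same.
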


Let $\Gamma$ be a SODE on $TN$ which is the Euler-Lagrange vector field corresponding to a regular Lagrangian $l: TN\rightarrow {\mathbb R}$. Applying  Theorem \ref{prop-holo} we obtain an isotropic submanifold of $(T^*TQ, \omega_{TQ})$ by simply taking $\hbox{Im}(\mu_{\Gamma, F})$ for any  map $F: M\rightarrow T^*Q$ verifying
\[
{\rm i}_{TN}^*\circ F=\mbox{Leg}_{l},
\]
where $\mbox{Leg}_{l}: TN\rightarrow T^*N$ is the Legendre transformation associated to $l: TN\rightarrow {\mathbb R}$. 



Recall that in Section \ref{constraints-section} for the case of a submanifold projecting over the entire $Q$, we saw that a constrained variational SODE could be seen as the restriction of a variational SODE on $TQ$, Theorem \ref{extension-thm}. In order to do this we just need to find a Lagrangian submanifold projecting over the entire $TQ$ and containing $\hbox{Im}(\mu_{\Gamma,F})$ which in this case has the expression
$$
\left(q^a,0,\dot{q}^a,0; \dfrac{\partial^2 l}{\partial \dot{q}^a\partial q^b}\dot{q}^b+ \dfrac{\partial^2 l}{\partial \dot{q}^a\partial \dot{q}^b}\Gamma^b,
\dfrac{\partial F_\alpha}{\partial q^b}\dot{q}^b+ \dfrac{\partial F_\alpha}{\partial \dot{q}^b}\Gamma^b, \frac{\partial l}{\partial \dot{q}^a},F_\alpha\right).
$$

If we take a Lagrangian $L:TQ\rightarrow {\mathbb R}$ such that $L_{|TN}=l$ and verifying 
\[
\frac{\partial L}{\partial q^{\alpha}}=\frac{\partial^2 L}{\partial q^a\partial \dot{q}^{\alpha}}\dot{q}^a+\frac{\partial^2 L}{\partial \dot{q}^a\partial \dot{q}^{\alpha}}\Gamma^a 
\]
on $TN$, then we can define $F=\mbox{Leg}_{L}\big|_{TN} :TN\rightarrow T^*Q$ and get $\hbox{Im}(\mu_{\Gamma,F})\subset dL$.

For instance, in adapted coordinates to $TN$, we can take any Lagrangian $L\colon TQ\rightarrow\mathbb{R}$   of the form 
\begin{equation*}
L(q,\dot{q})=l(q^a,\dot{q}^a)+\frac{1}{2} (\dot{q}^{\alpha})^2 A_{\alpha}(q, \dot{q})+\frac{1}{2} (q^{\alpha})^2 B_{\alpha}(q, \dot{q}),
\end{equation*}
where $A_{\alpha}, B_{\alpha}\in C^{\infty}(TQ)$.
Obvioulsy
\[
F(q^a, \dot{q}^a)=(q^a, 0, \frac{\partial l}{\partial \dot{q}^a}, 0).
\]
Therefore, we conclude that the solutions of the holonomic problem given by $l$ are included in the solutions of $L$ with initial conditions given on $TN$.

\

\begin{ex}{\rm  Planar pendulum of length $h$ with a particle of mass $m$. In this case $TN=T\mathbb{S}^1$ and $TQ=T\mathbb{R}^2$. The local adapted coordinates are $(q^1,q^2)=(\theta,r-h)$. We consider the SODE $\Gamma$ on $T\mathbb{S}^1$ coming from the Lagrangian $l\colon T\mathbb{S}^1 \rightarrow \mathbb{R}$,
\begin{equation*}
l(\theta,\dot{\theta})=\dfrac{1}{2} mh^2 \dot{\theta}^2-mgh \cos\theta\, .
\end{equation*}
In this case $f(\theta,\dot{\theta})=(\theta,mh^2\dot{\theta})$ and we could take $F(\theta,\dot{\theta})=\left(\theta, 0,mh^2\dot{\theta},F_2(\theta, 0, \dot{\theta}, 0)\right)$. Proposition~\ref{prop-holo} guarantees that $\hbox{Im}(\mu_{\Gamma, F})$ is isotropic in $(T^* TQ,\omega_{TQ})$. 
A choice of Lagrangian $L\colon TQ \rightarrow \mathbb{R}$ associated with that $F$ is 
$$L=\dfrac{1}{2} mh^2 \dot{\theta}^2-mgh \cos\theta+\frac{1}{2}\dot{r}^2A(q,\dot{q})+\frac{1}{2}(r-h)^2 B(q, \dot{q})$$ and a regular one is, for instance,
$$L=l+\frac{1}{2}\dot{r}^2+B(q,\dot{q})(r-h)^2.$$}
\end{ex}

\section{Time-dependent inverse problem for constrained systems}\label{Sec:TimeConstrained}

Now let us extend time-dependent Helmholtz conditions reviewed in Section~\ref{Sec:NewTime} to constrained systems. Let $M\subset TQ$ be a submanifold projecting over the whole configuration manifold $Q$, and $\Gamma$ a SODE on $\mathbb{R}\times M$. If $(t,q^{i},\dot{q}^{a})$ denote coordinates on $\mathbb{R}\times M$, $i=1,\ldots,n=\mbox{dim}\, Q$, $a=1,\ldots,m\leq n$, then the solutions of $\Gamma$ are given by
$$\ddot{q}^a=\Gamma^{a}(t,q^{j},\dot{q}^{b}), \quad \dot{q}^{\alpha}=\psi^{\alpha}(t,q^{j},\dot{q}^{b}),
$$ 
where $\alpha=1,\ldots,n-m$. 



As in Section \ref{constraints-section} we need to introduce the notion of isotropic submanifolds but now in the Poisson context (see Section
\ref{Sec:NewTime}).  

\begin{defn}[\cite{Uchino}]
Let $(P,\left\{,\right\})$ be a Poisson manifold and denote by $\sharp:T^{*}P\longrightarrow TP$ the morphism of vector bundles induced by the Poisson bivector. Let $N\subset P$ be a submanifold. We say that it is isotropic if 
$$
\sharp(TN^{\circ})\supseteq TN\cap\mathcal{C}.
$$
Recall that $\mathcal{C}=\hbox{Im}(\sharp)$ denotes the characteristic distribution.
\end{defn}

\begin{defn}\label{Defn:VariationalTimeC}
We say that a SODE $\Gamma$ on $\mathbb{R}\times M$ is variational if there is an immersion $F:\mathbb{R}\times M\longrightarrow \mathbb{R}\times T^{*}Q$ over $\mathbb{R}\times Q$ such that $\hbox{Im}(TF \circ \Gamma)$ is an isotropic submanifold of $(T(\mathbb{R}\times T^{*}Q),\left\{ ,\right\}^T)$.
\end{defn}

$$
\xymatrix{
T(\mathbb{R}\times M) \ar[r]^/-20pt/{TF} & T(\mathbb{R}\times T^{*}Q)\cong T\mathbb{R}\times TT^{*}Q \ar[r]^/25pt/{pr_{2}} & TT^{*}Q \\
&\\
\mathbb{R}\times M \ar[uu]^{\Gamma} \ar[r]^{F} \ar[uur]_{\quad \gamma_{\Gamma,F}:=TF\circ\Gamma} & \mathbb{R}\times T^{*}Q \\
}
$$

We will now impose the isotropy condition on $\hbox{Im}(\gamma_{\Gamma,F})$ to obtain the time-dependent Helmholtz conditions for constrained systems. In local coordinates $\gamma_{\Gamma,F}$ is given by
$$
\gamma_{\Gamma,F}(t,q^{i},\dot{q}^{a})=\left( t,q^{i},F_{i},1,\dot{q}^{a},\psi^{\alpha}, \Gamma(F_{i})=\frac{\partial F_{i}}{\partial t}+\dot{q}^{a}\frac{\partial F_{i}}{\partial q^{a}}+\psi^{\alpha}\frac{\partial F_{i}}{\partial q^{\alpha}}+\Gamma^{a}\frac{\partial F_{i}}{\partial\dot{q}^{a}} \right).
$$

We also have
\begin{eqnarray*}
T(\hbox{Im}(\gamma_{\Gamma,F}))\cap\mathcal{C}&=&\hbox{span}\left\{ V_{i}:=\frac{\partial}{\partial q^{i}}+\frac{\partial F_{j}}{\partial q^{i}}\frac{\partial}{\partial p_{j}}+\frac{\partial\psi^{\alpha}}{\partial q^{i}}\frac{\partial}{\partial\dot{q}^{\alpha}}+\frac{\partial \Gamma(F_{j})}{\partial q^{i}}\frac{\partial}{\partial\dot{p}_{j}}, \right. \\
&& \left. W_{a}:=\frac{\partial}{\partial\dot{q}^{a}}+\frac{\partial F_{i}}{\partial \dot{q}^{a}}\frac{\partial}{\partial p_{i}}+\frac{\partial\psi^{\alpha}}{\partial\dot{q}^{a}}\frac{\partial}{\partial\dot{q}^{\alpha}}+\frac{\partial \Gamma(F_{i})}{\partial\dot{q}^{a}}\frac{\partial}{\partial\dot{p}_{i}}
\right\},
\end{eqnarray*}
and
\begin{eqnarray*}
\sharp(T\hbox{Im}(\gamma_{\Gamma,F})^{\circ})&=&\hbox{span}\left\{ A_{i}:= \frac{\partial}{\partial\dot{q}^{i}}+\frac{\partial F_{i}}{\partial q^{j}}\frac{\partial}{\partial\dot{p}_{j}}+\frac{\partial F_{i}}{\partial\dot{q}^{a}}\frac{\partial}{\partial p_{a}}, B_{i}:= \frac{\partial}{\partial q^{i}}+\frac{\partial \Gamma(F_{i})}{\partial q^{j}}\frac{\partial}{\partial\dot{p}_{j}}+\frac{\partial \Gamma(F_{i})}{\partial\dot{q}^{a}}\frac{\partial}{\partial p_{a}}, \right. \\
&& \left. C^{\alpha}:= -\frac{\partial}{\partial p_{\alpha}}+\frac{\partial\psi^{\alpha}}{\partial q^{j}}\frac{\partial}{\partial\dot{p}_{j}}+\frac{\partial\psi^{\alpha}}{\partial\dot{q}^{a}}\frac{\partial}{\partial p_{a}}\right\}.
\end{eqnarray*}

Then the equations we obtain by imposing $T(\hbox{Im}(\gamma_{\Gamma,F}))\cap\mathcal{C}\subset \sharp(TN^{\circ})$ are:

\begin{eqnarray}
\frac{\partial F_{a}}{\partial\dot{q}^{b}}+\frac{\partial\psi^{\alpha}}{\partial\dot{q}^{a}}\frac{\partial F_{\alpha}}{\partial\dot{q}^{b}}&=&\frac{\partial F_{b}}{\partial\dot{q}^{a}}+\frac{\partial\psi^{\alpha}}{\partial\dot{q}^{b}}\frac{\partial F_{\alpha}}{\partial\dot{q}^{a}}, \label{TC1} \\
\frac{\partial \Gamma(F_{i})}{\partial q^{k}}+\frac{\partial F_{\alpha}}{\partial q^{k}}\frac{\partial\psi^{\alpha}}{\partial q^{i}}&=&\frac{\partial \Gamma(F_{k})}{\partial q^{i}}+\frac{\partial F_{\alpha}}{\partial q^{i}}\frac{\partial\psi^{\alpha}}{\partial q^{k}}, \label{TC2}\\
\frac{\partial F_{a}}{\partial q^{i}}+\frac{\partial\psi^{\alpha}}{\partial\dot{q}^{a}}\frac{\partial F_{\alpha}}{\partial q^{i}}&=&\frac{\partial \Gamma(F_{i})}{\partial\dot{q}^{a}}+\frac{\partial F_{\alpha}}{\partial\dot{q}^{a}}\frac{\partial\psi^{\alpha}}{\partial q^{i}}. \label{TC3}
\end{eqnarray}

Equations (\ref{TC1}) and (\ref{TC3}) are obtained by imposing that $W_{a}$ be in $\sharp(T\hbox{Im}(\gamma_{\Gamma,F})^{\circ})$, while (\ref{TC2}) and (\ref{TC3}) are the conditions that arise when imposing that $V_{i}$ be in $\sharp(T\hbox{Im}(\gamma_{\Gamma,F})^{\circ})$.

%

\begin{thm}
A SODE $\Gamma$ on $\mathbb{R}\times M$ is variational if and only if there is a $2$-form $\Omega$ on $\mathbb{R}\times M$ such that
\begin{enumerate}
\item $d\Omega=0$, \label{TCi}
\item $\Omega(v_{1},v_{2})=0$, for all vertical vectors $v_{1}, v_{2}\in V(\mathbb{R}\times M)$, \label{TCii}
\item $i_{\Gamma}\Omega=0$, \label{TCiii}
\item $\left.\flat_{\Omega}\right|_{V(\mathbb{R}\times M)}$ is injective. \label{TCiv}
\end{enumerate}
\end{thm}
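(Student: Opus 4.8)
The strategy is to mirror the autonomous argument in Theorem~\ref{omega-constraints} and the time-dependent argument in Theorem~\ref{Teop}, combining them. In one direction, suppose $\Gamma$ is variational, so there is an immersion $F\colon\mathbb{R}\times M\to\mathbb{R}\times T^*Q$ over $\mathbb{R}\times Q$ with $\hbox{Im}(\gamma_{\Gamma,F})$ isotropic in $(T(\mathbb{R}\times T^*Q),\{,\}^T)$; equivalently, equations~\eqref{TC1}--\eqref{TC3} hold. Imitating the proof of Theorem~\ref{Teop}, I would set
$$
\Omega=-dF^{*}\theta_{Q}-i_{\Gamma}dF^{*}\theta_{Q}\wedge dt,
$$
now pulling back $\theta_Q$ via $F\colon\mathbb{R}\times M\to\mathbb{R}\times T^*Q$ and restricting to $\mathbb{R}\times M$. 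Properties~\ref{TCii} and~\ref{TCiii} are immediate: the first because $F^*\theta_Q$ is semibasic over $\mathbb{R}\times M$ (so its differential kills pairs of $\tau$-vertical vectors and the extra $dt$-term cannot help), the second by the same contraction identity $i_\Gamma i_\Gamma(\cdot)=0$ used in Theorem~\ref{Teop}. Property~\ref{TCiv} follows because $\flat_{dF^*\theta_Q}$ restricted to $V(\mathbb{R}\times M)$ encodes the matrix $(\partial F_i/\partial\dot{q}^a)$, which has maximal rank $m$ precisely because $F$ is an immersion over $\mathbb{R}\times Q$. The content is property~\ref{TCi}: computing $d\Omega$ in the coordinates $(t,q^i,\dot{q}^a)$ produces a $3$-form whose coefficients are exactly the left-minus-right sides of~\eqref{TC1}--\eqref{TC3} (after the identification $\Gamma(F_i)=\partial_tF_i+\dot q^a\partial_{q^a}F_i+\psi^\alpha\partial_{q^\alpha}F_i+\Gamma^a\partial_{\dot q^a}F_i$), so the isotropy of $\hbox{Im}(\gamma_{\Gamma,F})$ gives $d\Omega=0$.

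For the converse, given $\Omega$ on $\mathbb{R}\times M$ satisfying \ref{TCi}--\ref{TCiv}, I would reconstruct $F$ exactly as in the proof of Theorem~\ref{omega-constraints}: from $d\Omega=0$ write locally $\Omega=d\Theta$; from the $\tau$-vertical-annihilation property~\ref{TCii} extract a function $f$ with $\Theta-df$ semibasic over $\mathbb{R}\times M$ (vanishing on $\tau_Q$-vertical vectors, but keeping a possible $dt$-component); call this $\widetilde\Theta$. Then define $F\colon\mathbb{R}\times M\to\mathbb{R}\times T^*Q$ by the same pairing formula as before, using that $\widetilde\Theta$ is semibasic to see the definition is well posed and that $\widetilde\Theta=F^*\theta_Q$. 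Injectivity of $\flat_\Omega|_{V(\mathbb{R}\times M)}$ forces $(\partial F_i/\partial\dot q^a)$ to have maximal rank, hence $F$ is an immersion over $\mathbb{R}\times Q$. Finally one checks that $d\Omega=0$ together with $i_\Gamma\Omega=0$ translate, via $\widetilde\Theta=F^*\theta_Q$, into precisely equations~\eqref{TC1}--\eqref{TC3}, so $\hbox{Im}(\gamma_{\Gamma,F})$ is isotropic for $\{,\}^T$; this is the promised variational structure. One should cross-check consistency with Theorem~\ref{time-crampin}: the 2-form $\Omega$ we produced lives on $\mathbb{R}\times M$ and need not have maximal rank there (in the constrained case there is no maximal-rank requirement, only the injectivity condition~\ref{TCiv}), mirroring the distinction between Theorems~\ref{time-crampin} and~\ref{omega-constraints}.

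\textbf{Main obstacle.} The delicate point is bookkeeping the $dt$-directions. In the autonomous Theorem~\ref{omega-constraints} ``semibasic'' means ``kills $\tau_Q$-vertical vectors'', but here $\mathbb{R}\times M$ carries the extra time direction: $\widetilde\Theta$ should vanish on $\tau_Q$-vertical vectors yet is allowed a $dt$-component, and the reconstructed $F$ is over $\mathbb{R}\times Q$ (so it does not touch the $t$-fibre). Getting the decomposition $\Theta=\widetilde\Theta+df$ right—ensuring $\widetilde\Theta=F^*\theta_Q$ on the nose and that the $dt$-part of $\Omega$ is captured entirely by the $i_\Gamma dF^*\theta_Q\wedge dt$ term in the forward direction—is where one must be careful. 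Concretely, I expect the main work to be verifying that the coefficient of $dq^k\wedge dt$ (resp.\ $d\dot q^a\wedge dt$) in $d\Omega$ reproduces~\eqref{TC2}--\eqref{TC3}, which is the time-dependent analogue of the computation that, in Theorem~\ref{Teop}, made conditions~\eqref{time-eq} equivalent to $d\Omega=0$. Everything else is a direct transcription of the two proofs already in the paper.
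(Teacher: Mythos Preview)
Your proposal is correct and follows essentially the same route as the paper. Your forward formula $\Omega=-dF^{*}\theta_{Q}-i_{\Gamma}dF^{*}\theta_{Q}\wedge dt$ is exactly the paper's $\Omega=-dF^{*}\theta_{Q}+di_{\Gamma}F^{*}\theta_{Q}\wedge dt-\mathcal{L}_{\Gamma}F^{*}\theta_{Q}\wedge dt$ after Cartan's formula, and the converse proceeds just as you outline.

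One clarification on the point you flagged as the main obstacle: in the converse you will \emph{not} get $\widetilde\Theta=F^*\theta_Q$ on the nose. Rather, $\widetilde\Theta=F_i\,dq^i+\mu_t\,dt$ for some undetermined $\mu_t$, and the paper uses condition~\ref{TCiii} ($i_\Gamma\Omega=0$) to solve for $\partial\mu_t/\partial q^i$ and $\partial\mu_t/\partial\dot q^a$ in terms of $F$ and $\Gamma$; only then does $d\Omega=0$ reduce to \eqref{TC1}--\eqref{TC3}. So the role of~\ref{TCiii} in the converse is not merely a consistency check but the step that eliminates the extra $dt$-component, which is the refinement of your plan that you correctly anticipated would be needed.
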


\begin{proof}
We prove this result using Theorem \ref{time-crampin}.

$\Rightarrow$ If $\Gamma$ is variational in the sense given in Definition~\ref{Defn:VariationalTimeC}, then we define a $2$-form on $\mathbb{R}\times M$ by
$$
\Omega=-dF^{*}\theta_{Q}+di_{\Gamma}F^{*}\theta_{Q}\wedge dt-\mathcal{L}_{\Gamma}F^{*}\theta_{Q}\wedge dt.
$$
Condition \ref{TCii} is readily satisfied and condition \ref{TCiii} can also be checked without making use of the conditions on $F$: 
\begin{eqnarray*}
i_{\Gamma}\Omega&=&-i_{\Gamma}dF^{*}\theta_{Q}+\overbrace{i_{\Gamma}(di_{\Gamma}F^{*}\theta_{Q}\wedge dt)}^{-di_{\Gamma}F^{*}\theta_{Q}+(i_{\Gamma}di_{\Gamma}F^{*}\theta_{Q})dt}-\overbrace{i_{\Gamma}(\mathcal{L}_{\Gamma}F^{*}\theta_{Q}\wedge dt)}^{-\mathcal{L}_{\Gamma}F^{*}\theta_{Q}+i_{\Gamma}\mathcal{L}_{\Gamma}F^{*}\theta_{Q}\, dt}\\
&=&-i_{\Gamma}dF^{*}\theta_{Q}-di_{\Gamma}F^{*}\theta_{Q}+(i_{\Gamma}di_{\Gamma}F^{*}\theta_{Q})dt+i_{\Gamma}dF^{*}\theta_{Q}\\
&&+di_{\Gamma}F^{*}\theta_{Q}-i_{\Gamma}(i_{\Gamma}dF^{*}\theta_{Q}+di_{\Gamma}F^{*}\theta_{Q})  dt\\
&=&-(i_{\Gamma}i_{\Gamma}dF^{*}\theta_{Q}) dt=0.
\end{eqnarray*}

Condition \ref{TCi} is equivalent to $d(\mathcal{L}_{\Gamma}F^{*}\theta_{Q}\wedge dt)=0$, and this is guaranteed by equations (\ref{TC1}), (\ref{TC2}) and (\ref{TC3}).

Finally condition \ref{TCiv} is a consequence of $F$ being an immersion. This can be checked using local coordinates as in Theorem \ref{omega-constraints}. Now 
\begin{eqnarray*}
\Omega&=&-\frac{\partial F_{i}}{\partial q^{j}}dq^{j}\wedge dq^{i}-\frac{\partial F_{i}}{\partial \dot{q}^{a}}d\dot{q}^{a}\wedge dq^{i}\\
&& +\left( \frac{\partial F_{a}}{\partial q^{i}}\dot{q}^{a}+\frac{\partial F_{\alpha}}{\partial q^{i}}\psi^{\alpha}-\frac{\partial F_{i}}{\partial q^{a}}\dot{q}^{a}-\frac{\partial F_{i}}{\partial q^{\alpha}}\psi^{\alpha}-\frac{\partial F_{i}}{\partial \dot{q}^{a}}\Gamma^{a} \right)dq^{i}\wedge dt+\left( \frac{\partial F_{a}}{\partial\dot{q}^{b}}\dot{q}^{a}+\frac{\partial F_{\alpha}}{\partial\dot{q}^{b}}\psi^{\alpha} \right)d\dot{q}^{b}\wedge dt
\end{eqnarray*}
and therefore $i_{v_{1}}\Omega-i_{v_{2}}\Omega=-\frac{\partial F_{i}}{\partial\dot{q}^{a}}(v_{1}^{a}-v_{2}^{a})dq^{i}+\left( \frac{\partial F_{a}}{\partial\dot{q}^{b}}\dot{q}^{a}+\frac{\partial F_{\alpha}}{\partial\dot{q}^{b}}\psi^{\alpha} \right)(v_{1}^{b}-v_{2}^{b})dt$ for any $v_{1},v_{2}$ in $V(\mathbb{R}\times M)$. Since $\left( \frac{\partial F_{i}}{\partial\dot{q}^{a}} \right)$ is assumed to have maximal rank, $i_{v_{1}}\Omega=i_{v_{2}}\Omega$ implies $v_{1}=v_{2}$.

$\Leftarrow$
We proceed as in the proofs of Theorems \ref{poi} and \ref{omega-constraints} to get a local $1$-form $\tilde{\Theta}$ on $\mathbb{R}\times M$ such that $d\tilde{\Theta}=\Omega$ and $\tilde{\Theta}(v)=0$ for all vertical vector fields $v$. We define
$$
\begin{array}{lccc}
F:&\mathbb{R}\times M & \longrightarrow & \mathbb{R}\times T^{*}Q \\
& (t,v_{q}) & \longmapsto & (t,\tilde{F}(t,v_{q})) 
\end{array}
$$ 
by
$$
\langle \tilde{F}(t,v_{q}), w_{q} \rangle = \langle pr_{2}\circ\tilde{\Theta}(t,v_{q}),W_{v_q} \rangle,
$$
where $v_{q}\in M, w_{q}\in TQ, W_{v_q}\in TM$ and $\left. T\tau_{Q}\right|_{M}(W_{v_q}) =w_{q}$.
$$
\xymatrix{
T^{*}(\mathbb{R}\times M) \ar[r]^{pr_{2}} & T^{*}M \\
\mathbb{R}\times M \ar[r]^{\tilde{F}} \ar[u]^{\tilde{\Theta}} & T^{*}Q
}
$$

We check that $\hbox{Im}(\gamma_{\Gamma,F})$ is isotropic using local coordinates. As $\tilde{\Theta}$ vanishes on vertical vectors, we can write 
$$
\tilde{\Theta}=F_{i}dq^{i}+\mu_{t}dt\, .
$$
Then
\begin{eqnarray*}
\Omega=-d\tilde{\Theta}&=&-dF_{i}\wedge dq^{i}-d\mu_{t}\wedge dt \\
&=& -\frac{\partial F_{i}}{\partial q^{j}}dq^{j}\wedge dq^{i}-\frac{\partial F_{i}}{\partial \dot{q}^{a}}d\dot{q}^{a}\wedge dq^{i}-\frac{\partial F_{i}}{\partial t}dt\wedge dq^{i}-\frac{\partial \mu_{t}}{\partial q^{j}}dq^{j}\wedge dt-\frac{\partial \mu_{t}}{\partial \dot{q}^{a}}d\dot{q}^{a}\wedge dt\, .
\end{eqnarray*}
By imposing the condition $i_{\Gamma}\Omega=0$ we get
\begin{eqnarray*}
\frac{\partial \mu_{t}}{\partial \dot{q}^{a}}&=&-\frac{\partial F_{b}}{\partial \dot{q}^{a}}\dot{q}^{b}-\frac{\partial F_{\alpha}}{\partial\dot{q}^{a}}\psi^{\alpha},\\
\frac{\partial \mu_{t}}{\partial q^{i}}&=&\Gamma(F_{i})-\frac{\partial F_{a}}{\partial q^{i}}\dot{q}^{a}-\frac{\partial F_{\alpha}}{\partial q^{i}}\psi^{\alpha},
\end{eqnarray*}
so we can write
$$
\Omega=-dF_{i}\wedge dq^{i}-\left[\left( \Gamma(F_{j})-\frac{\partial F_{a}}{\partial q^{j}}\dot{q}^{a}-\frac{\partial F_{\alpha}}{\partial q^{j}}\psi^{\alpha} \right)dq^{j} + \left( -\frac{\partial F_{b}}{\partial \dot{q}^{a}}\dot{q}^{b}-\frac{\partial F_{\alpha}}{\partial\dot{q}^{a}}\psi^{\alpha} \right)d\dot{q}^{a}\right]\wedge dt,
$$
and now the closedness of the second factor gives equations (\ref{TC1}), (\ref{TC2}) and (\ref{TC3}) for $F$. 

Finally we see that F is an immersion. Condition \ref{TCiv} states that
$$
0=i_{v_{1}}\Omega-i_{v_{2}}\Omega=-\frac{\partial F_{i}}{\partial \dot{q}^{a}}(v_{1}^{a}-v_{2}^{a})dq^{i}-\left( -\frac{\partial F_{\alpha}}{\partial\dot{q}^{a}}\psi^{\alpha}-\frac{\partial F_{b}}{\partial \dot{q}^{a}}\dot{q}^{b} \right)(v_{1}^{a}-v_{2}^{a})dt
$$
is satisfied
if and only if $v_{1}=v_{2}$. Since $\frac{\partial F_{i}}{\partial \dot{q}^{a}}(v_{1}^{a}-v_{2}^{a})=0$ implies $\left( -\frac{\partial F_{\alpha}}{\partial\dot{q}^{a}}\psi^{\alpha}-\frac{\partial F_{b}}{\partial \dot{q}^{a}}\dot{q}^{b} \right)(v_{1}^{a}-v_{2}^{a})=0$, we have that $\frac{\partial F_{i}}{\partial \dot{q}^{a}}(v_{1}^{a}-v_{2}^{a})=0$ implies $i_{v_{1}}\Omega-i_{v_{2}}\Omega=0$ and $v_{1}=v_{2}$, that is, $\left( \frac{\partial F_{i}}{\partial \dot{q}^{a}} \right)$ has maximal rank and $F$ is an immersion.

\end{proof}

\section{Conclusions and future developments}

The contributions of this paper include a characterization of the inverse problem of the calculus of variations in terms of special submanifolds in symplectic geometry; precisely, Lagrangian and isotropic submanifolds. 
Our approximation is flexible enough to take into account systems of second order differential equations with constraints, in particular, nonholonomic systems and their hamiltonization. Moreover, using symplectic techniques, we can prove that if a constrained explicit second order differential equation admits a solution of the inverse problem then it can always be represented by a Lagrangian system without constraints. This last system agrees with the constrained SODE along the submanifold of $TQ$ which gives the constraints (see  Theorem \ref{extension-thm}). 
We adapt our techniques to the case of explicit time-dependent SODE's now using Poisson techniques instead of the symplectic ones. 

As we said before, one of the advantages of our approach is the easy adaptability to different cases. In particular, in future work we will study the following extensions: 
\begin{itemize}
\item The inverse problem for reduced systems; in particular, Euler-Poincar\'{e} equations and Lagrange-Poincar\'e equations. In this case, we need to work with a notion of SODE over more general spaces than tangent bundles (for instance, $TQ/G$ where $G$ is a Lie group acting free and properly on the configuration manifold). To study this problem, we will use the Lie algebroid formalism developed in \cite{2005LeMaMa}.

\item We will carefully study the relationship between our techniques and hamiltonization of nonholonomic systems. This is useful to study invariance properties of the nonholonomic flow (preservation of a volume form, symmetries...).

\item Another interesting possibility is to extend our technique, always using Lagrangian and isotropic submanifolds, now for the symplectic cotangent bundle $(T^*Q\times T^*Q, \Omega)$ where $\Omega=pr_2^*\omega_Q-pr_1^*\omega_Q$ . This case will be useful to study the inverse problem for discrete systems, that is, when a second-order difference equation can be derived as the  flow associated to the discrete Euler-Lagrange equations for a discrete Lagrangian $L_d: Q\times Q\rightarrow \R$ (see \cite{2001MaWe}). Of course, we will have a version for reduced systems using similar techniques to the ones in the previous paragraph \cite{2006MaMaMa}. 
 \end{itemize}

\appendix 

\section{Appendix} \label{Appendix}

We will establish the equivalence between the equations for $\Sigma_{\Gamma,
    F}\subset T^{*}TQ$ to be Lagrangian and the Helmholtz
    conditions for $g_{ij}=\frac{\partial F_{i}}{\partial
    \dot{q}^{j}}$.

    The equations we obtain by imposing that the submanifold
    $\Sigma_{F,\Gamma}$ be Lagrangian, that is,
    \begin{eqnarray*}
    d\left(\left(\frac{\partial F_{i}}{\partial q^{j}}\dot{q}^{j}
    +\frac{\partial F_{i}}{\partial \dot{q}^{j}}\Gamma^{j}\right)dq^{i}+F_{i}d\dot{q}^{i}\right)&=&0,\\
    \end{eqnarray*}
    are the following:
    \begin{eqnarray}
    \frac{\partial F_{i}}{\partial \dot{q}^{j}}&=&\frac{\partial F_{j}}{\partial
    \dot{q}^{i}} \, ,\label{L1} \\
    \frac{\partial^{2}F_{i}}{\partial q^{j}\partial q^{k}}\dot{q}^{k}
    +\frac{\partial^{2}F_{i}}{\partial q^{j}\partial \dot{q}^{k}}\Gamma^{k}
    +\frac{\partial F_{i}}{\partial \dot{q}^{k}}\frac{\partial \Gamma^{k}}{\partial q^{j}}
    &=&\frac{\partial^{2} F_{j}}{\partial q^{i}\partial q^{k}}\dot{q}^{k}
    +\frac{\partial^{2} F_{j}}{\partial q^{i}\partial \dot{q}^{k}}\Gamma^{k}
    +\frac{\partial F_{j}}{\partial \dot{q}^{k}}\frac{\partial \Gamma^{k}}{\partial
    q^{i}}\, , \label{L2} \\
      \frac{\partial^{2} F_{i}}{\partial \dot{q}^{j}\partial q^{k}}\dot{q}^{k}
    +\frac{\partial F_{i}}{\partial q^{j}}
    +\frac{\partial^{2}F_{i}}{\partial \dot{q}^{j}\partial \dot{q}^{k}}\Gamma^{k}
    +\frac{\partial F_{i}}{\partial \dot{q}^{k}}\frac{\partial \Gamma^{k}}{\partial \dot{q}^{j}}-  \frac{\partial F_{j}}{\partial q^{i}}&=&0. \label{L3}
    \end{eqnarray}

		Assume $F$ is a local diffeomorphism that satisfies (\ref{L1}), (\ref{L2}) and (\ref{L3}). The first three sets of Helmholtz conditions~\eqref{eq:Helmholtz1}, that is, $det(g_{ij})\not=0$, $g_{ij}=g_{ji}$ and $\frac{\partial g_{ij}}{\partial \dot{q}^{k}}=\frac{\partial g_{ik}}{\partial \dot{q}^{j}}$, are readily satisfied by $g_{ij}=\left( \frac{\partial F_{i}}{\partial \dot{q}^{j}} \right)$.
		
		Taking the difference $(\ref{L3})_{ij}-(\ref{L3})_{ji}=0$ we get $a_{ij}=a_{ji}$, where $a_{ij}=\frac{\partial F_{i}}{\partial q^{j}}+\frac{1}{2}\frac{\partial F_{i}}{\partial \dot{q}^{k}}\frac{\partial \Gamma^{k}}{\partial \dot{q}^{j}}$. Then,

\begin{eqnarray*}
    \Gamma(g_{ij})-\nabla^k_jg_{ik}-\nabla^k_i g_{kj}-(\ref{L3})&=&\frac{\partial^{2}F_{i}}{\partial q^{k}\partial
    \dot{q}^{j}}\dot{q}^{k}+\frac{\partial^{2}F_{i}}{\partial \dot{q}^{k}\partial \dot{q}^{j}}\Gamma^{k}
    +\frac{1}{2}\frac{\partial F_{i}}{\partial \dot{q}^{k}}\frac{\partial\Gamma^{k}}{\partial
    \dot{q}^{j}}+\frac{1}{2}\frac{\partial F_{k}}{\partial \dot{q}^{j}}\frac{\partial \Gamma^{k}}{\partial \dot{q}^{i}}\\
    &&-\frac{\partial^{2} F_{i}}{\partial \dot{q}^{j}\partial q^{k}}\dot{q}^{k}
    -\frac{\partial F_{i}}{\partial q^{j}}
    -\frac{\partial^{2}F_{i}}{\partial \dot{q}^{j}\partial \dot{q}^{k}}\Gamma^{k}
    -\frac{\partial F_{i}}{\partial \dot{q}^{k}}\frac{\partial \Gamma^{k}}{\partial \dot{q}^{j}}
    +\frac{\partial F_{j}}{\partial q^{i}}\\
		&=&-\frac{\partial F_{i}}{\partial q^{j}}-\frac{1}{2}\frac{\partial F_{i}}{\partial \dot{q}^{k}}\frac{\partial \Gamma^{k}}{\partial \dot{q}^{j}}+
		\frac{\partial F_{j}}{\partial q^{i}}+\frac{1}{2}\frac{\partial F_{j}}{\partial \dot{q}^{k}}\frac{\partial \Gamma^{k}}{\partial \dot{q}^{i}}=a_{ji}-a_{ij}=0,		
		\end{eqnarray*}
		Thus the $\nabla$ condition~\eqref{eq:HelmholtzNabla} is satisfied.

Now we check that the $\Phi$ condition~\eqref{eq:HelmholtzPhi} is satisfied using $(\ref{L2})$ and the condition $a_{ij}=a_{ji}$. From the latter, we have
$$
\frac{\partial F_{i}}{\partial q^{j}}=\frac{\partial F_{j}}{\partial q^{i}}+\frac{1}{2}\frac{\partial F_{j}}{\partial \dot{q}^{k}}\frac{\partial \Gamma^{k}}{\partial \dot{q}^{i}}-\frac{1}{2}\frac{\partial F_{i}}{\partial \dot{q}^{k}}\frac{\partial \Gamma^{k}}{\partial \dot{q}^{j}}\, .
$$
Substituting this on the left hand side of $(\ref{L2})$ we get
\begin{equation}\label{eq:intermediateStep}
\begin{array}{l}
\ds{g_{jl}\left[ \frac{1}{2}\frac{\partial^{2}\Gamma^{l}}{\partial q^{k}\partial \dot{q}^{i}}\dot{q}^{k}+\frac{1}{2}\frac{\partial^{2}\Gamma^{l}}{\partial\dot{q}^{k}\partial\dot{q}^{i}}\Gamma^{k}-\frac{\partial \Gamma^{l}}{\partial q^{i}} \right]+ \frac{1}{2}\frac{\partial \Gamma^{l}}{\partial \dot{q}^{i}}\left( \frac{\partial^{2}F_{j}}{\partial q^{k}\partial\dot{q}^{l}}\dot{q}^{k}+\frac{\partial^{2}F_{j}}{\partial\dot{q}^{k}\partial\dot{q}^{l}}\Gamma^{k} \right)}
\\
\ds{=g_{il}\left[ \frac{1}{2}\frac{\partial^{2}\Gamma^{l}}{\partial q^{k}\partial \dot{q}^{j}}\dot{q}^{k}+\frac{1}{2}\frac{\partial^{2}\Gamma^{l}}{\partial\dot{q}^{k}\partial \dot{q}^{j}}\Gamma^{k}-\frac{\partial \Gamma^{l}}{\partial q^{j}} \right]+\frac{1}{2}\frac{\partial \Gamma^{l}}{\partial \dot{q}^{j}}\left( \frac{\partial^{2}F_{i}}{\partial q^{k}\partial\dot{q}^{l}}\dot{q}^{k}+\frac{\partial^{2}F_{i}}{\partial\dot{q}^{k}\partial\dot{q}^{l}}\Gamma^{k} \right)\, .}
\end{array}
\end{equation}
Using $(\ref{L3})$ and $a_{ij}=a_{ji}$ again we get
\begin{equation*}
\begin{array}{l}
\ds{\frac{1}{2}\frac{\partial \Gamma^{l}}{\partial \dot{q}^{i}}\left( \frac{\partial^{2}F_{j}}{\partial q^{k}\partial\dot{q}^{l}}\dot{q}^{k}+\frac{\partial^{2}F_{j}}{\partial\dot{q}^{k}\partial\dot{q}^{l}}\Gamma^{k} \right)=\frac{1}{2}\frac{\partial \Gamma^{l}}{\partial \dot{q}^{i}}\left( \frac{\partial F_{l}}{\partial q^{j}}-\frac{\partial F_{j}}{\partial q^{l}}-\frac{\partial F_{j}}{\partial \dot{q}^{k}}\frac{\partial \Gamma^{k}}{\partial\dot{q}^{l}} \right)}
\\
\ds{=\frac{1}{2}\frac{\partial \Gamma^{l}}{\partial \dot{q}^{i}}\left(-\frac{1}{2}\frac{\partial F_{l}}{\partial\dot{q}^{k}}\frac{\partial \Gamma^{k}}{\partial\dot{q}^{j}}-\frac{1}{2}\frac{\partial F_{j}}{\partial \dot{q}^{k}}\frac{\partial \Gamma^{k}}{\partial \dot{q}^{l}} \right)=-\frac{1}{4}g_{jk}\frac{\partial \Gamma^{k}}{\partial\dot{q}^{l}}\frac{\partial \Gamma^{l}}{\partial\dot{q}^{i}}-\frac{1}{4}\frac{\partial \Gamma^{l}}{\partial\dot{q}^{i}}\frac{\partial F_{l}}{\partial\dot{q}^{k}}\frac{\partial \Gamma^{k}}{\partial\dot{q}^{j}}\, .}
\end{array}
\end{equation*}
Since the last term is equal on both sides of~\eqref{eq:intermediateStep}, that is, 
$$\frac{\partial \Gamma^{l}}{\partial\dot{q}^{i}}\frac{\partial F_{l}}{\partial\dot{q}^{k}}\frac{\partial \Gamma^{k}}{\partial\dot{q}^{j}}=\frac{\partial \Gamma^{l}}{\partial\dot{q}^{j}}\frac{\partial F_{l}}{\partial\dot{q}^{k}}\frac{\partial \Gamma^{k}}{\partial\dot{q}^{i}},$$
we obtain the $\Phi$ condition
$$
g_{jl}\Phi^{l}_{i}=g_{il}\Phi^{l}_{j}, \hbox{ where } \Phi^{l}_{j}=\frac{\partial^{2}\Gamma^{l}}{\partial q^{k}\partial
    \dot{q}^{j}}\dot{q}^{k}
    +\frac{\partial^{2}\Gamma^{l}}{\partial \dot{q}^{k}\partial
    \dot{q}^{j}}\Gamma^{k}
    -2\frac{\partial\Gamma^{l}}{\partial q^{j}}
    -\frac{1}{2}\frac{\partial \Gamma^{l}}{\partial \dot{q}^{r}}\frac{\partial \Gamma^{r}}{\partial \dot{q}^{j}}.
$$

On the other hand, if we assume that the Helmholtz conditions are satisfied by $g_{ij}$ then there exists a local diffeomorphism $F(q, \dot{q})=(q^i, F_i(q, \dot{q}))$ such that $g_{ij}=\frac{\partial F_{i}}{\partial\dot{q}^{j}}$. Then $\Omega=-d(F^{*}\Theta_{Q})$ satisfies the conditions in Theorem \ref{theorem-crampin}. According to \cite{81Crampin}, $\Omega$ is given by
$$
\Omega=g_{ij}dq^{i}\wedge \nu^{j},
$$
where $\left\{dq^{j},\nu^{j}=d\dot{q}^{j}-\frac{1}{2}\frac{\partial \Gamma^{j}}{\partial
\dot{q}^{k}}dq^{k}\right\}$ is the dual basis to $\left\{\frac{\partial}{\partial q^{i}}, H_{i}=\frac{\partial}{\partial
q^{i}}+\frac{1}{2}\frac{\partial \Gamma^{k}}{\partial
\dot{q}^{i}}\frac{\partial}{\partial \dot{q}^{k}} \right\}$ 
and $H_{i}$ is the horizontal lift of
$\frac{\partial}{\partial q^{i}}$ with respect to the connection
defined by $\Gamma$. Since
$$
\Omega=-d(F_{i}dq^{i})=-\frac{\partial F_{i}}{\partial q^{j}}dq^{j}\wedge
dq^{i}-\frac{\partial F_{i}}{\partial \dot{q}^{k}}\left(\nu^{k}
+\frac{1}{2}\frac{\partial\Gamma^{k}}{\partial \dot{q}^{j}}dq^{j} \right)\wedge dq^{i}
$$
$$
=\underbrace{\left(-\frac{\partial F_{i}}{\partial q^{j}}-\frac{1}{2}\frac{\partial F_{i}}{\partial \dot{q}^{k}}
\frac{\partial\Gamma^{k}}{\partial
\dot{q}^{j}}\right)}_{-a_{ij}}
dq^{j}\wedge dq^{i}
+\underbrace{\frac{\partial F_{i}}{\partial \dot{q}^{k}}}_{g_{ik}}dq^{i}\wedge \nu^{k},
$$
we obtain $a_{ij}=a_{ji}$ and we can reverse the calculations in the above implication.

\begin{remark}
Analogous computations can be carried out for the equations in the time-dependent case, now using the local expression for $\Omega$ in \cite{84CPT}.
\end{remark}

\section*{Acknowledgements}

This work has been partially supported by MICINN (Spain)
 MTM2008-00689, MTM2010-21186-C02-01 and MTM2010-21186-C02-02; 2009SGR1338 from the Catalan government; the European project IRSES-project “GeoMech-246981 and the ICMAT Severo Ochoa project SEV-2011-0087.
MFP has been financially supported by a FPU scholarship from MECD.

\bibliographystyle{plain}
\bibliography{References}
\end{document}